\documentclass[11pt]{amsart}
\usepackage{amsfonts}
\usepackage{graphicx}
\usepackage{amscd}

\usepackage{amsthm}
\usepackage{amstext}
\usepackage{amsmath}
\usepackage{amssymb}
\usepackage[mathscr]{eucal}
\usepackage{url}
\usepackage{verbatim}
\usepackage{mathtools}
\usepackage{booktabs}

\usepackage{tikz}


\usepackage[official]{eurosym}

\setcounter{MaxMatrixCols}{10}

\newtheorem{theorem}{Theorem}
\theoremstyle{plain}

\newtheorem{corollary}{Corollary}

\newtheorem{example}{Example}

\newtheorem{lemma}{Lemma}

\newtheorem{proposition}{Proposition}
\newtheorem{remark}{Remark}

\numberwithin{equation}{section}


\begin{document}

\title[Relationship between Lozi maps and max-difference equations]{On the relationship between Lozi maps and max-type difference equations} 
\author{A. Linero Bas and D. Nieves Rold\'an}
\begin{abstract}
In the present work we revise a transformation that links generalized Lozi maps with max-type difference equations. In this view, according to the technique of topological conjugation, we relate the dynamics of a concrete Lozi map with a complete uniparametric family of max-equations, and we apply this fact to investigate the dynamics of two particular families. Moreover, we present some numerical simulations related to the topic and, finally, we propose some open problems that look into the relationship established between generalized Lozi maps and max-equations.
\end{abstract}
\maketitle
Keywords: Lozi map, max-type difference equations, topological conjugation, stability, equilibrium point, periodic orbit, global attraction, strange attractor. \newline
\indent Mathematics Subject Classification: 39A10, 39A06, 39A33, 37E30.
 
\section{Introduction}

In 1963, the meteorologist E.N. Lorenz, \cite{Lorenz63}, studied a system of three first-order differential equations whose solutions stand out for being unstable with respect to small modifications, that is, minimal variations on the initial conditions of the system can produce a considerably different evolution of the terms. This property, which makes the trajectories wander in an apparently erratic manner, yields to what is known as a strange attractor.

The term \textit{strange attractor} was coined by Ruelle and Takens in \cite{RuelleT71} while studying the generation of turbulence\footnote{Based on \cite{RuelleT71}, we will say that a system generates turbulence if its motion becomes very complicated, irregular and chaotic.} in dissipative systems. In concrete, the analyzed system appeared to have an attractor which was locally the product of a Cantor set and a piece of two-dimensional manifold. Such structure was named by the authors as \textit{strange}. In an informal way, when an attractor is fractal, that is, its dimension is not an integer, we will called it strange attractor. In these cases, the dynamics on the attractor are said to be chaotic and they are sensitive to initial conditions (see \cite{Ott93}).

This phenomenon motivated the analysis of similar models that exhibit the same properties. For instance, in 1976, M. Hénon investigated the following two-dimensional mapping:
\begin{equation} \label{Eq_Henon}
    \left\{ \begin{matrix} x_{n+1} & = & y_n + 1 - ax_n^2 \\ y_{n+1} & = & bx_n \end{matrix}, \right.
\end{equation}

\noindent where $a, b$ are positive real numbers. In \cite{Henon76}, he simulated such model in the particular case $a=1.4$, $b=0.3$. Depending on the selection of the initial conditions, the solution of Equation (\ref{Eq_Henon}) either diverges to infinity or tends to a strange attractor, which appears to be the product of a one-dimensional manifold by a Cantor set. Later on, Benedicks and Carleson, \cite{BenedicksC91}, proved analytically the existence of a strange attractor; they considered Equation (\ref{Eq_Henon}) with $1<a<2$ and $b>0$, for a small $b$ and an $a$ close to $2$. In such case, there exists an hyperbolic fixed point\footnote{Roughly speaking, we will say that a fixed point is hyperbolic if the linearized equation at this point has no roots with absolute value equal to one. For more detail, consult \cite{Elaydi05}.} through which there are a stable and an unstable manifold. The closure of the unstable manifold is the attractor of the system.

In 1978, Lozi, \cite{Lozi}, exchanges the quadratic term in Hénon's map by considering the system of difference equation
\begin{equation} \label{Eq_Lozi_system}
    \left\{ \begin{matrix} x_{n+1} & = & 1 - a|x_n| + y_n \\ y_{n+1} &=& bx_n \end{matrix} \right.,
\end{equation}

\noindent where $a,b \in \mathbb{R}$. The numerical simulations developed for the particular case $a=1.7$ and $b=0.5$ suggested the existence of a strange attractor simpler than the one exhibited in Hénon map. In concrete, it seemed the product of parts of straight lines by a Cantor set. Lozi map gave raise to an abundant literature, as can be appreciated in the different chapters of this volume. Indeed, Lozi map was the first system for which the existence of a strange attractor was analytically established. In \cite{Misiurewicz80}, for certain values of the parameters $a$ and $b$, trapping regions\footnote{A trapping region is a nonempty set that is mapped with its closure into its interior.} were found and the author showed the hyperbolic structure of the map, which enabled him to prove that the intersection of the images of the trapping regions is a strange attractor. In fact, the restrictions that the parameters must verify in order to have such attractor are
\begin{equation}\label{E:conditions}
    0<b<1; \ \ a>b+1; \ \ 2a+b < 4; \ \ b < \frac{a^2-1}{2a+1}; \ \ \sqrt{2}a> b+ 2.
\end{equation}

Then, the attractor can be constructed from the successive forward iteration of a trapping region which is the triangle with vertices $I$, $F(I)$ and $F^2(I)$, where $F(x,y) = (1-a|x|+y, bx)$ and $I$ is the point given by the intersection of the unstable manifold of the fixed point $\left(\frac{1}{a+1-b}, \frac{b}{a+1-b}\right)$ with the horizontal axis, that is, $I= \left(\frac{2+a+\sqrt{a^2+4b}}{2(1+a-b)}, 0 \right)$.

It is worth mentioning that in \cite{BSV09} the authors even characterize the basin of attraction\footnote{The basin of attraction corresponds to the set of points of the plane whose orbits tend to the strange attractor.} for the strange attractor of the Lozi map whose parameters $a$ and $b$ verify the conditions established in (\ref{E:conditions}).

As an example of further research developed after Lozi map, we can highlight the particular case of Equation (\ref{Eq_Lozi_system}) with $a=-1$ and $b=-1$ that receives the name of \textit{Gingerbreadman equation}:
\begin{equation} \label{Eq_Devaney}
    \left\{ \begin{matrix} x_{n+1} &=& 1 + |x_n| + y_n \\ y_{n+1} &=& - x_n \end{matrix}, \right.
\end{equation}

\noindent studied by Devaney in \cite{Devaney84}. He proved that there exists a unique fixed point, $(1,1)$, which is elliptic\footnote{A fixed point is elliptic if his stability matrix has purely imaginary eigenvalues.}. Furthermore, there exists a hexagon where every point except the fixed point is periodic of period $6$. However, he showed that the fixed point is surrounded by infinitely many invariant polygons of arbitrarily large radius and the regions between any two of those consecutive polygons provide the equation of zones of instability. In this sense, Equation (\ref{Eq_Devaney}) is chaotic in certain regions and stable in others.

Moreover, in 1992, Crampin, \cite{Crampin92}, studied the piecewise linear equation 
\begin{equation} \label{Eq_Crampin}
    x_{n+1} = |x_n| - x_{n-1},
\end{equation}

\noindent which is globally periodic of period $9$.\footnote{A difference equation is called globally periodic of period $p$ when all the solutions are periodic of period (not necessarily minimal) $p$.} Also, he observed that each one of the linear difference equations, $x_{n+1} = x_n - x_{n-1}$ and $x_{n+1} = -x_n - x_{n-1}$, are periodic of periods $6$ and $3$, respectively. As a consequence he proposed the study of the combination of two periodic linear difference equations through a piecewise linear equation. In this direction, in \cite{Beardon95} Beardon et al.  considered the difference equation
\begin{equation} \label{Eq_Beardon}
    x_{n+1} = \alpha|x_n| + \beta x_n - x_{n-1},
\end{equation}

\noindent where $\alpha, \beta$ are real numbers, and studied for which values of the parameters $\alpha, \beta$, Equation (\ref{Eq_Beardon}) was globally periodic. In fact, they were able to prove that the set of points $(\alpha,\beta)$ for which the Equation (\ref{Eq_Beardon}) is globally periodic is unbounded and uncountable. For instance, they showed, \cite[Theorem 1.5.]{Beardon95}, that for $p\geq 2$, if $$\beta^2 - \alpha^2 = 2 \left(1+ \cos \frac{\pi}{p}\right), \ \ \beta < 0,$$ \noindent then Equation (\ref{Eq_Beardon}) is periodic with period $4p$. 

Furthermore, in \cite{AbuSaris99}, the authors gave necessary conditions to assure periodicity of
\begin{equation} \label{Eq_Abu_Saris}
    x_{n+1} = \alpha |x_n| + \beta x_{n-1},
\end{equation}

\noindent where $\alpha, \beta$ are real numbers. In concrete, for $\alpha \notin \{0,1\}$, it is necessary that $|\alpha| \in (0,2) \setminus \mathbb{Q}$ and $\beta =-1$, for Equation (\ref{Eq_Abu_Saris}) to be globally periodic.

In the literature, see for instance \cite{GroveL05}, Lozi map also appears in connection with a class of difference equations, so-called max-type equations. To have a general scope of the dynamics of different classes of max-type equations, consult the survey paper \cite{LineroN22}. In the present paper we are interested in deepening in the relationship between Lozi maps and some class of max-type difference equations. In this sense, in Section \ref{Sec_transformation}, we will show well-known changes of variables transforming a generalized Lozi map into max-type difference equations when some additional conditions to the parameters are considered. We will clarify the casuistic concerning the values of an arbitrary positive value $A$, fixed in advance. These transformations allow us to relate the dynamics of concrete generalized Lozi maps with that of some one-parametric families of max-type difference equations. In this direction, it is interesting to point out that all the members of this family share the same dynamics as they are topologically conjugate\footnote{A discrete dynamical system is a pair $(Y, \varphi)$, where $Y$ is a topological space and $\varphi:Y\rightarrow Y$ is continuous. In the case that $X$ is a topological space, we call associated dynamical system to the difference equation $x_{n+k} = f(x_{n+k-1}, \ldots, x_{n+1}, x_n)$ to $(X^k,F)$, where the map $F:X^k\rightarrow X^k$ is given by $F(x_1,\ldots,x_k)=(x_2,\ldots,x_k,f(x_k,\ldots,x_2,x_1))$. In this sense, we say that the dynamical systems $\varphi:X\rightarrow X$ and $\psi:Y\rightarrow Y$, where $X$ and $Y$ are topological spaces, are topologically conjugate if there is an homeomorphism $\phi: X \rightarrow Y$, such that $\phi(\varphi(x)) = \psi(\phi(x))$ for all $x \in X$. 
We say that two difference equations are topologically conjugate when the associated dynamical systems so are. Notice that, in this case, the difference equations exhibit the same type of dynamics; for instance, they have the same number of equilibrium points or periodic orbits, or have chaotic attractors which are homeomorphic,...} to the same generalized Lozi map. Afterwards, in Sections \ref{Sec_max} and \ref{S:caso_a=b}, we will apply this study in order to obtain some properties related to the dynamics of these families by interpreting them in the light of the conjugation of Lozi map and max-type equations. Then, Section \ref{Sec_simulation} collects some simulations and display some particular dynamics which are the leitmotiv to present some associated problems. Finally, we will present some conclusions in Section \ref{Sec_conclusions}.

\section{The transformation} \label{Sec_transformation}

As it can be read in the introduction of \cite{Elhadj14}, the idea of exchanging the quadratic term of Hénon map, see Equation (\ref{Eq_Henon}), into the absolute value function, came to Lozi's mind when he embedded the shape of the area generated by Hénon map, bounded by two parabolas, into another area bounded by four line segments, which reminded him to the graph of the absolute value function. Thus, in 1978, \cite{Lozi}, Lozi introduced the following system of difference equations
\begin{equation} \label{Eq_Lozi_system1}
    \left\{ \begin{matrix} x_{n+1} & = & 1 - a|x_n| + y_n \\ y_{n+1} &=& bx_n \end{matrix} \right.,
\end{equation}

\noindent where $a, b$ are positive real numbers. Such system, which can be reduced into the difference equation
\begin{equation} \label{Eq_Lozi}
    x_{n+1} = 1 - a|x_n| + bx_{n-1},
\end{equation}

\noindent was called \textit{Lozi map}. 

In the Lozi's original paper, the mapping is presented under the form of Equation~(\ref{Eq_Lozi_system1}). 
Additionally, in different papers the Lozi map is also given by
\begin{equation} \label{Eq_Lozi_system2}
    \left\{ \begin{matrix} x_{n+1} & = & 1 - a|x_n| + by_n \\ y_{n+1} &=& x_n \end{matrix}, \right.
\end{equation}
even, we can find the definition of a Lozi map as the bidimensional map 
\begin{equation} \label{Eq_Lozi_system3}
    \left\{ \begin{matrix} x_{n+1} & = & y_n \\ y_{n+1} &=&  1 - a|y_n| + x_n \end{matrix}. \right. 
\end{equation}
It is easy to establish that the three formulations are topologically conjugate, and for the sake of completeness we proceed to show the conjugations.  This observation will allow us to deal with one of them and to translate automatically its dynamics to the other ones. In our case, we will consider the system~(\ref{Eq_Lozi_system3}). 
 We denote by $ F_1,F_2,F_3$ the corresponding two-dimensional maps $F_j (x,y)$ associated to the systems (\ref{Eq_Lozi_system1}), (\ref{Eq_Lozi_system2}), (\ref{Eq_Lozi_system3}), respectively. To see that (\ref{Eq_Lozi_system1}) and (\ref{Eq_Lozi_system2}) are conjugate, consider the homeomorphism $\sigma(x,y)=(x,\frac{1}{b} y).$ Then, it is immediate to check that $F_1=\sigma^{-1}\circ F_2\circ \sigma.$ 
On the other hand, consider the homeomorphism $\tau(x,y)=(y,x).$ It is immediate to see that 
$F_3=\tau^{-1}\circ  F_2\circ \tau.$ 
Since the topological conjugation is an equivalence relation, we conclude that the three systems  (\ref{Eq_Lozi_system1}), (\ref{Eq_Lozi_system2}), (\ref{Eq_Lozi_system3}) are topologically conjugate.

A natural generalization of Equation (\ref{Eq_Lozi}) can be given by
\begin{equation} \label{Eq_gen_Lozi}
    y_{n+1} = \alpha |y_n| + \beta y_n + \gamma y_{n-1} + \delta,
\end{equation}

\noindent where $\alpha, \beta, \gamma, \delta$ are real numbers with $\alpha \neq 0$, otherwise we would obtain a linear difference equation whose dynamics is very well known. In the sequel we will refer to Equation (\ref{Eq_gen_Lozi}) as \textit{generalized Lozi map}. For instance, in the literature (see \cite{GroveL05}) we can find a version of such equation
\begin{equation} \label{Eq_Lozi_gen}
   y_{n+1} = \frac{k}{2} |y_n| + \left(\frac{k}{2}-l\right) y_n - my_{n-1} + \delta,
\end{equation}

\noindent where $k,l,m, \delta \in \mathbb{Z}$. 

Returning to Equation (\ref{Eq_gen_Lozi}), firstly, notice that in the particular case $\alpha =-a$, $\beta = 0$, $\gamma = b$ and $\delta = 1$, we recover Equation (\ref{Eq_Lozi}).

Now, we are going to show the general changes of variables which transform the generalized Lozi equation described in (\ref{Eq_gen_Lozi}) into a max-equation.

By means of a change of the form $$y_n = \log_A(z_n^q)+p$$ \noindent (where $p$ and $q$ are, in principle, arbitrarily taken real numbers with $q\neq 0$, and $A>0$ is also arbitrary, $A\neq 1$, with $z_n>0$ for all $n\geq -1$), and the observation that $|z|=\max\{z,-z\}$, Equation (\ref{Eq_gen_Lozi}) is transformed into (with $P=A^p$):
\small

\begin{equation} \label{Int}
    \log_A(z_{n+1}) = \frac{\alpha}{q} \max\{\log_A(z_n^q\cdot P), -\log_A(z_n^q\cdot P) \} + \log_A(z_n^\beta \cdot z_{n-1}^\gamma) +\frac{\beta p + \gamma p + \delta - p }{q}.
\end{equation}
\normalsize

Next, bearing in mind that the logarithm function $\log_A x$ is increasing (decreasing) when $A>1$ ($0<A<1$), we take an arbitrary $q$ holding $\frac{\alpha}{q} > 0$ ($\frac{\alpha}{q} < 0$). Then we can exchange the maximum function and the logarithm as $$\frac{\alpha}{q} \max\{ \log_A (z_n^q \cdot P), - \log_A(z_n^q \cdot P)\} = \log_A \max \left\{z_n^\alpha\cdot P^{\alpha/q}, \frac{1}{z_n^\alpha \cdot P^{\alpha/q}}\right\},$$ \noindent for $A>1$ and $\frac{\alpha}{q} > 0$; whereas 
\begin{eqnarray*}
&&\frac{\alpha}{q} \max\left\{ \log_A(z_n^q \cdot P), -\log_A (z_n^q \cdot P) \right\} \\ &=& \min \left\{ \log_A (z_n^\alpha \cdot P^{\alpha/q}, \log\left( \frac{1}{z_n^\alpha \cdot P^{\alpha/q}} \right) \right\} \\
&=& \log_A \left( \max\left \{ z_n^\alpha \cdot P^{\alpha/q}, \frac{1}{z_n^\alpha \cdot P^{\alpha/q}} \right\} \right),
\end{eqnarray*} \noindent for $0<A<1$ and  $\frac{\alpha}{q} < 0$.

We can gather the above discussion in the following result.

\begin{lemma} \label{Lemma_initial}
Let  $$y_{n+1} = \alpha |y_n| + \beta y_n + \gamma y_{n-1} + \delta$$ be a generalized Lozi map. Let $p$ and $q$ arbitrarily real numbers, and $A>0$. Then, the equation is transformed into 
\begin{equation} \label{EqI}
    \log_A (z_{n+1}) = \log_A \left( \frac{\max \left\{z_n^{2\alpha}, \frac{1}{A^{2\alpha p/q}} \right\} }{z_n^{\alpha - \beta}\cdot z_{n-1}^{-\gamma} } \right) + \frac{p(\alpha + \beta + \gamma - 1) + \delta }{q},
\end{equation}
if $A>1$ and $\frac{\alpha}{q} > 0$; or $0< A< 1$ and $\frac{\alpha}{q} < 0$.
\end{lemma}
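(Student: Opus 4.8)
The plan is to perform the substitution explicitly and then to simplify the resulting identity, essentially formalizing the computation sketched immediately before the statement. I would begin by writing $y_n = \log_A(z_n^q)+p = q\log_A z_n + p$ and substituting into the generalized Lozi map~(\ref{Eq_gen_Lozi}). Since $q\log_A z_n + p = \log_A(z_n^q P)$ with $P = A^p$, applying the identity $|w| = \max\{w,-w\}$ to $w = \log_A(z_n^q P)$ and then dividing the whole equation by $q$ produces Equation~(\ref{Int}) verbatim. This first step uses only $q\neq 0$ and $z_n>0$ and is purely formal.

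The substantive step is to pull the scalar $\alpha/q$ inside the maximum. This is where the two hypotheses enter: using $\tfrac{\alpha}{q}\log_A(z_n^q P) = \log_A(z_n^{\alpha}P^{\alpha/q})$ and $-\tfrac{\alpha}{q}\log_A(z_n^q P) = \log_A(z_n^{-\alpha}P^{-\alpha/q})$, a positive factor $\alpha/q$ keeps the maximum a maximum, whereas a negative factor turns it into a minimum; combining this with the monotonicity of $\log_A$ (increasing for $A>1$, decreasing for $0<A<1$, together with $\min\{\log_A u,\log_A v\}=\log_A\max\{u,v\}$ in the decreasing case) shows that in each of the two admissible regimes $\{A>1,\ \alpha/q>0\}$ and $\{0<A<1,\ \alpha/q<0\}$ one has
\[
\tfrac{\alpha}{q}\max\bigl\{\log_A(z_n^q P),\,-\log_A(z_n^q P)\bigr\} = \log_A\max\Bigl\{z_n^{\alpha}P^{\alpha/q},\,\tfrac{1}{z_n^{\alpha}P^{\alpha/q}}\Bigr\}.
\]
This is the only point at which the case distinction is genuinely required, so it is the conceptual core of the proof, even though it is not technically difficult.

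It then remains to rearrange. Factoring $z_n^{-\alpha}P^{\alpha/q}$ out of the maximum gives
\[
\max\Bigl\{z_n^{\alpha}P^{\alpha/q},\,\tfrac{1}{z_n^{\alpha}P^{\alpha/q}}\Bigr\} = z_n^{-\alpha}P^{\alpha/q}\,\max\Bigl\{z_n^{2\alpha},\,\tfrac{1}{P^{2\alpha/q}}\Bigr\},
\]
and, recalling $P = A^p$ so that $\log_A(P^{\alpha/q}) = \alpha p/q$ and $P^{2\alpha/q}=A^{2\alpha p/q}$, taking $\log_A$ turns the max-term of~(\ref{Int}) into $\log_A\max\{z_n^{2\alpha},A^{-2\alpha p/q}\} - \alpha\log_A z_n + \alpha p/q$. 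Adding the term $\log_A(z_n^{\beta}z_{n-1}^{\gamma}) = \beta\log_A z_n + \gamma\log_A z_{n-1}$ already present in~(\ref{Int}) and collecting the powers of $z_n$ and $z_{n-1}$ yields the denominator $z_n^{\alpha-\beta}z_{n-1}^{-\gamma}$, while collecting the constants gives $\tfrac{\alpha p}{q}+\tfrac{p(\beta+\gamma-1)+\delta}{q} = \tfrac{p(\alpha+\beta+\gamma-1)+\delta}{q}$; this is exactly Equation~(\ref{EqI}). The main obstacle is nothing deeper than bookkeeping — keeping the exponent signs straight when $\log_A$ is decreasing, and choosing precisely the factorization of the maximum that isolates $z_n^{2\alpha}$ and $A^{-2\alpha p/q}$ rather than one of the other equivalent groupings.
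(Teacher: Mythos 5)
Your proposal is correct and follows essentially the same route as the paper, which presents exactly this computation (the substitution $y_n=\log_A(z_n^qP)$, the identity $|w|=\max\{w,-w\}$, Equation~(\ref{Int}), and the two-case exchange of $\tfrac{\alpha}{q}$ with the maximum) in the discussion immediately preceding the lemma and then simply ``gathers'' it into the statement. The only difference is that you spell out the final factoring of $z_n^{-\alpha}P^{\alpha/q}$ out of the maximum, a bookkeeping step the paper leaves implicit between~(\ref{Int}) and~(\ref{EqI}).
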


\begin{remark} \label{Remark_conjugacion}
Notice that, by Lemma \ref{Lemma_initial}, Equations (\ref{Eq_gen_Lozi}) and (\ref{EqI}) are topologically conjugate. Indeed, it suffices to consider the homeomorphism $\phi: (0,\infty)^2 \rightarrow \mathbb{R}^2,$ $\phi(x,y)= (\alpha(x), \alpha(y))$, where $\alpha(u) = \log_A (u^q)+p$, thus $\alpha^{-1} (w)= A^{\frac{w-p}{q}}$. Then, if we denote by $F$ the discrete dynamical system in $\mathbb{R}^2$ given by $F(x,y)=(y, \alpha|y| + \beta y + \gamma x + \delta)$, and by $\tilde{F}$ the system defined in $(0,\infty)^2$ as $\tilde{F}(x,y)= \left(y, \frac{\max\left\{y^{2\alpha}, A^{-\frac{2\alpha p}{q}} \right\}}{y^{\alpha - \beta}\cdot x^{-\gamma}}\cdot A^{\frac{p(\alpha + \beta + \gamma - 1) + \delta }{q}} \right) $, it is easily seen that $\tilde{F}=\phi^{-1} \circ F \circ \phi$.
\end{remark}

Some consequences of Lemma \ref{Lemma_initial} are the following:

\begin{proposition}
If $\delta = 0$, then the generalized Lozi map $$y_{n+1} = \alpha |y_n| + \beta y_n + \gamma y_{n-1} $$ is topologically conjugate to the max-type equation \begin{equation} \label{prop2}
z_{n+1} = \frac{\max\{z_n^{2\alpha}, B\} }{z_n^{\alpha - \beta}\cdot z_{n-1}^{-\gamma} } \cdot B^{\frac{\alpha + \beta + \gamma - 1}{-2\alpha}} 
\end{equation} for all $B>0$.

In particular, if $\alpha + \beta + \gamma - 1 = 0$, then the generalized Lozi map is topologically conjugate to the max-type equation 
\begin{equation} \label{prop1}
    z_{n+1} = \frac{\max \{ z_n^{2\alpha}, B\} }{z_n^{\alpha - \beta} \cdot z_{n-1}^{-\gamma}}
\end{equation} 
for all $B>0$.
\end{proposition}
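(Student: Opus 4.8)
The plan is to obtain this Proposition as a direct corollary of Lemma~\ref{Lemma_initial} and Remark~\ref{Remark_conjugacion}; essentially all that is needed is a reparametrization of the constant produced by the change of variables.

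First I would specialize Lemma~\ref{Lemma_initial} to $\delta = 0$. To keep the sign hypotheses of the lemma under control I would fix once and for all a base $A > 1$ and take $q = \alpha$ (this is admissible since $\alpha \neq 0$, and then $\alpha/q = 1 > 0$, so the hypothesis ``$A>1$ and $\alpha/q>0$'' holds for every $p \in \mathbb{R}$). With these choices, Equation~(\ref{EqI}) reads
\[
\log_A (z_{n+1}) = \log_A\!\left( \frac{\max\{z_n^{2\alpha},\, A^{-2p}\}}{z_n^{\alpha-\beta}\cdot z_{n-1}^{-\gamma}} \right) + \frac{p(\alpha+\beta+\gamma-1)}{\alpha}.
\]
Applying the increasing bijection $t \mapsto A^{t}$ of $\mathbb{R}$ onto $(0,\infty)$ to both sides turns this into
\[
z_{n+1} = \frac{\max\{z_n^{2\alpha},\, A^{-2p}\}}{z_n^{\alpha-\beta}\cdot z_{n-1}^{-\gamma}}\cdot A^{\frac{p(\alpha+\beta+\gamma-1)}{\alpha}},
\]
whose right-hand side is manifestly positive, so it indeed defines a dynamical system on $(0,\infty)^2$.

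Next I would set $B := A^{-2p}$. Since $A>1$ is fixed, $p \mapsto A^{-2p}$ is a bijection of $\mathbb{R}$ onto $(0,\infty)$, so every $B>0$ is realized, namely by $p = -\tfrac12\log_A B$. Substituting $\log_A B = -2p$, that is, $p/\alpha = -\tfrac{1}{2\alpha}\log_A B$, into the constant factor yields
\[
A^{\frac{p(\alpha+\beta+\gamma-1)}{\alpha}} = \left(A^{\log_A B}\right)^{-\frac{\alpha+\beta+\gamma-1}{2\alpha}} = B^{\frac{\alpha+\beta+\gamma-1}{-2\alpha}},
\]
so the transformed equation is precisely~(\ref{prop2}). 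For the topological conjugacy I would invoke Remark~\ref{Remark_conjugacion}: for each $B>0$, with $q=\alpha$ and $p=-\tfrac12\log_A B$, the homeomorphism $\phi(x,y)=\bigl(\log_A(x^{\alpha})+p,\ \log_A(y^{\alpha})+p\bigr)$ conjugates the generalized Lozi map~(\ref{Eq_gen_Lozi}) with $\delta=0$ to~(\ref{prop2}); this proves the first assertion. The ``in particular'' statement is then immediate: if $\alpha+\beta+\gamma-1=0$ the exponent $\tfrac{\alpha+\beta+\gamma-1}{-2\alpha}$ vanishes, hence $B^{\frac{\alpha+\beta+\gamma-1}{-2\alpha}}=1$ and~(\ref{prop2}) collapses to~(\ref{prop1}).

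I do not expect a genuine obstacle here; the only point requiring care is that the auxiliary parameter $B=A^{-2\alpha p/q}$ appearing in Lemma~\ref{Lemma_initial} must sweep out all of $(0,\infty)$ while respecting the coupled sign constraints relating $A$ and $q$, which is exactly why I would pin down $A>1$ and $q=\alpha$ at the outset rather than leaving them free. (The symmetric choice $0<A<1$ with $\alpha/q<0$ works just as well and produces the same family of max-equations.)
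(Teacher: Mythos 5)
Your proposal is correct and follows essentially the same route as the paper: specialize Lemma~\ref{Lemma_initial} to $\delta=0$, reparametrize the constant $A^{-2\alpha p/q}$ as $B$ sweeping $(0,\infty)$, and read off the multiplicative factor; your choice $q=\alpha$, $A>1$ fixed is just a clean normalization of the paper's case distinctions ($p=0$ vs.\ $p\neq 0$, and the separate treatment of $\alpha+\beta+\gamma-1=0$). No gaps.
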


\begin{proof}
If $\delta = 0$ and $\alpha + \beta + \gamma - 1 \neq 0$, then (\ref{EqI}) reads as follows $$\log_A (z_{n+1}) = \log_A\left( \frac{\max\left\{z_n^{2\alpha}, \frac{1}{A^{2\alpha p/q} } \right\} }{z_n^{\alpha-\beta} \cdot z_{n-1}^{-\gamma} } \right) + \frac{p(\alpha + \beta + \gamma - 1) }{q}.$$

If we take $p=0$ in the above equation, we find $$z_{n+1} = \frac{\max\{ z_n^{2\alpha}, 1\} }{z_n^{\alpha-\beta} \cdot z_{n-1}^{-\gamma} }. $$ 

If, otherwise, $p\neq 0$, the equation presents this writing $$z_{n+1} = \frac{\max\left\{z_n^{2\alpha}, \frac{1}{A^{\frac{2\alpha p}{q}}} \right\} }{z_n^{\alpha - \beta} \cdot z_{n-1}^{-\gamma} } \cdot A^{\frac{p(\alpha + \beta + \gamma - 1)}{q} } = \frac{\max\left\{z_n^{2\alpha}, A^{\frac{-2\alpha p}{q}} \right\} }{z_n^{\alpha - \beta} \cdot z_{n-1}^{-\gamma} } \cdot A^{ \left(\frac{-2\alpha p}{q}\right) \cdot \frac{\alpha + \beta + \gamma - 1}{-2\alpha} }.$$

When $A>1$ and $\frac{\alpha}{q} > 0$, since $p$ is arbitrarily taken, then once fixed the value $q$, having the same sign as $\alpha$, we have that $A^{-\frac{2\alpha p}{q}} \in (0, \infty)$. Therefore, if we write $B= A^{-\frac{2\alpha p}{q}}$, we obtain (\ref{prop2}).
 We can proceed analogously for $0<A<1$ and $\frac{\alpha}{q} < 0$.

Moreover, if $\alpha + \beta + \gamma - 1 = 0$, we can choose the value $p$ \textit{ad libitum}, and we have 
$$z_{n+1} = \max \left\{ z_n^{2\alpha}, \frac{1}{A^{2\alpha p/q} } \right\} \cdot z_n^{\beta - \alpha} \cdot z_{n-1}^\gamma  = \frac{\max \left\{ z_n^{2\alpha}, \left(\frac{1}{A}\right)^{\frac{2\alpha p}{q} } \right\}}{z_n^{\alpha - \beta}\cdot z_{n-1}^{-\gamma}}.$$

Since the choice of $p$ is arbitrary, if $A>1$ and $\frac{\alpha}{q} > 0$, we know that the range of $\left(\frac{1}{A}\right)^{\frac{2\alpha p}{q}}$ is $(0,\infty)$, and we obtain $$z_{n+1} = \frac{\max \{ z_n^{2\alpha}, B\} }{z_n^{\alpha - \beta} \cdot z_{n-1}^{-\gamma}}$$ for all $B>0$. The same applies if $0<A<1$ and $\frac{\alpha}{q} < 0$.
\end{proof}

\begin{example}In particular, $y_{n+1} = \frac{3}{2}|y_n| + \frac{1}{2}y_n - y_{n-1}$ is topologically conjugate to any max-type equation $$z_{n+1} = \frac{\max\{ z_n^3, B\} }{z_n \cdot z_{n-1} }, \ \ \ B>0.$$

On the other hand, $y_{n+1} = |y_n| + y_n + y_{n-1}$ is topologically conjugate to any max-type equation $$z_{n+1} = \frac{1}{B} \max\{z_n^2, B\} \cdot z_{n-1}, \ \ \ B>0.$$ $\hfill\Box$
\end{example}

\begin{proposition} \label{Prop_deltaneq0}
Let us consider 
\begin{equation} \label{Eq_Lozi_prop3}
    y_{n+1} = \alpha |y_n| + \beta y_n + \gamma y_{n-1} + \delta,
\end{equation}
with $\delta \neq 0$. Then, either for $A>1$ and $\frac{\alpha}{q}>0$, or for $0<A<1$ and $\frac{\alpha}{q}<0$, with $q\in \mathbb{R} \setminus \{0\}$, Equation (\ref{Eq_Lozi_prop3}) is topologically conjugate to $$z_{n+1} = \frac{\max\{z_n^{2\alpha}, A^{\frac{-2\alpha p}{q} } \} }{z_n^{\alpha - \beta} \cdot z_{n-1}^{-\gamma} } \cdot A^{\frac{p(\alpha+\beta + \gamma -1) + \delta }{q}},$$ for all $p\in \mathbb{R}$. In particular, assuming that $\alpha+\beta+\gamma-1\neq 0$, Equation (\ref{Eq_Lozi_prop3}) is topologically conjugate to:
\begin{itemize}
    \item $z_{n+1} = \frac{\max\{z_n^{2\alpha}, B \} }{z_n^{\alpha - \beta} \cdot z_{n-1}^{-\gamma} }$, for all $B>1$, if $\frac{\delta}{\alpha + \beta + \gamma -1}>0$. 
    \item $z_{n+1} = \frac{\max\{z_n^{2\alpha}, C \} }{z_n^{\alpha - \beta} \cdot z_{n-1}^{-\gamma} }$, for all $0<C<1$, if $\frac{\delta}{\alpha + \beta + \gamma -1}<0$.
\end{itemize}
Moreover, if $\alpha + \beta + \gamma -1 = 0$, additionally we get that Equation (\ref{Eq_Lozi_prop3}) is topologically conjugate to:
\begin{itemize}
    \item $z_{n+1} = \frac{\max\{z_n^{2\alpha}, 1 \} }{z_n^{\alpha - \beta} \cdot z_{n-1}^{-\gamma}}\cdot B$, for all $B>1$, if $\frac{\delta}{\alpha}>0$. 
    \item $z_{n+1} = \frac{\max\{z_n^{2\alpha}, 1 \} }{z_n^{\alpha - \beta} \cdot z_{n-1}^{-\gamma}} \cdot C$, for all $0<C<1$, if $\frac{\delta}{\alpha}<0$.
\end{itemize}
\end{proposition}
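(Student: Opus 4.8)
The plan is to derive every assertion from \lemref{Lemma_initial} together with Remark~\ref{Remark_conjugacion}, and then to exploit the freedom in the parameters $p$, $q$ and in the regime for $A$. Applying the function $A^{(\cdot)}$ to both sides of Equation~(\ref{EqI}) and transporting the dynamics along the conjugating homeomorphism $\phi$ of Remark~\ref{Remark_conjugacion}, Equation~(\ref{Eq_Lozi_prop3}) is topologically conjugate --- whenever $A>1$ and $\alpha/q>0$, or $0<A<1$ and $\alpha/q<0$ --- to
\[
z_{n+1} = \frac{\max\{z_n^{2\alpha},\, A^{-2\alpha p/q}\}}{z_n^{\alpha-\beta}\cdot z_{n-1}^{-\gamma}}\cdot A^{\frac{p(\alpha+\beta+\gamma-1)+\delta}{q}},
\]
for every $p\in\mathbb{R}$; this is the first displayed equation in the statement. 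What remains is to choose $p$ (and then let $q$, with the prescribed sign, range over $\mathbb{R}\setminus\{0\}$) so as to normalize either the argument $A^{-2\alpha p/q}$ of the maximum or the outer factor $A^{(p(\alpha+\beta+\gamma-1)+\delta)/q}$.

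For the subcase $\alpha+\beta+\gamma-1\neq 0$, I would take $p=-\delta/(\alpha+\beta+\gamma-1)$, which makes the exponent $p(\alpha+\beta+\gamma-1)+\delta$ equal to zero and reduces the conjugate equation to $z_{n+1}=\max\{z_n^{2\alpha},B\}/(z_n^{\alpha-\beta}z_{n-1}^{-\gamma})$ with $B=A^{-2\alpha p/q}=A^{2\alpha\delta/(q(\alpha+\beta+\gamma-1))}$. Since $\log_A B = 2(\alpha/q)\cdot\delta/(\alpha+\beta+\gamma-1)$ and $\alpha/q$ runs over all of $(0,\infty)$ in the first regime and over all of $(-\infty,0)$ in the second, a short case check --- remembering that $\log_A$ is increasing for $A>1$ and decreasing for $0<A<1$, so that $x\mapsto A^x$ maps $(0,\infty)$ onto $(1,\infty)$ in the first case and onto $(0,1)$ in the second --- shows that $B$ sweeps exactly $(1,\infty)$ when $\delta/(\alpha+\beta+\gamma-1)>0$ and exactly $(0,1)$ when $\delta/(\alpha+\beta+\gamma-1)<0$, in both regimes. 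This gives the two bulleted items of the ``in particular'' part.

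For the subcase $\alpha+\beta+\gamma-1=0$ the outer exponent collapses to $\delta/q$, which is independent of $p$; so instead I would normalize the argument of the maximum by taking $p=0$, giving $A^{-2\alpha p/q}=1$ and the conjugate equation $z_{n+1}=\max\{z_n^{2\alpha},1\}/(z_n^{\alpha-\beta}z_{n-1}^{-\gamma})\cdot A^{\delta/q}$. Writing $\log_A(A^{\delta/q})=(\delta/\alpha)(\alpha/q)$ and running the same sign analysis as above yields that the outer factor $A^{\delta/q}$ sweeps $(1,\infty)$ when $\delta/\alpha>0$ and $(0,1)$ when $\delta/\alpha<0$, which is the ``moreover'' part.

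The only genuinely delicate point I anticipate is the bookkeeping of signs: one must verify that in each of the two admissible regimes the monotonicity of $\log_A$ exactly compensates for the sign of $\alpha/q$ (and is insensitive to the sign of $\alpha$ itself), so that the stated ranges $(1,\infty)$ and $(0,1)$ are obtained consistently. Apart from that, the argument is a direct substitution into Equation~(\ref{EqI}) followed by exponentiation.
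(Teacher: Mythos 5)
Your proposal is correct and follows essentially the same route as the paper: invoke Lemma~\ref{Lemma_initial}, choose $p=-\delta/(\alpha+\beta+\gamma-1)$ (respectively $p=0$) to kill the outer factor (respectively the constant inside the maximum), and then let the remaining free parameters sweep out the stated ranges. Your explicit sign bookkeeping via $\log_A B = 2(\alpha/q)\cdot\delta/(\alpha+\beta+\gamma-1)$ is just a more detailed rendering of the case check the paper leaves implicit.
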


\begin{proof}
Firstly, by Lemma \ref{Lemma_initial}, it follows directly that Equation (\ref{Eq_Lozi_prop3}) is topologically conjugate to 
\begin{equation} \label{Eq_proof_prop3}
z_{n+1} = \frac{\max\{z_n^{2\alpha}, A^{\frac{-2\alpha p}{q} } \} }{z_n^{\alpha - \beta} \cdot z_{n-1}^{-\gamma} } \cdot A^{\frac{p(\alpha+\beta + \gamma -1) + \delta }{q}},  
\end{equation}
for $A>1$ and $\frac{\alpha}{q}>0$, or for $0<A<1$ and $\frac{\alpha}{q}<0$, and for all $p\in \mathbb{R}$.

Now, if $\alpha+\beta+\gamma-1\neq 0$, by taking $p = \frac{-\delta}{\alpha + \beta + \gamma - 1}$, (\ref{Eq_proof_prop3}) reduces to $$z_{n+1} = \frac{\max\left\{z_n^{2\alpha}, A^{\frac{2\alpha \delta}{q(\alpha + \beta + \gamma - 1)}} \right\} }{z_n^{\alpha - \beta} \cdot z_{n-1}^{-\gamma} }.$$

Now, depending on the sign of $\frac{\delta}{\alpha + \beta + \gamma - 1}$ and taking into account if $A>1$ or $0<A<1$, we deduce that the corresponding generalized Lozi map is topologically conjugate to:
\begin{itemize}
    \item The equations $z_{n+1} = \frac{\max\left\{z_n^{2\alpha}, B \right\} }{z_n^{\alpha - \beta} \cdot z_{n-1}^{-\gamma}}$ for all $B>1$ when $\frac{\delta}{\alpha +\beta + \gamma - 1} > 0$.
    \item The equations $z_{n+1} = \frac{\max\left\{z_n^{2\alpha}, C \right\} }{z_n^{\alpha - \beta} \cdot z_{n-1}^{-\gamma}}$ for all $0<C<1$ when $\frac{\delta}{\alpha +\beta + \gamma - 1} < 0$.
\end{itemize}

Finally, if $\alpha + \beta + \gamma -1 = 0$, then, Equation (\ref{Eq_proof_prop3}) is converted into $$z_{n+1} = \frac{\max\left\{ z_n^{2\alpha}, A^{-\frac{2\alpha p}{q} } \right\} }{z_n^{\alpha - \beta}\cdot z_{n-1}^{-\gamma}  } A^{\delta/q}. $$

So, if we take $p=0$, we arrive to the difference equation $$z_{n+1} = \frac{\max\{z_n^{2\alpha}, 1 \} }{z_n^{\alpha - \beta} \cdot z_{n-1}^{-\gamma} } \cdot A^{\delta/q}.$$

Since $A$ is arbitrary, depending on the interval where $A$ belongs and the corresponding sign of $q$, we find:
\begin{itemize}
    \item The equations $z_{n+1} = \frac{\max\{z_n^{2\alpha}, 1 \} }{z_n^{\alpha - \beta} \cdot z_{n-1}^{-\gamma} } \cdot B$ for all $B>1$ when $\frac{\delta}{\alpha} > 0$.
    \item The equations $z_{n+1} = \frac{\max\{z_n^{2\alpha}, 1 \} }{z_n^{\alpha - \beta} \cdot z_{n-1}^{-\gamma} } \cdot C$ for all $0<C<1$ when $\frac{\delta}{\alpha} < 0$.
\end{itemize}

\end{proof}

\begin{example}
For instance, $y_{n+1} = \frac{1}{2}|y_n| - \frac{1}{2}y_n - 2y_{n-1} + 1$ is topologically conjugate to $$z_{n+1} = \frac{\max\{z_n,1\} }{z_n \cdot z_{n-1}^2}\cdot B,$$ \noindent for all $B>1$. $\hfill\Box$
\end{example}

Finally, in order to show some conditions that allow us to connect Equation (\ref{Eq_gen_Lozi}) with max-type difference equations, we recover Equation (\ref{Eq_Lozi_gen}) introduced at the beginning of the section, that is, $$ y_{n+1} = \frac{k}{2} |y_n| + \left(\frac{k}{2}-l\right) y_n - my_{n-1} + \delta.$$

Obviously, the two formulations of Equation (\ref{Eq_gen_Lozi}) and Equation (\ref{Eq_Lozi_gen}) are equivalent, since it suffices to take into account that the equalities $\frac{k}{2} = \alpha$, $\frac{k}{2}-l = \beta$, $-m = \gamma$ and $\delta = \delta$ define a bijective relation between the coefficients of (\ref{Eq_gen_Lozi}) and (\ref{Eq_Lozi_gen}). In this sense, we find:
\begin{itemize}
    \item $\delta = k - 1 - l - m \Leftrightarrow \delta = \alpha + \beta + \gamma - 1$;
    \item $\delta = -(k - 1 - l - m) \Leftrightarrow \delta = -(\alpha + \beta + \gamma) + 1$;
    \item $\delta = 0$.
\end{itemize}

It should be emphasized that the above conditions $\delta = k - 1 - l - m$, $\delta = -(k - 1 - l - m)$ and $\delta = 0$ appear in \cite{GroveL05} when applying the change of variables to Equation (\ref{Eq_Lozi_gen}) in order to obtain a max-type difference equation, which is
\begin{equation} \label{MaxLozi}
    x_{n+1} = \frac{\max\{x_n^k, A \}}{x_n^l \cdot x_{n-1}^m}.
\end{equation}
In the light of our study, we can establish:
\begin{corollary}(\cite{GroveL05})
Let $k,l,m,\delta\in\mathbb R$. Then, the generalized Lozi map $ y_{n+1} = \frac{k}{2} |y_n| + \left(\frac{k}{2}-l\right) y_n - my_{n-1} + \delta$ is topologically conjugate to:
\begin{itemize}
\item $x_{n+1}=\frac{\max\{x_n^k, A\}}{x_n^l\cdot x_{n-1}^m}$ for all $A>0$, if $\delta=0$; 
\item $x_{n+1}=\frac{\max\{x_n^k, A\}}{x_n^l\cdot x_{n-1}^m}$ for all $A>1$, if $\delta=k - 1 - l - m$;
\item $x_{n+1}=\frac{\max\{x_n^k, A\}}{x_n^l\cdot x_{n-1}^m}$ for all $0<A<1$, if $\delta=-(k - 1 - l - m)$.
\end{itemize}
\end{corollary}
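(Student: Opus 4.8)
The plan is to apply Proposition~\ref{Prop_deltaneq0} together with the earlier Proposition (for the case $\delta = 0$) under the dictionary $\alpha = k/2$, $\beta = k/2 - l$, $\gamma = -m$, since these identifications are exactly the bijective change of coefficients noted just before the statement. The first observation is that under this dictionary one has $2\alpha = k$, $\alpha - \beta = l$, $-\gamma = m$, and, crucially, $\alpha + \beta + \gamma - 1 = k - 1 - l - m$. Thus the max-type equation appearing throughout, namely $z_{n+1} = \frac{\max\{z_n^{2\alpha}, \,\cdot\,\}}{z_n^{\alpha-\beta}\cdot z_{n-1}^{-\gamma}}$, is literally $x_{n+1} = \frac{\max\{x_n^k, \,\cdot\,\}}{x_n^l \cdot x_{n-1}^m}$, so the three bullet points of the corollary will follow by reading off the corresponding statements already proved, once the sign conditions are matched.

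Next I would dispatch the three cases in turn. For $\delta = 0$: this is immediate from the first Proposition of the section, whose conclusion is precisely that the generalized Lozi map is topologically conjugate to $z_{n+1} = \frac{\max\{z_n^{2\alpha}, B\}}{z_n^{\alpha-\beta}\cdot z_{n-1}^{-\gamma}} \cdot B^{(\alpha+\beta+\gamma-1)/(-2\alpha)}$ for all $B > 0$ — but here one must note that the stated corollary asserts conjugacy to the cleaner equation without the trailing factor. I would handle this by observing that the subcase $\alpha+\beta+\gamma-1 = 0$ of that Proposition gives exactly $x_{n+1} = \frac{\max\{x_n^k, A\}}{x_n^l\cdot x_{n-1}^m}$; and in fact when $\delta = 0$ one may simply take $p = 0$ in Lemma~\ref{Lemma_initial}, which kills the additive constant regardless of the value of $\alpha+\beta+\gamma-1$, yielding $z_{n+1} = \frac{\max\{z_n^{2\alpha}, 1\}}{z_n^{\alpha-\beta}\cdot z_{n-1}^{-\gamma}}$, and then letting $p$ range freely (with the sign of $q$ chosen to match the sign of $\alpha$, exactly as in the proof of the Proposition) sweeps the bracket constant through all of $(0,\infty)$, giving the full family $A > 0$.

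For $\delta = k - 1 - l - m$, i.e. $\delta = \alpha+\beta+\gamma-1 \neq 0$ when this quantity is nonzero: then $\frac{\delta}{\alpha+\beta+\gamma-1} = 1 > 0$, so the first bullet of Proposition~\ref{Prop_deltaneq0} applies and gives conjugacy to $x_{n+1} = \frac{\max\{x_n^k, A\}}{x_n^l\cdot x_{n-1}^m}$ for all $A > 1$. The degenerate subcase $\alpha+\beta+\gamma-1 = 0$ forces $\delta = 0$, which is already covered. Symmetrically, for $\delta = -(k-1-l-m)$ one has $\frac{\delta}{\alpha+\beta+\gamma-1} = -1 < 0$, and the second bullet of Proposition~\ref{Prop_deltaneq0} yields the family $0 < A < 1$. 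I do not expect a genuine obstacle here: the entire content is bookkeeping, matching the coefficient dictionary and the sign of a ratio to the pre-established branches. The only mild subtlety to be careful about is the $\delta = 0$ bullet, where the corollary claims the clean equation for \emph{all} $A > 0$ while the $\delta = 0$ Proposition as literally stated carries an extra factor unless $\alpha+\beta+\gamma-1 = 0$; the resolution, as indicated above, is to invoke Lemma~\ref{Lemma_initial} directly with $p = 0$ and then vary $p$, rather than to quote the Proposition verbatim. With that caveat noted, the proof is a one-paragraph appeal to the preceding results.
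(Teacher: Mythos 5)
Your overall strategy --- translating via $\alpha=k/2$, $\beta=k/2-l$, $\gamma=-m$, so that $2\alpha=k$, $\alpha-\beta=l$, $-\gamma=m$ and $\alpha+\beta+\gamma-1=k-1-l-m$, and then quoting Proposition~\ref{Prop_deltaneq0} for the second and third bullets --- is exactly what the paper intends (it states the corollary with no argument beyond ``in the light of our study''), and your treatment of those two bullets, including the degenerate subcase $k-1-l-m=0$, is correct.

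The gap is in your patch for the first bullet. You cannot simultaneously ``take $p=0$'' to kill the additive constant and ``let $p$ range freely'' to sweep the bracket constant: in (\ref{EqI}) with $\delta=0$ the bracket constant is $A^{-2\alpha p/q}$ while the additive term is $p(\alpha+\beta+\gamma-1)/q$, so when $\alpha+\beta+\gamma-1\neq 0$ every choice of $p\neq 0$ that moves the bracket constant away from $1$ reinstates the multiplicative factor $A^{p(\alpha+\beta+\gamma-1)/q}\neq 1$; you only ever recover equation (\ref{prop2}), never the clean equation (\ref{prop1}) with $A\neq 1$. Nor can a cleverer choice of $(A,p,q)$ repair this: the first bullet, read literally for all $A>0$, fails whenever $k-1-l-m\neq 0$. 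For instance, with $k=2$, $l=m=0$, $\delta=0$ the generalized Lozi map is $y_{n+1}=|y_n|+y_n$, which has exactly one equilibrium, whereas $x_{n+1}=\max\{x_n^2,A\}$ has no equilibrium for $A>1$ and two for $0<A<1$; since conjugacy preserves the number of fixed points, conjugacy to the clean equation can hold only for $A=1$. The conclusion actually available from the paper's results is therefore: for $\delta=0$ the map is conjugate to (\ref{prop2}) for all $B>0$ (trailing factor $B^{(k-1-l-m)/(-k)}$ included), and to the clean family for all $A>0$ only under the additional hypothesis $k-1-l-m=0$, exactly as the ``in particular'' clause of the first Proposition records. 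You were right to flag the subtlety, but the resolution you propose does not close it; the bullet needs either the extra hypothesis or the trailing factor to be true.
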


The transformation that links Equation (\ref{Eq_Lozi_gen}) with Equation (\ref{MaxLozi}) can be found in \cite{GroveL05} or \cite{Feuer03}. However, we have made a more general change of variables and the parameters are not restricted to the integer set $\mathbb Z$  as they are in the references cited, but they can be arbitrary real numbers. Furthermore, as far as we are concerned, it is the first time that the problem is treated from the point of view of topological conjugacies.

On the other hand, recall that we recover Lozi Equation (\ref{Eq_Lozi}) from the generalized Lozi equation (\ref{Eq_gen_Lozi}) if $\alpha = -a$, $\beta = 0$, $\gamma = b$ and $\delta = 1$. In this sense, from the above conditions, it yields that $\delta = \alpha + \beta + \gamma - 1$ implies $b-a=2$; and $\delta = -(\alpha + \beta + \gamma) + 1$ implies $a=b$. So, this motivates to analyze in detail Lozi map in the particular cases $a=b$ and $b-a=2$. Nevertheless, we will restrict our attention to case $a=b$.

\section{A family of max-type difference equations} \label{Sec_max}

In the present section we will analyze a concrete family of max-type difference equations. Our purpose with such study is to illustrate that, thanks to the transformations developed in the previous section, the analysis of a concrete Lozi map is sufficient to know the behaviour of a whole family of equations.

In this direction, we will deal with the particular one-parametric family of max-type difference equations $$x_{n+1} = \frac{\max\{ x_n^3, A\} }{x_nx_{n-1} }, \ \ \ A>0, $$ \noindent which is topologically conjugate to the generalized Lozi map
\begin{equation} \label{Eq_Lex} 
y_{n+1} = \frac{3}{2}|y_n| + \frac{1}{2} y_n - y_{n-1}.
\end{equation}

Notice that Equation (\ref{Eq_Lex}) is a particular case of Equation (\ref{Eq_gen_Lozi}) with $\delta = 0$ and $\alpha + \beta + \gamma = 1$. The dynamics of (\ref{Eq_Lex}) is easily described by the following results. The proof of the first one is straightforward and will be omitted.

\begin{proposition}
Equation (\ref{Eq_Lex}) has infinitely many equilibrium points, namely, $\mathcal{F} = \{ \bar{x}: \bar{x}\geq 0\}$,
\end{proposition}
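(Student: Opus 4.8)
The plan is to find the fixed points of the difference equation (\ref{Eq_Lex}) directly, and then simply observe that the claimed set is exactly this collection. An equilibrium $\bar y$ of (\ref{Eq_Lex}) must satisfy $\bar y = \frac{3}{2}|\bar y| + \frac{1}{2}\bar y - \bar y$, i.e. $\bar y = \frac{3}{2}|\bar y| - \frac{1}{2}\bar y$, which rearranges to $\frac{3}{2}\bar y = \frac{3}{2}|\bar y|$, that is $\bar y = |\bar y|$. This holds precisely when $\bar y \ge 0$. Hence the equilibrium set of the generalized Lozi map (\ref{Eq_Lex}) is the half-line $\{\bar y : \bar y \ge 0\}$.

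Next I would transfer this to the max-type equation $x_{n+1} = \max\{x_n^3, A\}/(x_n x_{n-1})$. Either one invokes the topological conjugacy of Remark~\ref{Remark_conjugacion} (with $\alpha = 3/2$, $\beta = 1/2$, $\gamma = -1$, $\delta = 0$), under which the homeomorphism $\phi(x,y) = (\log_A(x^q)+p, \log_A(y^q)+p)$ carries the fixed-point set of one system bijectively onto that of the other — and since conjugacies preserve equilibria, the max-equation also has a continuum of equilibria; or, more transparently for the reader, one checks directly that $\bar x$ is an equilibrium of the max-equation iff $\bar x^2 = \max\{\bar x^3, A\}$, i.e. iff $\bar x^2 \ge A$ and $\bar x^2 \ge \bar x^3$, the latter meaning $0 < \bar x \le 1$; so the equilibrium set is $[\sqrt{A},1]$ when $A \le 1$ (a genuine continuum) and empty when $A>1$. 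This is consistent with the Lozi picture: the half-line $\{\bar y \ge 0\}$ is mapped by $\phi^{-1}$ precisely onto such an interval (or onto the empty set), depending on where the parameter $A$ lands, which matches the casuistic discussed after Lemma~\ref{Lemma_initial}.

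Since the proposition is stated only for the Lozi map (\ref{Eq_Lex}) and not for a specific member of the max-family, the cleanest write-up is the one-line computation $\bar y = |\bar y| \iff \bar y \ge 0$ given above, and this is presumably why the authors call the proof ``straightforward'' and omit it. There is essentially no obstacle here: the only subtlety worth a remark is that the presence of the absolute value is exactly what allows a whole half-line of equilibria, in contrast with a linear second-order equation, where the equilibrium set of $y_{n+1} = \beta y_n + \gamma y_{n-1} + \delta$ is generically a single point. The ``hard part'', such as it is, is purely bookkeeping: making sure the coefficient identification $\alpha = 3/2$, $\beta = 1/2$, $\gamma = -1$ is used consistently if one chooses to route the argument through the conjugacy rather than solving the fixed-point equation for the Lozi map directly.
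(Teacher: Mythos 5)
Your core computation is correct and is precisely the ``straightforward'' argument the paper omits: an equilibrium of $y_{n+1} = \tfrac{3}{2}|y_n| + \tfrac{1}{2}y_n - y_{n-1}$ satisfies $\bar y = \tfrac{3}{2}|\bar y| - \tfrac{1}{2}\bar y$, hence $\bar y = |\bar y|$, i.e.\ $\bar y \ge 0$. Since the proposition concerns only Equation~(\ref{Eq_Lex}), this one line settles it.

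Your supplementary transfer to the max-type equation, however, contains a genuine error. At an equilibrium of $x_{n+1} = \max\{x_n^3, A\}/(x_n x_{n-1})$ one has $\bar x = \max\{\bar x^3, A\}/(\bar x \cdot \bar x)$, i.e.\ $\bar x^{3} = \max\{\bar x^{3}, A\}$ (not $\bar x^{2} = \max\{\bar x^{3}, A\}$ as you wrote), which holds exactly when $\bar x \ge \sqrt[3]{A}$. The equilibrium set is therefore the half-line $[\sqrt[3]{A}, \infty)$ for every $A>0$ --- consistent with the paper's subsequent Corollary and with the fact that $\phi$ carries the half-line $\{\bar y \ge 0\}$ homeomorphically onto it. Your conclusion that the set is $[\sqrt{A},1]$, or empty when $A>1$, should itself have signalled the mistake: a topological conjugacy maps fixed points bijectively onto fixed points, so it can never send a nonempty continuum of equilibria to the empty set. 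The main proof stands; only this side computation needs correcting.
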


\begin{theorem}
Every solution of Equation (\ref{Eq_Lex}) which is not an equilibrium point diverges to $\infty$.
\end{theorem}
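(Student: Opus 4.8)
The plan is to work directly with the piecewise-linear Equation~(\ref{Eq_Lex}), which is more transparent than any of its max-type conjugates. Writing
\[
y_{n+1} = \begin{cases} 2y_n - y_{n-1}, & y_n \ge 0,\\ -y_n - y_{n-1}, & y_n < 0,\end{cases}
\]
we see that when the current term is nonnegative the dynamics is the linear shear $y_{n+1}=2y_n-y_{n-1}$, under which the first difference $y_{n+1}-y_n=y_n-y_{n-1}$ is conserved; when it is negative the dynamics is the order-three rotation $y_{n+1}=-y_n-y_{n-1}$. The core observation is a \emph{trapping lemma}: if $y_N\ge 0$ and $y_{N+1}\ge y_N$ for some $N\ge -1$, then, by an immediate induction, the pair of conditions ``$y_m\ge 0$ and $y_m\ge y_{m-1}$'' is preserved, so $y_m=y_{N+1}+(m-N-1)(y_{N+1}-y_N)\ge 0$ for all $m\ge N+1$; in particular, if $y_{N+1}-y_N$ is \emph{strictly} positive, then $y_n\to+\infty$. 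The whole argument then reduces to showing that every non-equilibrium solution eventually enters such a configuration with strictly positive difference.

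I would then split into two cases. If $y_n\ge 0$ for all $n\ge -1$, the recurrence is globally the linear one, so $y_n=y_0+n(y_0-y_{-1})$ for $n\ge 0$; the choice $y_0-y_{-1}<0$ would force some later term to be negative, while $y_0-y_{-1}=0$ would make the solution the equilibrium $\bar y=y_0\ge 0$, so for a non-equilibrium solution $y_0-y_{-1}>0$ and the trapping lemma with $N=-1$ gives $y_n\to+\infty$. If instead some term is negative, let $n_0$ be the first index with $y_{n_0}<0$. A one-line sign check shows that $y_n<0$ and $y_{n+1}<0$ force $y_{n+2}=-y_{n+1}-y_n>0$, so no solution has three consecutive negative terms; hence the negative run starting at $n_0$ has length $r\in\{1,2\}$ and $y_{n_0+r}\ge 0$.

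It remains to verify that already at the index $M:=n_0+r$ the trapping lemma applies with strictly positive difference. When $n_0\ge 0$ we know $y_{n_0-1}\ge 0$ by minimality, and the value of $r$ pins down the sign of $y_{n_0+1}=-y_{n_0}-y_{n_0-1}$, i.e.\ $y_{n_0-1}\le |y_{n_0}|$ if $r=1$ and $y_{n_0-1}>|y_{n_0}|$ if $r=2$. Tracking which regime governs each of the next two or three steps, one finds that $y_M\ge 0$ and that the forward difference $y_{M+1}-y_M$ equals $2|y_{n_0}|-y_{n_0-1}$ when $r=1$ and $2y_{n_0-1}-|y_{n_0}|$ when $r=2$; in both cases it is at least $|y_{n_0}|>0$. (When $n_0=-1$ there is no term $y_{n_0-1}$, and the corresponding estimates --- $y_0\ge 0>y_{-1}$ forcing $y_1-y_0=y_0-y_{-1}>0$ if $r=1$, and $y_1>0$ with $y_2-y_1=y_1-y_0>0$ if $r=2$ --- are even easier.) Invoking the trapping lemma at $N=M$ then yields $y_n\to+\infty$, which in particular proves divergence to infinity. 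I expect this final bookkeeping --- correctly identifying the active regime at each of the handful of steps following $y_{n_0}$ and checking the elementary inequalities in each sub-case --- to be the only delicate part; everything else is routine.
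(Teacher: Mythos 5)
Your argument is correct; I checked the key formulas and they hold (for $u=y_{n_0-1}\ge 0$, $v=y_{n_0}<0$ one gets $y_{n_0+1}=|v|-u$, and then $y_{M+1}-y_M=2|v|-u\ge|v|$ when $r=1$, while for $r=2$ one finds $y_{n_0+2}=u$ and $y_{M+1}-y_M=2u-|v|>|v|$). The paper proves the same statement by an exhaustive six-way case split on the sign pattern and ordering of the initial pair $(y_{-1},y_0)$, reducing everything to the two base cases $0\le y_{-1}<y_0$ and $y_{-1}\le 0<y_0$; its most involved case ($0\le y_0<y_{-1}$) explicitly tracks the arithmetic descent $y_0-\varepsilon, y_0-2\varepsilon,\dots$ until the orbit first turns negative and then follows two more iterates. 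Your organization is the mirror image: instead of classifying initial conditions, you classify trajectories (all-nonnegative versus first negative index $n_0$), isolate the conserved-difference mechanism in a reusable trapping lemma, and add the structural observation that no solution has three consecutive negative terms, which caps the amount of explicit iteration at two or three steps regardless of where the orbit starts. The two proofs share the same engine --- the conserved first difference in the nonnegative regime plus a short hand computation around each sign change --- but your version avoids the paper's bookkeeping with the integer $N$ in its Case 4 and makes the uniform lower bound $y_{M+1}-y_M\ge|y_{n_0}|>0$ explicit, which the paper's case-by-case reduction leaves implicit.
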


\begin{proof}
We distinguish several cases depending on the values of the initial conditions.

\textbf{Case $1$:} If the initial conditions $(y_{-1},y_0)$ of (\ref{Eq_Lex}) hold $0\leq y_{-1} < y_0$, then by induction it is easily seen that $y_n = y_{-1} + (n+1)(y_0-y_{-1})$ for all $n\geq 1$, and, hence, $\lim_{n\rightarrow +\infty} y_n = +\infty$.

\textbf{Case $2$:} Similarly, if the initial conditions $(y_{-1}, y_0)$ satisfy $y_{-1} \leq 0 < y_0$ or $y_{-1} < 0 \leq y_0$, then $y_1 = 2y_0 - y_{-1} > y_0 > 0$, and we can apply Case $1$ to affirm that the solution goes to $+\infty$ when $n$ tends to $+\infty$.

\textbf{Case $3$:} Again, the solution is unbounded if the initial conditions verify $y_{-1} \leq y_0 \leq 0$, $|y_{-1}| + |y_0| >0$, since now we find $y_1 = -\frac{3}{2}y_0 + \frac{1}{2}y_0 - y_{-1} = -y_0-y_{-1}>0$ and we can apply Case $2$.

\textbf{Case $4$:} Suppose that $0\leq y_0 < y_{-1}$. Consider the value $\varepsilon = y_{-1} - y_0$ and fix the smallest non negative integer $N$ such that $$y_0 - (N+1)\varepsilon < 0 \leq y_0 - N\varepsilon.$$ Then, it is a simple matter to check that $$y_1 = 2y_0 - y_{-1} = y_0 - \varepsilon, \ldots, y_{N} = y_0 - N\varepsilon, y_{N+1} = y_0 + (N+1)\varepsilon,$$ \noindent and being $y_{N+1} < 0 \leq y_{N}$, we obtain 
\begin{eqnarray*}
y_{N+2} &=& \frac{3}{2}(-y_0 + (N+1)\varepsilon) + \frac{1}{2}(y_0 - (N+1)\varepsilon) - y_0 + N\varepsilon \\
&=& -2y_0 + (2N+1)\varepsilon.
\end{eqnarray*}

Now, if $y_{N+2} \geq 0$, we finish because we have the pair of new initial conditions $(y_{N+1}, y_{N+2})$, with $y_{N+1}<0\leq y_{N+2}$ and we apply Case $2$. But if $y_{N+2} < 0$, following the iteration we deduce that 
\begin{eqnarray*}
y_{N+3} &=& \frac{3}{2}(2y_0 - (2N+1)\varepsilon)+\frac{1}{2}(-2y_0 + (2N+1)\varepsilon)-(y_0 - (N+1)\varepsilon) \\
&=& y_0 - N\varepsilon \geq 0,
\end{eqnarray*}

\noindent and we can apply Case $2$ to the new initial conditions $y_{N+2} < 0 \leq y_{N+3}$.

\textbf{Case $5$:} For the case $y_0 \leq y_{-1} \leq 0$, $|y_{-1}| + |y_0| >0$, take into account that $y_1 = -y_0 - y_{-1} > 0$ and use Case $2$ to the initial conditions $y_0 \leq 0 < y_1$.

\textbf{Case $6$:} With respect to the last case of our discussion, we have $y_1 = -y_0 - y_{-1}$.

If $y_{-1}+y_0 < 0$, then $y_0 \leq 0 < y_1$ and we finish by a simple application of Case $2$. If, on the contrary, $y_{-1} + y_0 \geq 0$, then $y_0 < 0$, $y_1\leq 0$, and it suffices to apply Case 3 or Case 5 depending on whether $y_0\leq y_1\leq 0$ or $y_1\leq y_0\leq 0$, respectively. 

\end{proof}

\begin{corollary}
Given an arbitrary value $A \in (0, + \infty)$, consider the max-type difference equation $$x_{n+1} = \frac{\max \{x_n^3, A\}}{x_nx_{n-1}}.$$ For any arbitrary positive initial conditions $(x_{-1},x_0)$, either $x_{-1}=x_0 = \bar{x}$ is an equilibrium point, or the solution generated by them diverges to infinity. The stationary solution $(\bar{x}, \bar{x}, \bar{x}, \ldots)$ appears for all the values $\bar{x} \geq \sqrt[3]{A}$.
\end{corollary}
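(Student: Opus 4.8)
The plan is to read the Corollary off from the Theorem by means of the topological conjugacy already available. Writing Equation~(\ref{Eq_Lex}) in the notation of (\ref{Eq_gen_Lozi}), we have $\alpha=\frac32$, $\beta=\frac12$, $\gamma=-1$, $\delta=0$, so $2\alpha=3$, $\alpha-\beta=1$, $-\gamma=1$ and $\alpha+\beta+\gamma-1=0$; hence Equation~(\ref{Eq_Lex}) is topologically conjugate, via (\ref{prop1}), to $z_{n+1}=\frac{\max\{z_n^{3},B\}}{z_nz_{n-1}}$ for every $B>0$, which is exactly the family in the statement (with $B=A$).

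First I would fix a conjugating homeomorphism that is monotone and proper. Following Remark~\ref{Remark_conjugacion}, choose a base $a_0>1$ and an exponent $q>0$ (so $\alpha/q>0$), and, given $A>0$, a real $p$ with $a_0^{-2\alpha p/q}=a_0^{-3p/q}=A$, that is, $p=-\frac{q}{3}\log_{a_0}A$; note that then $a_0^{-p/q}=A^{1/3}=\sqrt[3]{A}$. With this choice $\phi(x,y)=(q\log_{a_0}x+p,\,q\log_{a_0}y+p)$ is an increasing homeomorphism of $(0,\infty)^2$ onto $\mathbb R^2$ conjugating the max-equation to (\ref{Eq_Lex}); in particular, corresponding solutions $(x_n)$ and $(y_n)$ satisfy $y_n=q\log_{a_0}x_n+p$, equivalently $x_n=a_0^{(y_n-p)/q}$, for all $n$.

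Next I would transfer the two facts about (\ref{Eq_Lex}). Because $\phi$ maps the diagonal of $(0,\infty)^2$ onto the diagonal of $\mathbb R^2$, and the equilibria of (\ref{Eq_Lex}) are precisely the $\bar y\ge0$, the constant solutions of the max-equation are precisely the $\bar x$ with $q\log_{a_0}\bar x+p\ge0$, i.e.\ $\bar x\ge a_0^{-p/q}=\sqrt[3]{A}$; alternatively one substitutes $x_{n+1}=x_n=x_{n-1}=\bar x$ directly, obtaining $\bar x=\max\{\bar x^3,A\}/\bar x^2$ and hence $\bar x^3\ge A$. For the divergence, take positive $(x_{-1},x_0)$ not of the form $(\bar x,\bar x)$ with $\bar x\ge\sqrt[3]{A}$; then $(y_{-1},y_0)=\phi(x_{-1},x_0)$ is not an equilibrium state of (\ref{Eq_Lex}), since $\phi$ is a bijection carrying the one equilibrium set onto the other, so the Theorem gives $y_n\to+\infty$, and therefore $x_n=a_0^{(y_n-p)/q}\to+\infty$ as $a_0>1$ and $q>0$. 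Together with the description of the equilibria, this is exactly the Corollary.

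The only point needing care — the \emph{main obstacle}, although a mild one — is the behaviour of the conjugacy at the ends of the interval: a general homeomorphism between $(0,\infty)^2$ and $\mathbb R^2$ could reverse order and carry $+\infty$ to $0^+$, so that ``diverges to infinity'' would not pass over verbatim. The choice $a_0>1$, $q>0$ removes this, for then both $\phi$ and $\phi^{-1}$ are increasing and proper and send $+\infty$ to $+\infty$. One should also note that the parameter $A$ of the max-equation, which may equal $1$, is unrelated to the logarithm base $a_0$ of Lemma~\ref{Lemma_initial} (which must differ from $1$), so the value $A=1$ is not special here.
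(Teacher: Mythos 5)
Your proposal is correct and is essentially the argument the paper intends: the Corollary is stated as an immediate consequence of the preceding Proposition and Theorem via the conjugacy of Section~\ref{Sec_transformation} (with $\alpha=\tfrac32$, $\beta=\tfrac12$, $\gamma=-1$, $\delta=0$, so (\ref{prop1}) yields the family $z_{n+1}=\max\{z_n^3,B\}/(z_nz_{n-1})$ for all $B>0$), and you have simply made the transfer explicit. Your added care about choosing the base and exponent so that the conjugating homeomorphism is increasing and carries $+\infty$ to $+\infty$, and your direct verification that the equilibria are exactly $\bar x\ge\sqrt[3]{A}$, are exactly the details the paper leaves to the reader.
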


In view of this result, a question that may be of some interest is to study the dynamics of generalized Lozi map in the case $\delta = 0$ and $\alpha + \beta + \gamma = 1$.

\section{Lozi map for $a=b$}\label{S:caso_a=b}

Now, we will focus on the Lozi map, in the particular case where the parameters verify $a=b$,
\begin{equation}\label{Eq_Lozi_a=b}
    x_{n+1} = 1 - a|x_n| + ax_{n-1}.
\end{equation}

For such equation it is straightforward to determine its fixed points and we will omit its proof.

\begin{lemma}\label{L:points}
The equilibrium points of Equation (\ref{Eq_Lozi_a=b}) are given by:
\begin{itemize}
    \item[a)] $\bar{x}=1$ if $a\in \left(-\infty, \frac{1}{2}\right]$.
    \item[b)] $\bar{x}=1$ and $\bar{x} = \frac{1}{1-2a}$ if $a\in \left(\frac{1}{2},+\infty\right)$.
\end{itemize}
\end{lemma}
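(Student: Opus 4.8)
The plan is to impose the stationarity condition and solve the resulting scalar equation in $\bar x$ by splitting according to the sign of $\bar x$. First I would substitute the constant sequence $x_n \equiv \bar x$ into Equation (\ref{Eq_Lozi_a=b}), obtaining $\bar x = 1 - a|\bar x| + a\bar x$, which I would rewrite as $(1-a)\bar x + a|\bar x| = 1$.

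Next I would distinguish the two natural cases. If $\bar x \ge 0$, then $|\bar x| = \bar x$ and the equation collapses to $\bar x = 1$; since $1 \ge 0$, this is a legitimate nonnegative solution for every value of $a$. If $\bar x < 0$, then $|\bar x| = -\bar x$ and the equation becomes $(1-2a)\bar x = 1$. When $a = \tfrac12$ this reads $0 = 1$, which is impossible, so no negative equilibrium exists. When $a \ne \tfrac12$ it yields the candidate $\bar x = \tfrac{1}{1-2a}$, and one checks that this candidate is genuinely negative exactly when $1-2a < 0$, i.e. $a > \tfrac12$; for $a < \tfrac12$ one gets $\tfrac{1}{1-2a} > 0$, which contradicts the case hypothesis $\bar x < 0$, so again there is no negative equilibrium.

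Collecting the cases gives the statement: for $a \in \left(-\infty,\tfrac12\right]$ the only equilibrium is $\bar x = 1$, whereas for $a \in \left(\tfrac12,+\infty\right)$ there are exactly the two equilibria $\bar x = 1$ and $\bar x = \tfrac{1}{1-2a}$, and these are distinct since $\tfrac{1}{1-2a} < 0 < 1$ in that range. There is no substantive obstacle here; the only points requiring a little care are to discard each candidate root that falls outside the half-line dictated by its case hypothesis (in particular rejecting $\tfrac{1}{1-2a}$ when $a < \tfrac12$) and to treat the degenerate value $a = \tfrac12$ separately, as done above.
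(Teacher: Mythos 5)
Your proof is correct and is exactly the routine case analysis the paper has in mind when it says the determination of the fixed points is straightforward and omits the proof. The substitution, the split on the sign of $\bar{x}$, the rejection of $\tfrac{1}{1-2a}$ when $a<\tfrac12$, and the separate treatment of $a=\tfrac12$ are all handled properly.
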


It is well-known (see \cite[pages 181-182]{Elhadj14}) that if $|a| + |b| <1$, then the Lozi map (\ref{Eq_Lozi}) has a unique fixed point which is globally attractor. In the case of $a=b$, such condition reads as $|a| < \frac{1}{2}$. Therefore,

\begin{proposition}
Let $a \in \left(-\frac{1}{2}, \frac{1}{2}\right)$. Then, all the solutions of Equation (\ref{Eq_Lozi_a=b}) converge to the equilibrium point $\bar{x}=1$.
\end{proposition}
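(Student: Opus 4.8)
The statement is an immediate corollary of the global attractivity fact recalled just above. Indeed, Equation~(\ref{Eq_Lozi_a=b}) is the Lozi map (\ref{Eq_Lozi}) in the case $b=a$, and then $|a|+|b|=2|a|<1$ is exactly the hypothesis $|a|<\tfrac12$; by \cite[pp.~181--182]{Elhadj14} the unique fixed point is globally attracting, and by part a) of Lemma~\ref{L:points} (applicable since $(-\tfrac12,\tfrac12)\subset(-\infty,\tfrac12]$) that fixed point is $\bar x=1$. So the shortest route is simply to quote that reference. For completeness I would also include the following elementary self-contained argument.

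Put $u_n:=x_n-1$. Subtracting the identity $1=1-a\cdot 1+a\cdot 1$ from (\ref{Eq_Lozi_a=b}) and writing $x_n=u_n+1$, $x_{n-1}=u_{n-1}+1$, one obtains $u_{n+1}=-a\bigl(|u_n+1|-1\bigr)+a\,u_{n-1}$. The reverse triangle inequality gives $\bigl||u_n+1|-1\bigr|\le|u_n|$, so that
\begin{equation}\label{E:plan-ineq}
|u_{n+1}|\ \le\ |a|\bigl(|u_n|+|u_{n-1}|\bigr),\qquad n\ge 0 .
\end{equation}
Thus the problem is reduced to the scalar linear comparison recurrence $a_{n+1}=|a|(a_n+a_{n-1})$, whose characteristic equation $t^2-|a|t-|a|=0$ has positive root $t^\ast=\tfrac12\bigl(|a|+\sqrt{a^2+4|a|}\bigr)$; squaring the inequality $\sqrt{a^2+4|a|}<2-|a|$ shows that $t^\ast<1$ precisely when $8|a|<4$, i.e.\ when $|a|<\tfrac12$.

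To finish, I would prove by induction on $n$ that $|u_n|\le C\,(t^\ast)^{\,n}$ with $C:=\max\{\,|u_0|,\ t^\ast|u_{-1}|\,\}$, the base cases $n=-1,0$ being built into the choice of $C$. For the inductive step one uses (\ref{E:plan-ineq}) together with the defining relation $|a|(t^\ast+1)=(t^\ast)^2$: if $|u_n|\le C(t^\ast)^n$ and $|u_{n-1}|\le C(t^\ast)^{n-1}$, then $|u_{n+1}|\le |a|C(t^\ast)^{n-1}(t^\ast+1)=C(t^\ast)^{n+1}$. Since $t^\ast<1$, this yields $u_n\to0$, that is $x_n\to1$, for every pair of initial conditions $(x_{-1},x_0)$. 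There is no real obstacle here; the one computation worth doing carefully is the equivalence $t^\ast<1\Leftrightarrow|a|<\tfrac12$, and the clean handling of the absolute value via the substitution $u_n=x_n-1$ and the reverse triangle inequality. Alternatively, one can phrase the same estimate at the level of the planar map $F(x,y)=(y,\,1-a|y|+ax)$, showing that in the weighted norm $\|(x,y)\|=c|x|+|y|$ it is a contraction, the set of admissible $c$ being nonempty exactly when $|a|<\tfrac12$; this is the two-dimensional shadow of (\ref{E:plan-ineq}).
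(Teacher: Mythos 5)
Your first paragraph is exactly the paper's argument: the proposition is stated there as an immediate consequence of the cited fact from \cite[pp.~181--182]{Elhadj14} that $|a|+|b|<1$ forces the unique fixed point of the Lozi map to be a global attractor, specialized to $b=a$, with Lemma~\ref{L:points}(a) identifying the fixed point as $\bar{x}=1$; the paper offers no further proof. Your self-contained argument goes beyond the paper and is correct: the substitution $u_n=x_n-1$, the reverse triangle inequality giving $|u_{n+1}|\le|a|(|u_n|+|u_{n-1}|)$, the identification of the dominant root $t^\ast=\tfrac12(|a|+\sqrt{a^2+4|a|})$ of $t^2-|a|t-|a|=0$, the equivalence $t^\ast<1\Leftrightarrow|a|<\tfrac12$, and the induction using $|a|(t^\ast+1)=(t^\ast)^2$ all check out, yielding geometric convergence $x_n\to1$ for arbitrary initial conditions. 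This buys something the paper does not supply, namely an explicit rate of convergence and independence from the external reference; the only cosmetic point is the degenerate case $a=0$, where $t^\ast=0$ makes the choice $C=\max\{|u_0|,t^\ast|u_{-1}|\}$ and the base case $n=-1$ awkward, but there the equation reads $x_{n+1}=1$ and convergence is immediate, so this deserves at most a parenthetical remark.
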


In view of the fact that $a=\frac{1}{2}$ and $a=-\frac{1}{2}$ are boundary values for the condition $|a|<\frac{1}{2}$, the study of the difference equations $$x_{n+1} = 1 - \frac{1}{2}|x_n| + \frac{1}{2}x_{n-1} \ \ \text{and} \ \ x_{n+1} = 1 + \frac{1}{2}|x_n| - \frac{1}{2}x_{n-1}$$ \noindent can merit to pay our attention.

In the case of $a=\frac{1}{2}$, it seems that all the solutions, other than periodic solutions, converge to a $2$-periodic solution. We will prove it later.

Concerning periodic solutions of Lozi Equation (\ref{Eq_Lozi_a=b}), we have to discard the interval of values $a \in \left(-\frac{1}{2}, \frac{1}{2}\right)$ because in this segment of values we know that all the solutions converge to the global attractor $\bar{x}=1$. For $|a|>\frac{1}{2}$, by a straightforward way, we find:
\begin{lemma}
Assume that $|a|> \frac{1}{2}$. Then, the only initial conditions $(x_{-1},x_0)$ which generate $2$-periodic solutions are given by:
$$(x_{-1},x_0) = \left(\frac{1}{2a^2-2a+1}, \frac{1-2a}{2a^2-2a+1} \right), $$ or $$ (x_{-1},x_0) = \left(\frac{1-2a}{2a^2-2a+1} , \frac{1}{2a^2-2a+1}\right)$$
with $a>\frac{1}{2}$.
\end{lemma}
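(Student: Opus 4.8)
The plan is to translate the periodicity condition directly into equations for the initial data. Set $u=x_{-1}$ and $v=x_{0}$. A solution of (\ref{Eq_Lozi_a=b}) has period $2$ exactly when $x_{1}=x_{-1}$ and $x_{2}=x_{0}$; applying the recurrence for $n=0$ and $n=1$ (and using $x_{1}=u$), this is equivalent to the system
\begin{equation*}
u = 1-a|v|+au,\qquad v = 1-a|u|+av,
\end{equation*}
and, conversely, once this system holds an easy induction on $n$ shows the whole orbit is $2$-periodic. Thus it suffices to solve this system under the hypothesis $|a|>\tfrac12$ and to set aside the solutions with $u=v$, which are precisely the equilibria described in \lemref{L:points} and do not give a genuine $2$-cycle.

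I would argue by splitting into the four sign patterns of $(u,v)$. If $u,v\ge 0$ or $u,v\le 0$, the absolute values are linear and subtracting the two equations gives $(u-v)(1-2a)=0$ in the first case and $u-v=0$ in the second; since $|a|>\tfrac12$ excludes $a=\tfrac12$, in both cases $u=v$, and the remaining single equation then forces $u=1$ or $u=\tfrac{1}{1-2a}$ respectively --- exactly the two equilibria, so no genuinely $2$-periodic orbit appears here. If instead the signs of $u$ and $v$ differ, the absolute values turn the system into a linear one in $(u,v)$ whose coefficient matrix has determinant $(1-a)^{2}+a^{2}=2a^{2}-2a+1$, which is strictly positive (its discriminant equals $-4$); hence there is a unique solution, and Cramer's rule yields precisely the pair $\left(\tfrac{1}{2a^{2}-2a+1},\tfrac{1-2a}{2a^{2}-2a+1}\right)$ in the case $u\ge 0\ge v$ and the reversed pair in the case $v\ge 0\ge u$.

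It then remains to check consistency of the sign hypotheses with the formulas obtained. In the mixed cases the constraint $v\le 0$ (respectively $u\le 0$) reads $1-2a\le 0$, so, together with $|a|>\tfrac12$, it can hold only when $a>\tfrac12$; conversely, for $a>\tfrac12$ these pairs do satisfy all the sign requirements and have $u\ne v$ (indeed $u-v=\tfrac{2a}{2a^{2}-2a+1}\ne 0$ since $a\ne 0$), so they generate solutions of minimal period $2$. Collecting the four cases proves the claim, with the occurrence of such solutions restricted to $a>\tfrac12$. The only delicate point is the bookkeeping of the sign cases --- in particular making sure that each formula produced is compatible with the sign assumption used to derive it --- together with the observation that the $u=v$ solutions are the equilibria of \lemref{L:points} rather than new $2$-periodic orbits; no real obstacle is expected.
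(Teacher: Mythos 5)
Your proof is correct, and it is exactly the ``straightforward'' argument the paper alludes to but omits: impose $x_1=x_{-1}$, $x_2=x_0$, split into the four sign patterns of $(x_{-1},x_0)$, observe that the same-sign cases collapse (via $1-2a\neq 0$) to the equilibria of Lemma~\ref{L:points}, and solve the mixed-sign linear system with determinant $2a^2-2a+1>0$, checking that the resulting pair is sign-consistent only when $a>\frac{1}{2}$. All the computations (including the Cramer's rule output and the consistency bookkeeping) check out, so nothing needs to be added.
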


In this case, notice that the initial conditions have different signs. 

Concerning the stability of these $2$-periodic points for the Lozi map $F(x,y) = (y, 1-a|y|+ax)$, we have $$DF\left(\frac{1}{2a^2-2a+1}, \frac{1-2a}{2a^2-2a+1} \right) = \begin{pmatrix} 0 & 1 \\ a & a \end{pmatrix},$$ \noindent and $$DF\left( \frac{1-2a}{2a^2-2a+1}, \frac{1}{2a^2-2a+1} \right) = \begin{pmatrix} 0 & 1 \\ a & -a \end{pmatrix}.$$ Therefore, $$DF^2\left(\frac{1}{2a^2-2a+1}, \frac{1-2a}{2a^2-2a+1} \right) = \begin{pmatrix} 0 & 1 \\ a & a \end{pmatrix}\cdot \begin{pmatrix} 0 & 1 \\ a & -a \end{pmatrix} = \begin{pmatrix} a & -a \\ a^2 & a-a^2 \end{pmatrix}. $$

The corresponding eigenvalues are computed by the characteristic equation $$(-a^2+a-\lambda)\cdot(a-\lambda)+a^3 = 0,$$ $$\lambda^2 + (a^2-2a)\lambda + a^2 = 0,$$ \noindent whose roots are given by $$\lambda_1 = \frac{2a-a^2 + a\sqrt{a^2-4a}}{2}, \ \lambda_2 = \frac{2a-a^2 - a\sqrt{a^2-4a}}{2},$$

\noindent and these roots lie inside the unit circle if and only if the coefficients of the characteristic polynomial verify (consult \cite[Th.2.37]{Elaydi05}): $$|-a^2+2a| < 1+a^2 < 2,$$ \noindent which is obviously true for the range of values $a\in \left(\frac{1}{2}, 1\right)$. This means that these $2$-periodic points are locally asymptotically stable\footnote{Roughly speaking, we say that $2$-periodic points are locally asymptotically stable if they are locally stable and for every set of initial conditions in a certain neighborhood of the periodic points they converge to the periodic orbit. For a more accurate definition, consult for instance \cite{Elaydi05}. } when $\frac{1}{2} < a < 1$. 

\subsection{Case $a=\frac{1}{2}$}

We consider the parameter $a=\frac{1}{2}$ for the Lozi map
\begin{equation} \label{Eq_Lozi_a0.5}
    x_{n+1} = 1 - \frac{1}{2}|x_n| + \frac{1}{2}x_{n-1}.
\end{equation}

The max-difference equation associated to Equation (\ref{Eq_Lozi_a0.5}) is given by (use Proposition~\ref{Prop_deltaneq0})
\begin{equation} \label{EMax}
    x_{n+1} = x_n^{1/2}\cdot x_{n-1}^{1/2} \cdot \max\left\{ \frac{1}{x_n}, A \right\}, \ \ \ 0 < A < 1.
\end{equation}

The following results establish the fixed points and the $2$-periodic orbits of Equation (\ref{Eq_Lozi_a0.5}), respectively. The proof of the first one is immediate and will be omitted. 

\begin{lemma}
The unique equilibrium point of (\ref{Eq_Lozi_a0.5}) is $\bar{x} = 1$.
\end{lemma}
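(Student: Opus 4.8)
The plan is simply to solve the equilibrium equation by hand; this is the degenerate instance $a=\tfrac12$ of Lemma~\ref{L:points}, so no genuinely new argument is required. A constant sequence $x_n\equiv\bar x$ is a solution of Equation~(\ref{Eq_Lozi_a0.5}) exactly when $\bar x = 1 - \tfrac12|\bar x| + \tfrac12\bar x$, which rearranges to the single scalar condition $\bar x + |\bar x| = 2$.

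First I would split on the sign of $\bar x$. If $\bar x\ge 0$, then $|\bar x|=\bar x$ and the condition becomes $2\bar x = 2$, i.e.\ $\bar x = 1$; since $1\ge 0$, this value does lie in the branch where the computation is valid, so it is an admissible equilibrium. If instead $\bar x < 0$, then $|\bar x| = -\bar x$, the left-hand side collapses to $0$, and we would need $0 = 2$, a contradiction; hence no negative equilibrium exists. Combining the two cases, $\bar x = 1$ is the unique fixed point. Equivalently, one may just quote Lemma~\ref{L:points}\,(a) with $a=\tfrac12\in\left(-\infty,\tfrac12\right]$, noting that the second branch $\bar x = \tfrac{1}{1-2a}$ of part (b) blows up as $a\to\tfrac12$, consistent with its absence here.

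There is no real obstacle in this statement; the only points meriting a moment's care are the admissibility check (that the candidate root $\bar x=1$ produced from the $\bar x\ge 0$ branch indeed satisfies $\bar x\ge 0$) and verifying that the $\bar x<0$ branch contributes nothing. Both are immediate, which is why the authors classify the proof as routine.
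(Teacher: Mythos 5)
Your proof is correct, and since the paper omits this proof as immediate, your direct case split on the sign of $\bar{x}$ in the fixed-point equation $\bar{x}+|\bar{x}|=2$ is exactly the routine verification the authors had in mind (and it matches Lemma~\ref{L:points}(a) at $a=\tfrac12$). Nothing further is needed.
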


\begin{lemma}
Suppose that the initial condition $(x_{-1}, x_0) = (x,y)$ generates a $2$-periodic solution of Equation (\ref{Eq_Lozi_a0.5}). Then, $$0\leq x,y \leq 2; \ \ x+y=2; \ \ x\neq y.$$
\end{lemma}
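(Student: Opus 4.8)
The plan is to work directly with the Lozi recurrence $x_{n+1} = 1 - \frac{1}{2}|x_n| + \frac{1}{2}x_{n-1}$ and extract the constraints forced on $(x,y) = (x_{-1},x_0)$ by the requirement $x_1 = x_{-1}$ and $x_2 = x_0$ (equivalently, $x_{-1}=x_1$ together with one step of the recurrence, since a $2$-periodic orbit for a second-order equation is determined by asking that the orbit repeat with period $2$). I would first write $x_1 = 1 - \frac{1}{2}|y| + \frac{1}{2}x$ and impose $x_1 = x$, and $x_2 = 1 - \frac{1}{2}|x_1| + \frac{1}{2}x_0 = 1 - \frac12|x| + \frac12 y$ and impose $x_2 = y$. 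This gives the system
\begin{align*}
x &= 1 - \tfrac{1}{2}|y| + \tfrac{1}{2}x, \\
y &= 1 - \tfrac{1}{2}|x| + \tfrac{1}{2}y,
\end{align*}
which simplifies to $x = 2 - |y|$ and $y = 2 - |x|$.

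The next step is a short sign analysis of this reduced system $x = 2 - |y|$, $y = 2 - |x|$. From $x = 2 - |y| \le 2$ and $y = 2 - |x| \le 2$ we get $x,y \le 2$ immediately. Adding the two equations gives $x + y = 4 - |x| - |y|$. I would then rule out negative values: if, say, $x < 0$, then $|x| = -x$, so the first equation combined with $x=2-|y|\le 2$ is fine, but substituting $|x| = -x$ into $y = 2 - |x| = 2 + x$; and $x = 2 - |y|$ with $y = 2+x$. If $y \ge 0$ then $x = 2 - y = 2 - (2+x) = -x$, forcing $x = 0$, contradicting $x<0$; if $y < 0$ then $x = 2 + y = 2 + (2+x) = 4 + x$, impossible. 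Hence $x \ge 0$, and symmetrically $y \ge 0$. With $x, y \ge 0$ the equations become $x = 2 - y$ and $y = 2 - x$, i.e.\ the single relation $x + y = 2$. Finally, $x \ne y$ follows because $x = y$ together with $x+y=2$ would give $x = y = 1$, which is the (unique) equilibrium point $\bar x = 1$, not a genuine $2$-periodic solution; this is excluded by hypothesis since we assumed $(x,y)$ generates a $2$-periodic solution (understood as minimal period $2$, hence not a fixed point).

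I do not expect a serious obstacle here; the statement is elementary once the periodicity condition is written out. The one point requiring a little care is the bookkeeping of absolute-value signs when showing $x,y\ge 0$ — one must check that no negative solution slips through — and the (conventional) observation that a ``$2$-periodic solution'' should be read as one of minimal period $2$ so that the equilibrium $x=y=1$ is legitimately excluded, which is what forces $x \ne y$. Everything else is immediate algebra from the two equations $x = 2-|y|$ and $y = 2-|x|$.
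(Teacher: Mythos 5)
Your proposal is correct and follows essentially the same route as the paper: both set up the identical periodicity system $x = 1 - \tfrac12|y| + \tfrac12 x$, $y = 1 - \tfrac12|x| + \tfrac12 y$ and then perform an elementary sign analysis to exclude negative coordinates and conclude $x+y=2$ (the paper subtracts the equations to get $x-y=|x|-|y|$ before the case split, while you substitute directly, but this is a cosmetic difference). Your remark that $x\neq y$ rests on reading ``$2$-periodic'' as minimal period $2$ matches the paper's own convention, stated explicitly right after its corollary.
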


\begin{proof}
Assume that the pair $(x,y)$ provides a $2$-periodic sequence. From the corresponding iteration, we deduce that 
\begin{equation} \label{1}
    \left\{ \begin{matrix} x &=& 1 - \frac{1}{2} |y| + \frac{1}{2}x \\ y &=& 1 - \frac{1}{2}|x| + \frac{1}{2}y \end{matrix}. \right. 
\end{equation}

Equivalently, $\frac{1}{2}x = 1 - \frac{1}{2}|y|$ and $\frac{1}{2}y = 1 - \frac{1}{2}|x|$. Now, if we subtract both equations, we deduce
\begin{equation} \label{3}
    x-y = |x| - |y|.
\end{equation}

Next, we do a proof by cases.

$\bullet$ If $x,y \geq 0$, suppose that $x = \alpha \cdot y$, for some $\alpha \geq 0$. If $\alpha = 0$, $x=0$, we deduce that $y=2$, and it is immediate to check that the initial conditions $(x_{-1},x_0)=(0,2)$ generates a $2$-periodic solution. If $\alpha >0$, we find $\frac{1}{2}x = 1- \frac{1}{2} \frac{x}{\alpha}$; or $x=\frac{2\alpha}{1+\alpha}$. Then, $y = \frac{x}{\alpha} = \frac{2}{1+\alpha}$.

Notice that $x+y=2$. It is easy to see that $\left( \frac{2\alpha}{1+\alpha}, \frac{2}{1+\alpha} \right)$ generates a $2$-periodic orbit.

$\bullet$ If $x\geq 0, y \leq 0$, from (\ref{3}) we deduce that $y=0$, and therefore $x=2$. The initial conditions are $(2,0)$, already included in the above case. By symmetry, the case $x\leq 0, y\geq 0$ leads us to the same conclusion.

$\bullet$ If $x\leq 0, y \leq 0$, from (\ref{3}) we have $x-y=0$, or $x=y$. Replacing this equality in (\ref{1}) yields to $\frac{1}{2}x = 1 + \frac{1}{2}x$, which is impossible.
\end{proof}

Now, we return to the max-equation, for instance through the change of variables $y_n = \log_A \left(\frac{A}{x_n^k}\right) = \log_A(Ax_n) = 1 - \log_A x_n$ (we take $p=1, q=-1$),  where $(x_n)$ and $(y_n)$ are the corresponding solutions of Equation (\ref{EMax}) and the Lozi equation, respectively. It is direct to see then that the following result holds.

\begin{corollary}
Suppose that the initial conditions $(x_{-1}, x_0) = (x,y)$ generate a $2$-periodic solution of Equation (\ref{EMax}). Then, $$x,y \in \left[A,\frac{1}{A}\right], \ \ A \in (0,1) \ \ \text{with} \ \ xy=1, \ x\neq y.$$
\end{corollary}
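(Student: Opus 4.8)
The plan is to derive the statement from the topological conjugacy between (\ref{EMax}) and the Lozi equation (\ref{Eq_Lozi_a0.5}) provided by Proposition~\ref{Prop_deltaneq0}, together with the preceding Lemma, which describes the $2$-periodic initial conditions of (\ref{Eq_Lozi_a0.5}).

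First I would make the conjugacy explicit. With the change of variables $y_n=1+\log_A x_n$, i.e. $x_n=A^{y_n-1}$ (the normalization with $p=1$ in the proof of Proposition~\ref{Prop_deltaneq0}), a sequence of positive reals $(x_n)_{n\geq -1}$ solves (\ref{EMax}) if and only if $(y_n)_{n\geq -1}$ solves (\ref{Eq_Lozi_a0.5}); at the level of planar maps this is the homeomorphism $\phi(x,y)=\left(1+\log_A x,\ 1+\log_A y\right)$ of $(0,\infty)^2$ onto $\mathbb{R}^2$, with $\phi^{-1}(u,v)=\left(A^{u-1},A^{v-1}\right)$ (cf. Remark~\ref{Remark_conjugacion}). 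Since a conjugacy commutes with iterates, $\phi$ sends $2$-periodic points to $2$-periodic points, so $(x_{-1},x_0)=(x,y)$ generates a $2$-periodic solution of (\ref{EMax}) if and only if $(X,Y):=\left(1+\log_A x,\ 1+\log_A y\right)$ generates a $2$-periodic solution of (\ref{Eq_Lozi_a0.5}).

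Then I would apply the preceding Lemma to $(X,Y)$, obtaining $0\leq X,Y\leq 2$, $X+Y=2$ and $X\neq Y$, and translate these back through $x=A^{X-1}$, $y=A^{Y-1}$, bearing in mind that $0<A<1$. From $0\leq X\leq 2$ we get $-1\leq X-1\leq 1$, and since $t\mapsto A^{t}$ is strictly decreasing this gives $A=A^{1}\leq A^{X-1}\leq A^{-1}=\frac{1}{A}$, whence $x\in\left[A,\frac{1}{A}\right]$; similarly $y\in\left[A,\frac{1}{A}\right]$. From $X+Y=2$ we obtain $xy=A^{X-1}\,A^{Y-1}=A^{X+Y-2}=A^{0}=1$. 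Finally $X\neq Y$ forces $x\neq y$ by injectivity of $t\mapsto A^{t-1}$, which completes the argument.

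I do not expect a genuine obstacle here; the only points requiring attention are picking the normalization of the change of variables that is compatible with the admissible range $0<A<1$, and keeping in mind that $0<A<1$ reverses inequalities when $A^{t}$ is applied, while the transfer of $2$-periodicity through the homeomorphism is automatic. It may also be worth observing that the bounds are attained: the $2$-periodic pair $(X,Y)=(2,0)$ of (\ref{Eq_Lozi_a0.5}) corresponds to $(x,y)=\left(A,\frac{1}{A}\right)$.
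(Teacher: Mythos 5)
Your proposal is correct and follows essentially the same route as the paper: transfer the preceding lemma's description of the $2$-periodic initial conditions of Equation (\ref{Eq_Lozi_a0.5}) through the logarithmic conjugacy, which the paper leaves as ``direct to see''. The only difference is cosmetic: you take $y_n=1+\log_A x_n$, which is indeed the sign convention consistent with $0<A<1$ (the paper's displayed formula $\log_A(Ax_n)=1-\log_A x_n$ contains a slip, since $\log_A(Ax_n)=1+\log_A x_n$; the discrepancy is the reflection $y\mapsto 2-y$, under which the lemma's conclusions are invariant, so the corollary is unaffected).
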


Realize that we have included the condition $x\neq y$ in order to avoid the appearance of the equilibrium point $\bar{x}=1$.

Next, we are going to prove that any initial condition $(x_{-1}, x_0)$ generates by Equation~(\ref{Eq_Lozi_a0.5}) an asymptotically periodic orbit to some appropriate $2$-periodic orbit $(x,y,x,y,\ldots)$ with $x+y=2$.

Concerning the analysis of the stability for $2$-periodic orbits of $x_{n+1} = 1 - \frac{1}{2}|x_n| + \frac{1}{2}x_{n-1}$, since $F(x,y) = (y, 1 - \frac{1}{2}|y|+\frac{1}{2}x)$, and the coordinates are positive, it is $DF(x,y) = \begin{pmatrix} 0 & 1 \\ \frac{1}{2} & -\frac{1}{2} \end{pmatrix}$, therefore $$DF^2(x,y) = \begin{pmatrix} 0 & 1 \\ \frac{1}{2} & -\frac{1}{2} \end{pmatrix} \cdot \begin{pmatrix} 0 & 1 \\ \frac{1}{2} & -\frac{1}{2} \end{pmatrix} = \begin{pmatrix} \frac{1}{2} & -\frac{1}{2} \\ -\frac{1}{4} & \frac{3}{4} \end{pmatrix};$$ \noindent the corresponding eigenvalues are computed by the characteristic equation $$\lambda^2 - \frac{5}{4}\lambda + \frac{1}{4} = 0,$$ \noindent whose roots are given by $\lambda_1=1$ and $\lambda_2 =\frac{1}{4}$. Since one of the eigenvalues of the $2$-periodic point $(x,2-x)$ has modulo $1$, it is necessary to analyze its stability by other means different to the linearization technique.

As an initial example, consider $(x_{-1},x_0)=(0,0)$. We try to guess the behaviour at large of its associated orbit $(x_{-1}, x_0, x_1, x_2, \ldots, x_n, \ldots)$. The first elements are given by $$\left(0,0,1, \frac{1}{2}, \frac{5}{4}, \frac{5}{8}, \frac{21}{16}, \frac{21}{32},\ldots\right).$$

It is a simpler matter to see that all the elements are positive because by induction it is easily seen that $x_n = \frac{1}{3}(-1)^{n+1}- \frac{4}{3}\left(\frac{1}{2}\right)^{n+1}+1$ for all $n\geq -1$. Notice that this expression is nothing but the expression for the solution of the non-homogenous linear difference equation $x_{n+1} = 1 - \frac{1}{2}x_n + \frac{1}{2}x_{n-1}$, when initial conditions $(x_{-1},x_0) = (0,0)$ are considered. Therefore, $\lim_{n\rightarrow + \infty} x_{2n} = \frac{2}{3}$, $\lim_{n\rightarrow +\infty} x_{2n+1} = \frac{4}{3}$, and the orbit is asymptotically periodic to the $2$-periodic orbit $\left(\frac{4}{3}, \frac{2}{3}, \frac{4}{3}, \frac{2}{3}, \ldots \right)$.

The above computations for the orbit of $(0,0)$ suggest to do a case study, by distinguishing the different zones in which the initial conditions can be located.

Returning to our problem on the convergence of the orbits generated under $x_{n+1}=1-\frac{1}{2}|x_n| + \frac{1}{2}x_{n-1}$ in an asymptotically periodic form, we present the following result on invariance of two triangles in the plane by $F(x,y) = \left(y, 1 - \frac{1}{2}|y| + \frac{1}{2} x \right)$.

\begin{lemma}
Consider the triangles $$\Delta_{\ell} = \{(x,y): \ 0\leq x,y \leq  2, \ x+y \leq 2 \} $$ and $$\Delta_u = \{(x,y): \ 0\leq x,y \leq  2, \ x+y \geq 2 \}.$$ Then, $F(\Delta_{\ell}) \subset \Delta_{\ell}$ and $F(\Delta_u) \subset \Delta_u$, where $F(x,y)=(y,1-\frac{1}{2}|y|+\frac{1}{2}x)$.
\end{lemma}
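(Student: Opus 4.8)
The plan is to verify directly that each of the two triangles is mapped into itself by exhibiting, for an arbitrary point $(x,y)$ in the triangle, that its image $F(x,y) = (y, 1 - \tfrac{1}{2}|y| + \tfrac{1}{2}x)$ again satisfies the three defining inequalities. Write $(x',y') = F(x,y)$, so $x' = y$ and $y' = 1 - \tfrac{1}{2}|y| + \tfrac{1}{2}x$. Since in both triangles we always have $0 \le x, y \le 2$, the absolute value disappears: $y' = 1 - \tfrac{1}{2}y + \tfrac{1}{2}x$. The first coordinate condition $0 \le x' \le 2$ is immediate because $x' = y$ and $0 \le y \le 2$ by hypothesis. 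So everything reduces to two scalar tasks: bounding $y'$ in $[0,2]$, and checking the sum condition $x' + y' = y + 1 - \tfrac{1}{2}y + \tfrac{1}{2}x = 1 + \tfrac{1}{2}y + \tfrac{1}{2}x$.

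For $\Delta_\ell$ (where $x + y \le 2$): the sum becomes $x' + y' = 1 + \tfrac{1}{2}(x+y) \le 1 + \tfrac{1}{2}\cdot 2 = 2$, which is exactly the sum condition for $\Delta_\ell$; and since $x, y \ge 0$ we also get $x' + y' \ge 1 > 0$, so in particular both coordinates are within reach of the box. For the box condition on $y'$: from $x \ge 0$ and $y \le 2$ we get $y' = 1 + \tfrac{1}{2}(x - y) \ge 1 - 1 = 0$; and $y' \le 2$ follows because $y' = x' + y' - x' \le 2 - 0 = 2$ using $x' = y \ge 0$ together with $x' + y' \le 2$. Hence $(x',y') \in \Delta_\ell$. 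For $\Delta_u$ (where $x + y \ge 2$): symmetrically, $x' + y' = 1 + \tfrac{1}{2}(x+y) \ge 1 + 1 = 2$, giving the sum condition for $\Delta_u$; and since $x + y \ge 2$ with $x, y \le 2$ we have $x, y \ge 0$ automatically. For the box: $y' = 1 + \tfrac{1}{2}(x-y)$ with $x \le 2$ and $y \ge 0$ gives $y' \le 2$; and $y' \ge 0$ follows from $y' = (x'+y') - x' \ge 2 - 2 = 0$ using $x' = y \le 2$ and $x' + y' \ge 2$. Hence $(x',y') \in \Delta_u$.

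I do not anticipate a genuine obstacle here — the computation is short and linear once one observes that the absolute value is inactive on both triangles. The only point requiring a little care is making sure the two bounds on $y'$ are obtained using the \emph{correct} sign information: the lower bound on $y'$ in $\Delta_\ell$ uses $x \ge 0$ and $y \le 2$, while the upper bound is cleanest when derived from the sum inequality rather than from the $x, y$ bounds directly (and symmetrically in $\Delta_u$). One should also note that strict versus non-strict inequalities cause no trouble: the statement only claims $F(\Delta_\ell) \subset \Delta_\ell$ and $F(\Delta_u) \subset \Delta_u$ as inclusions of closed triangles, and all the inequalities above are non-strict throughout. This invariance is exactly what is needed to localize the analysis of the asymptotic $2$-periodicity announced just before the lemma, since a point on the common edge $x + y = 2$ lies in both triangles and its orbit can then be tracked inside whichever triangle one prefers.
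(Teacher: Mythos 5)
Your proof is correct and follows essentially the same route as the paper: drop the absolute value since $y\geq 0$ on both triangles, then verify the three defining inequalities of each triangle directly by elementary estimates. The only cosmetic difference is that you derive the bound $y'\leq 2$ (resp.\ $y'\geq 0$) from the sum condition rather than directly from $0\leq x,y\leq 2$, which changes nothing of substance.
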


\begin{proof}
Let $(x,y) \in \Delta_{\ell}$. Then, $F(x,y) = \left(y, 1 - \frac{1}{2}y + \frac{1}{2} x \right)$, with $0 \leq y \leq 2$, $$1-\frac{1}{2}y+\frac{1}{2}x \geq 1 - \frac{1}{2}y \geq 1 - 1 =0,$$ $$1 - \frac{1}{2}y + \frac{1}{2}x \leq 1 - \frac{1}{2}y + 1 \leq 2 - \frac{1}{2}y \leq 2, $$ and $$y + \left(1-\frac{1}{2}y+\frac{1}{2}x \right) = 1 + \frac{1}{2}x + \frac{1}{2}y \leq 1 + \frac{x+y}{2} \leq 1 + 1 = 2.$$ Therefore, $F(x,y) \in \Delta_{\ell}$.

The proof of the invariance of $\Delta_u$ by $F$ is analogous; now, $$ y + \left(1-\frac{1}{2}y+\frac{1}{2}x\right) = 1 + \frac{x+y}{2} \geq 2.$$
\end{proof}

As a consequence of this lemma, if some iterate of the orbit by $F$ lies in $\Delta_{\ell}$ or $\Delta_u$, then the orbit will remain indefinitely in these triangles. Since the coordinates of the iterates in these zones are always positive, in fact the Lozi map becomes in the linear system $F(x,y) = \left(y, 1 - \frac{1}{2}y + \frac{1}{2}x \right)$ and the solution will be explicitly known.

\begin{proposition}
Let $(x,y) \in \Delta_{\ell} \cup \Delta_u$. Then, $\left(F^n(x,y)\right)_{n\geq 0}$ is asymptotically periodic to some $2$-periodic point $(v,w)$.
\end{proposition}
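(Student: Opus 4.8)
The plan is to exploit the invariance established in the previous lemma in order to reduce the problem to a linear difference equation. Since $(x,y)\in\Delta_\ell\cup\Delta_u$ and each of $\Delta_\ell$, $\Delta_u$ is $F$-invariant, every iterate $F^n(x,y)=(x_{n-1},x_n)$ stays inside $\Delta_\ell\cup\Delta_u\subset[0,2]^2$; in particular $x_n\ge 0$ for all $n\ge -1$, so $|x_n|=x_n$ and the sequence $(x_n)$ actually satisfies the \emph{linear} recurrence $x_{n+1}=1-\tfrac12 x_n+\tfrac12 x_{n-1}$ for every $n\ge 0$, with $(x_{-1},x_0)=(x,y)$.

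Next I would solve this linear equation explicitly. Its characteristic polynomial $\lambda^2+\tfrac12\lambda-\tfrac12=\tfrac12(2\lambda-1)(\lambda+1)$ has roots $\lambda_1=\tfrac12$ and $\lambda_2=-1$; since $1$ is not a root, a constant particular solution is $x_n\equiv 1$, and the general solution is
\begin{equation*}
x_n=1+c_1(\tfrac12)^n+c_2(-1)^n ,
\end{equation*}
where, imposing $x_{-1}=x$ and $x_0=y$, one finds $c_1=\dfrac{x+y-2}{3}$ and $c_2=\dfrac{2y-x-1}{3}$.

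Finally, from this formula $x_n-\bigl(1+c_2(-1)^n\bigr)=c_1(\tfrac12)^n\to 0$ as $n\to\infty$, so the even- and odd-indexed subsequences converge: $x_{2n}\to 1+c_2$ and $x_{2n+1}\to 1-c_2$. Consequently
\begin{equation*}
F^{2n}(x,y)=(x_{2n-1},x_{2n})\longrightarrow(1-c_2,\,1+c_2),\qquad F^{2n+1}(x,y)=(x_{2n},x_{2n+1})\longrightarrow(1+c_2,\,1-c_2).
\end{equation*}
Because $x_n\in[0,2]$ for every $n$ we have $|c_2|\le 1$, hence $v:=1-c_2$ and $w:=1+c_2$ satisfy $0\le v,w\le 2$, $v+w=2$, and a direct check gives $F(v,w)=(w,v)$; thus $(v,w)$ is a genuine $2$-periodic point of $F$ (degenerating to the fixed point $(1,1)$ precisely when $c_2=0$, i.e. $2y-x-1=0$), and $\bigl(F^n(x,y)\bigr)_{n\ge 0}$ is asymptotically periodic to it.

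The argument is essentially mechanical once the reduction to the linear regime is in place; the only point requiring a little care is verifying that the candidate limit $(v,w)$ is an admissible $2$-periodic point, i.e. that $|c_2|\le 1$ so that $0\le v,w\le2$ — and this is immediate from the boundedness of the orbit inside $[0,2]^2$, which is exactly what the invariance lemma provides. I do not expect any genuine obstacle beyond this bookkeeping.
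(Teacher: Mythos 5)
Your proof is correct and follows essentially the same route as the paper: both use the invariance of $\Delta_{\ell}\cup\Delta_u$ to replace $|x_n|$ by $x_n$, solve the resulting non-homogeneous linear recurrence with characteristic roots $-1$ and $\tfrac12$, and read off convergence of the even and odd subsequences to a $2$-periodic point with coordinates summing to $2$. Your extra check that $|c_2|\le 1$ (so the limit lies in $[0,2]^2$) and your remark on the degenerate case $c_2=0$ are small refinements the paper omits.
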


\begin{proof}
Due to the invariance of $\Delta_{\ell} \cup \Delta_u$, if we denote the $n$-th iterate of $(x,y)$ under $F$ by $F^n(x,y) = (x_n,y_n), n\geq 0$, it is clear that $y_{n+1}= 1 - \frac{1}{2}y_n + \frac{1}{2}y_{n-1}$, with $y_{-1} = y$, $y_0 = 1 - \frac{1}{2}y + \frac{1}{2}x$. The general solution of the non-homogenous linear difference equation is given by $y_n = A(-1)^n + B\left(\frac{1}{2}\right)^n+1$, for arbitrary constants by $A,B \in \mathbb{R}$.

If we impose the initial conditions $y_{-1} = y$, $y_0 = 1 - \frac{1}{2}y + \frac{1}{2}x$, we obtain the solution $$y_n = \frac{2y-x-1}{3}(-1)^n + \frac{x+y-2}{3}\left(\frac{1}{2}\right)^n  + 1.$$ Since $y_{2n} \rightarrow \frac{2y-x-1}{3} + 1 = \frac{2y-x+2}{3}$ and $y_{2n+1} \rightarrow -\frac{2y-x-1}{3} + 1 = \frac{-2y+x+4}{3}$, we conclude that the orbit converge to the $2$-periodic sequence $$\left(\ldots, \frac{2y-x+2}{3}, \frac{-2y+x+4}{3}, \frac{2y-x+2}{3}, \frac{-2y+x+4}{3}, \ldots \right).$$
\end{proof}

Bearing in mind this proposition, in order to prove that all the orbits converge to periodic solutions of period $2$, it suffices to prove that for any initial condition $(x,y)$ there exists a positive integer $N$ such that $F^N(x,y) \in \Delta_{\ell} \cup \Delta_u$.

For this task, we are going to prove that for any square $$C_{m,n} := [2m,2m+2] \times [2n,2n+2], \ \ \text{for} \ m,n \in \mathbb{Z},$$ \noindent either there exists $N\in \mathbb{Z}$ such that $F^N(C_{m,n}) \subset C_{0,0}$ or $\cap_{p\geq 0} F^p(C_{m,n}) = \{(0,2),(0,2)\}$. In both cases, we will deduce that for any $(x,y) \in \mathbb{R}^2$, it holds that $(F^n(x,y))_{n\geq 0}$ is asymptotically periodic to some $2$-periodic point $(v,w)$.

To do it, we will use the following facts. In their proofs, we will use the notation $$H_u = \{(x,y):y\geq 0\} \ \ \text{and} \ \ H_{\ell}=\{(x,y):y\leq 0\}.$$

\begin{lemma}
The map $F$ transforms the square $C_{m,n}$ into a parallelogram and the vertices of $C_{m,n}$ go to the vertices of that parallelogram.
\end{lemma}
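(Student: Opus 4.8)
The plan is to analyze the piecewise-linear map $F(x,y) = (y, 1-\tfrac{1}{2}|y|+\tfrac{1}{2}x)$ by tracking what it does to the corner points of $C_{m,n}$, and to exploit the fact that $F$ is affine on each of the two half-planes $H_u$ and $H_{\ell}$. The square $C_{m,n}$ has four vertices $(2m,2n)$, $(2m+2,2n)$, $(2m,2n+2)$, $(2m+2,2n+2)$. Each of these vertices has its second coordinate $2n$ or $2n+2$, both of which are either $\geq 0$ or $\leq 0$ simultaneously (since $n\in\mathbb{Z}$ means $2n$ and $2n+2$ are both nonnegative when $n\geq 0$ and both nonpositive when $n\leq -1$); thus \emph{all four vertices of $C_{m,n}$ lie in the same half-plane} $H_u$ or $H_{\ell}$. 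Hence $F$ restricted to the vertex set is the restriction of a single affine map $L$ — namely $L(x,y)=(y,1-\tfrac{1}{2}y+\tfrac{1}{2}x)$ on $H_u$, or $L(x,y)=(y,1+\tfrac{1}{2}y+\tfrac{1}{2}x)$ on $H_{\ell}$.

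The key step is then to observe that an affine map sends the square $C_{m,n}$ onto a parallelogram, and sends the vertices of the square to the vertices of that parallelogram (provided the affine map is non-degenerate on the relevant region). First I would check that the linear parts $\begin{pmatrix}0&1\\\tfrac12&-\tfrac12\end{pmatrix}$ and $\begin{pmatrix}0&1\\\tfrac12&\tfrac12\end{pmatrix}$ are both invertible (their determinants are $-\tfrac12$ and $\tfrac12$, hence nonzero), so each affine branch is a bijection of the plane. An invertible affine map carries the convex hull of four points onto the convex hull of their images, and carries extreme points to extreme points; since the four vertices of a (nondegenerate) square are exactly the extreme points of $C_{m,n}$, their images under $L$ are exactly the extreme points — i.e. the vertices — of the image parallelogram $L(C_{m,n})=F(C_{m,n})$. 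One should also note that the image really is a parallelogram and not a degenerate segment, which again follows from invertibility of the linear part (the two edge vectors of the square, $(2,0)$ and $(0,2)$, map to linearly independent vectors).

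The main (and only real) obstacle is the bookkeeping that all four vertices of $C_{m,n}$ genuinely lie in a common half-plane so that a single affine branch governs the whole square; this is where the specific geometry of the grid — squares of side $2$ aligned to even integers — matters, because a square straddling the $x$-axis would be cut by the fold line $y=0$ and $F$ would bend it rather than map it affinely. Since $C_{m,n}=[2m,2m+2]\times[2n,2n+2]$ has $y$-range $[2n,2n+2]$, and for $n\geq 0$ this interval lies in $\{y\geq 0\}$ while for $n\leq -1$ it lies in $\{y\leq 0\}$, the square is never cut, and the argument goes through uniformly. I would present the two cases ($n\geq 0$ and $n\leq -1$) in parallel, compute the four image points explicitly in each case to exhibit the parallelogram concretely, and conclude that vertices map to vertices.
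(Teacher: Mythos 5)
Your argument is correct and is essentially the same as the paper's: the paper's proof is a one-line appeal to the linearity (affineness) of $F$ on each half-plane together with the fact that $C_{m,n}$ lies entirely in $H_u$ or $H_{\ell}$, which is exactly the observation you make and then justify in more detail (invertibility of the linear parts, extreme points to extreme points). Nothing is missing.
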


\begin{proof}
It is a consequence of the linearity of $F$ and the fact of being located the square $C_{m,n}$ entirely on the upper half-plane $H_u$ or in the lower half plane $H_{\ell}$.
\end{proof}

\begin{lemma}
Let $A(C_{m,n})$ and $A(F(C_{m,n}))$ be the areas of the square $C_{m,n}$ and its image by $F$, respectively. Then $A\left(F(C_{m,n})\right) = \frac{1}{2} A(C_{m,n})$.
\end{lemma}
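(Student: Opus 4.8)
The plan is to reuse the key structural fact behind the preceding lemma: the square $C_{m,n}=[2m,2m+2]\times[2n,2n+2]$ lies entirely in the upper half-plane $H_u$ or in the lower half-plane $H_\ell$, since the interval $[2n,2n+2]$ with $n\in\mathbb{Z}$ is contained either in $[0,+\infty)$ or in $(-\infty,0]$. On such a square the term $|y|$ is replaced by the \emph{linear} expression $y$ (when $n\geq 0$) or $-y$ (when $n\leq -1$), so that the restriction of $F$ to $C_{m,n}$ is an affine map of the plane. Since an affine map scales planar Lebesgue measure by the absolute value of the determinant of its linear part, it suffices to compute this determinant in each of the two cases.

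First I would treat the case $n\geq 0$: here $y\geq 0$ throughout $C_{m,n}$, so $F(x,y)=\left(y,\,1-\frac{1}{2}y+\frac{1}{2}x\right)$, whose linear part is $\begin{pmatrix} 0 & 1 \\ \frac{1}{2} & -\frac{1}{2} \end{pmatrix}$ with determinant $-\frac{1}{2}$. In the case $n\leq -1$ we have $y\leq 0$ throughout $C_{m,n}$, so $F(x,y)=\left(y,\,1+\frac{1}{2}y+\frac{1}{2}x\right)$, with linear part $\begin{pmatrix} 0 & 1 \\ \frac{1}{2} & \frac{1}{2} \end{pmatrix}$ and determinant again $-\frac{1}{2}$. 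In both cases the absolute value of the determinant is $\frac{1}{2}$, and therefore $A\left(F(C_{m,n})\right)=\frac{1}{2}\,A(C_{m,n})$.

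Alternatively, and perhaps more in the spirit of the previous lemma (which already identifies $F(C_{m,n})$ as a parallelogram whose vertices are the images of the four corners of $C_{m,n}$), one may argue directly: push the two edge vectors $(2,0)$ and $(0,2)$ of $C_{m,n}$ through the linear part of $F$ to obtain $(0,1)$ and $(2,-1)$ (respectively $(0,1)$ and $(2,1)$), and observe that the parallelogram they span has area $|0\cdot(-1)-1\cdot 2|=2=\frac{1}{2}\cdot 4=\frac{1}{2}\,A(C_{m,n})$. Either way there is no genuine obstacle; the only point requiring (trivial) care is to record that $C_{m,n}$ never straddles the $x$-axis, which is precisely what makes the restriction of $F$ to $C_{m,n}$ affine.
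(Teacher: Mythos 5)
Your proposal is correct and follows essentially the same route as the paper: both arguments reduce to observing that $C_{m,n}$ lies entirely in one half-plane, so $F$ restricted to it is affine, and then computing that the determinant of its linear part is $-\frac{1}{2}$ in either case (the paper phrases this via the change-of-variables formula for the area integral, which is the same computation). Your alternative via the image of the edge vectors is just a restatement of the same determinant calculation.
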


\begin{proof}
In this case,  $A\left(F(C_{m,n})\right) = \int \int_{C_{m,n}} \left|\frac{\partial(x,y) }{\partial(u,v)}\right| du \ dv$, where $\left|\frac{\partial(x,y)}{\partial(u,v)}\right|$ is the absolute value of the determinant $\left|\begin{matrix} \frac{\partial x}{\partial u} & \frac{\partial x}{\partial v} \\ \frac{\partial y}{\partial u} & \frac{\partial y}{\partial v} \end{matrix} \right|$, with $(x(u,v), y(u,v)) = \left(v, 1 - \frac{1}{2}v + \frac{1}{2}u \right)$ or $(x(u,v), y(u,v)) = \left(v, 1 + \frac{1}{2}v + \frac{1}{2}u \right)$ depending on whether the square is located or not in the upper half-plane $H_u$. In both cases, $\frac{\partial(x,y)}{\partial(u,v)} = \left|\begin{matrix} \frac{\partial x}{\partial u} & \frac{\partial x}{\partial v} \\ \frac{\partial y}{\partial u} & \frac{\partial y}{\partial v} \end{matrix} \right| = \left|\begin{matrix} 0 & 1 \\ \frac{1}{2} & \pm \frac{1}{2} \end{matrix} \right| = -\frac{1}{2}$, and the result follows.
\end{proof}

Concerning the iteration of triangles located entirely on $H_u$ or $H_{\ell}$, again we have that any triangle $T \subset H_u$  (respectively, $T\subset H_{\ell}$) is transformed in a new triangle whose vertices are the vertices of $T$ by the action of $F$, and additionally $A(F(T)) = \frac{1}{2}A(T)$.

From now on, our strategy consists in proving:

\begin{itemize}
    \item[\textbf{(a)}] The region $R_1$ surrounding $C_{0,0}$, including $C_{0,0}$ itself, is invariant, that is, $R_1 := \bigcup_{i,j\in\{-1,0,1\}} C_{i,j}=[-2,4]\times [-2,4]$ is invariant; after that, to prove that either the images $F^N(C_{i,j})$ are included in $C_{0,0}$ for some positive integer $N$ or the intersection of images not included in $C_{0,0}$ converges to the $2$-periodic orbit $\{(0,2), (0,2)\}$.
    \item[\textbf{(b)}] The image of any square $C_{i,j}$ for $i \geq 2$ or $j\geq 2$ is finally entirely contained in $R_1$. 
\end{itemize}

For \textbf{(a)}, we need to use the previous lemmas concerning the contraction of areas and the transformation of triangles and parallelograms in the same type of figures whenever the geometric object is included in $H_{\ell}$ or $H_u$.

As a first step, we analyze the evolution of the segment $$S_{\varepsilon,0} = \{(x,0):2\leq x \leq 2+\varepsilon \}, \,\,\varepsilon \in [0,1].$$

\begin{lemma} \label{Lemma_S1}
 Given $\varepsilon \in [0,1]$, consider $F^n(S_{\varepsilon,0}), n \geq 0$. Then, there exists $N$ such that $F^N(S_{\varepsilon}) \subset C_{0,0}$.
\end{lemma}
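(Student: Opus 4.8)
The plan is to reduce everything to two facts: that $F$ restricts to an affine (and injective) map on each of the closed half-planes $H_u=\{y\ge 0\}$ and $H_\ell=\{y\le 0\}$, and that one endpoint of $S_{\varepsilon,0}$ is $2$-periodic. Indeed $F(2,0)=(0,2)$ and $F(0,2)=(2,0)$, so $(2,0)$ lies on a $2$-cycle contained in $C_{0,0}$, while $S_{\varepsilon,0}$ is the straight segment with endpoints $(2,0)$ and $q_0:=(2+\varepsilon,0)$. Hence it suffices to follow the orbit $q_n:=F^n(q_0)$ of the moving endpoint and to know that the image of the whole segment is, at each step, simply the segment joining the images of its two endpoints.

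The first step is to prove, by induction on $n$, that $F^{n}(S_{\varepsilon,0})$ is the straight segment joining $F^n(2,0)\in\{(2,0),(0,2)\}$ to $q_n$, and that this segment lies entirely in $H_u$ or entirely in $H_\ell$. The induction step uses only that the two endpoints have $y$-coordinates of one and the same sign (possibly zero), so the segment does not meet the interior of the fold line $\{y=0\}$; on the corresponding half-plane $F$ is affine and injective, hence it sends the segment onto the segment joining the two image points, which again lies in a single half-plane. For the explicit computation I would use $F(x,y)=(y,1+\tfrac{x-y}{2})$ on $H_u$ and $F(x,y)=(y,1+\tfrac{x+y}{2})$ on $H_\ell$. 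Composing these two branches shows that near the corner $(2,0)$ the second iterate is the affine map $F^2(x,y)=\bigl(1+\tfrac{x+y}{2},\ \tfrac12-\tfrac{x}{4}+\tfrac{y}{4}\bigr)$, which in the shifted coordinates $u=x-2,\ v=y$ is the linear map with matrix $M=\left(\begin{smallmatrix}1/2&1/2\\-1/4&1/4\end{smallmatrix}\right)$, a contracting spiral (its eigenvalues $\tfrac{3\pm i\sqrt7}{8}$ have modulus $\tfrac12$).

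Applying $M$ five times to $(u_0,v_0)=(\varepsilon,0)$ gives $(u_5,v_5)=\bigl(-\tfrac{17\varepsilon}{512},\tfrac{11\varepsilon}{1024}\bigr)$, that is $q_{10}=\bigl(2-\tfrac{17\varepsilon}{512},\ \tfrac{11\varepsilon}{1024}\bigr)$; along the way one checks from the successive values that $q_{2k}$ has $y\le 0$ and $q_{2k+1}$ has $y\ge 0$ for $k=0,\dots,4$ (with $\varepsilon\le 1$ there is ample room), so the $F^2$-formula above legitimately applies throughout the orbit. Since $\varepsilon\in[0,1]$, both $2-\tfrac{17\varepsilon}{512}$ and $\tfrac{11\varepsilon}{1024}$ lie in $[0,2]$, so $q_{10}\in C_{0,0}$; the other endpoint of $F^{10}(S_{\varepsilon,0})$ is $F^{10}(2,0)=(2,0)\in C_{0,0}$ because $10$ is even; and $C_{0,0}$ is convex. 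Therefore $F^{10}(S_{\varepsilon,0})\subset C_{0,0}$, and the lemma holds with $N=10$ (and with $N=0$ when $\varepsilon=0$).

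The only delicate point is the sign bookkeeping: at each of the first ten iterates one must confirm that the moving endpoint $q_n$ sits in the half-plane dictated by the previous step, so that the correct linear branch of $F$ is applied and the image stays an honest segment. This is routine — the displacement of $q_n$ from the nearby $2$-periodic point stays small and keeps turning under the spiral $M$, and the hypothesis $\varepsilon\le 1$ comfortably keeps the whole orbit segment inside the strip $|y|\le 1$ — but it is the single place where that hypothesis is genuinely used.
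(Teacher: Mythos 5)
Your proof is correct and follows essentially the same route as the paper: the paper likewise reduces the lemma to tracking the free endpoint $(2+\varepsilon,0)$ through ten iterates (your five applications of $M$ reproduce exactly its even iterates, landing at $\bigl(2-\tfrac{17\varepsilon}{512},\tfrac{11\varepsilon}{1024}\bigr)\in C_{0,0}$), while noting that each image of the segment stays in a single half-plane. You merely make explicit the sign bookkeeping and the affine-branch justification that the paper states without detail.
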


\begin{proof}
It suffices to study the evolution of the endpoints of $S_{\varepsilon,0}$. Realize that $F^n(S_{\varepsilon,0})$ are lines lying entirely in either $H_{\ell}$ or $H_u$. Then, $$(2+\varepsilon, 0) \xrightarrow{F} \left(0,2+\frac{\varepsilon}{2}\right) \xrightarrow{F} \left( 2 + \frac{\varepsilon}{2}, -\frac{\varepsilon}{4} \right) \xrightarrow{F} \left(-\frac{\varepsilon}{4}, 2 + \frac{\varepsilon}{8}\right) \xrightarrow{F} \left(2 + \frac{\varepsilon}{8}, -\frac{3}{16}\varepsilon \right) $$ $$\xrightarrow{F} \left(-\frac{3}{16}\varepsilon, 2 - \frac{\varepsilon}{32}\right) \xrightarrow{F} \left( 2 -\frac{\varepsilon}{2}, - \frac{5}{64}\varepsilon \right) \xrightarrow{F} \left(- \frac{5}{64}\varepsilon , 2- \frac{7}{128}\varepsilon  \right) $$ $$\xrightarrow{F} \left(2- \frac{7}{128}\varepsilon, -\frac{3}{256}\varepsilon \right) \xrightarrow{F} \left(-\frac{3}{256}\varepsilon, 2 - \frac{17}{512}\varepsilon \right) \xrightarrow{F} \left(2 - \frac{17}{512}\varepsilon, \frac{11}{1024}\varepsilon \right). $$

At this point, we notice that $F^{10}(2+\varepsilon,0) = \left(2 - \frac{17}{512}\varepsilon, \frac{11}{1024}\varepsilon \right) \in C_{0,0}$, which ends the proof.
\end{proof}

In a similar way, we will keep studying the behavior of the lines $$S_{\varepsilon,2} = \{(x,2): -\varepsilon \leq x \leq 0 \}, \,\,\varepsilon \in [0,1].$$

\begin{lemma} \label{Lemma_S2}
 Given $\varepsilon \in [0,1]$, consider $F^n(S_{\varepsilon,2}), n\geq 0$. Then, there exists $N$ such that $F^N(S_{\varepsilon,2}) \subset C_{0,0}$.
\end{lemma}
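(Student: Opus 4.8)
The plan is to imitate the proof of Lemma~\ref{Lemma_S1}. Since $F$ agrees with an affine map on each of the closed half-planes $\overline{H_u}=\{y\ge 0\}$ and $\overline{H_\ell}=\{y\le 0\}$, whenever an iterate $F^k(S_{\varepsilon,2})$ lies entirely inside one of these two half-planes the image $F^{k+1}(S_{\varepsilon,2})$ is again a genuine line segment, namely the one joining the images of the two endpoints. Hence it suffices to follow the orbits of the endpoints $(0,2)$ and $(-\varepsilon,2)$ of $S_{\varepsilon,2}$, recording at each stage which half-plane contains the current segment, until both endpoints simultaneously lie in the square $C_{0,0}=[0,2]\times[0,2]$; convexity of $C_{0,0}$ then forces the whole segment $F^N(S_{\varepsilon,2})$ inside $C_{0,0}$.

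First I would note that $(0,2)$ is a $2$-periodic point, $F(0,2)=(2,0)$ and $F(2,0)=(0,2)$, with both $(0,2),(2,0)\in C_{0,0}$, so the orbit of this endpoint never leaves $C_{0,0}$. For the other endpoint, keeping track of the sign of the second coordinate one computes
\begin{align*}
(-\varepsilon,2)&\xrightarrow{F}\left(2,-\tfrac{\varepsilon}{2}\right)\xrightarrow{F}\left(-\tfrac{\varepsilon}{2},2-\tfrac{\varepsilon}{4}\right)\xrightarrow{F}\left(2-\tfrac{\varepsilon}{4},-\tfrac{\varepsilon}{8}\right)\\
&\xrightarrow{F}\left(-\tfrac{\varepsilon}{8},2-\tfrac{3\varepsilon}{16}\right)\xrightarrow{F}\left(2-\tfrac{3\varepsilon}{16},\tfrac{\varepsilon}{32}\right).
\end{align*}
For every $\varepsilon\in[0,1]$ the final point lies in $C_{0,0}$, while the intermediate segments $F^k(S_{\varepsilon,2})$, $0\le k\le 4$, lie alternately in $\overline{H_u}$ (for $k$ even, the second coordinates of their endpoints being $\ge \tfrac{7}{4}$) and in $\overline{H_\ell}$ (for $k$ odd), so that $F^5$ is affine on $S_{\varepsilon,2}$. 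Since $F^5(0,2)=(2,0)\in C_{0,0}$ as well, $F^5(S_{\varepsilon,2})$ has both endpoints in the convex set $C_{0,0}$, whence $F^5(S_{\varepsilon,2})\subset C_{0,0}$ and $N=5$ works (the degenerate case $\varepsilon=0$ is trivial, as then $S_{0,2}=\{(0,2)\}\subset C_{0,0}$).

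The only point requiring care, exactly as in Lemma~\ref{Lemma_S1}, is this sign bookkeeping: one must verify at each of the five steps that the segment in play does not straddle the line $y=0$, for otherwise $F$ fails to be affine on it and its image need not even be a segment. Here that is immediate from the explicit endpoint coordinates displayed above, and it is precisely where the hypothesis $\varepsilon\le 1$ is used. No quantitative estimate beyond these elementary sign checks is needed, so I expect this lemma, like Lemma~\ref{Lemma_S1}, to be a routine bookkeeping exercise rather than a genuine obstacle.
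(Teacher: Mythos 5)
Your proposal is correct and follows essentially the same route as the paper: track the orbit of the endpoint $(-\varepsilon,2)$ through five iterates to land at $\left(2-\frac{3}{16}\varepsilon,\frac{\varepsilon}{32}\right)\in C_{0,0}$, using that $(0,2)$ is $2$-periodic and that each intermediate segment stays in a single half-plane so $F$ acts affinely on it. Your explicit sign bookkeeping and the convexity remark merely spell out what the paper leaves implicit; the computations agree with the paper's.
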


\begin{proof}
 In this case, the iterates of the endpoint $(-\varepsilon,2)$ are $$(-\varepsilon, 2) \xrightarrow{F} \left(2, -\frac{\varepsilon}{2}\right) \xrightarrow{F} \left( -\frac{\varepsilon}{2}, 2 - \frac{\varepsilon}{4}\right)\xrightarrow{F} \left(2 - \frac{\varepsilon}{4}, -\frac{\varepsilon}{8} \right) $$ $$\xrightarrow{F} \left(-\frac{\varepsilon}{8}, 2 - \frac{3}{16}\varepsilon \right) \xrightarrow{F} \left(2 - \frac{3}{16}\varepsilon, \frac{\varepsilon}{32} \right).$$ Since $\left(2 - \frac{3}{16}\varepsilon, \frac{\varepsilon}{32} \right) \in C_{0,0}$, we finish the proof.
\end{proof}

As a second step, we are going to analyze the behavior of $F$ under appropriate neighbourhoods of $(2,0)$ and $(0,2)$. The proof is straightforward and we omit it. Instead, we encourage the reader to follow the reasoning via Figure \ref{Figure_a0.5}.

\begin{figure}[ht] 
\includegraphics[scale=0.5]{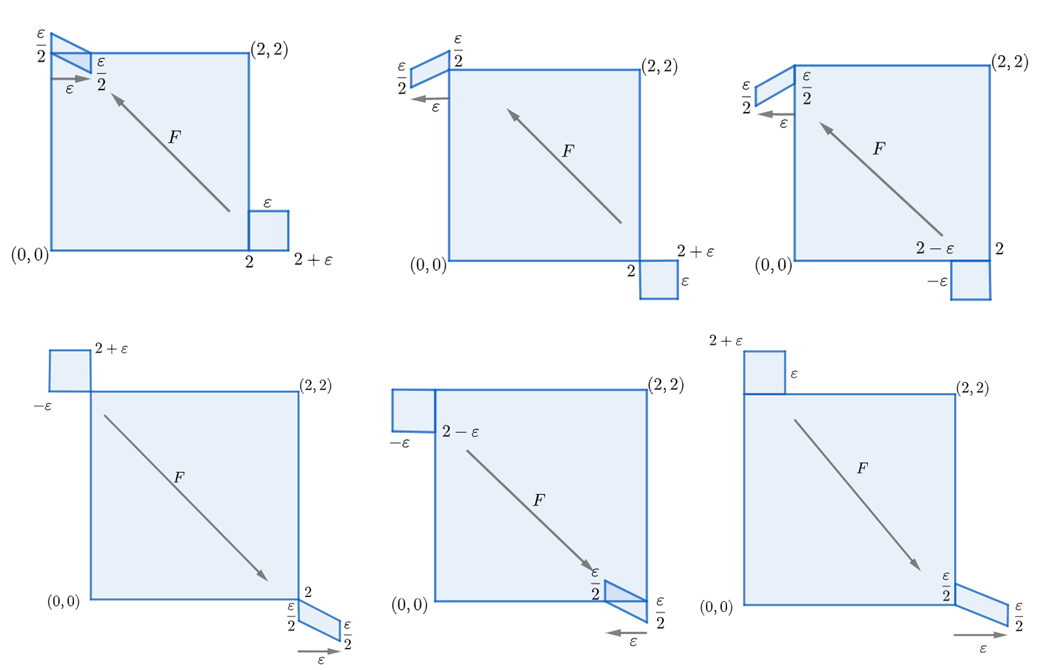}
\caption{Evolution of neighbourhoods $Q_{j,\varepsilon}$ of $(2,0)$ (top) and neighbourhoods $W_{j,\varepsilon}$$(0,2)$ (bottom), $j\in\{1,2,3\}$, in Lemma~\ref{Lemma_S3}. }
\label{Figure_a0.5}
\end{figure}

\begin{lemma} \label{Lemma_S3}
 Let $\varepsilon \in [0,1]$. Consider the squares $Q_{1,\varepsilon} = [2,2+\varepsilon] \times [0,\varepsilon]$, $Q_{2,\varepsilon} = [2,2+\varepsilon] \times [-\varepsilon,0]$ and $Q_{3,\varepsilon} = [2-\varepsilon,2] \times [-\varepsilon,0]$. Then:
\begin{itemize}
    \item[\textbf{(a.1)}] $F(Q_{1,\varepsilon}) \subset C_{0,0}\, \cup\, T_{1,\varepsilon}$, where $T_{1,\varepsilon}$ is the triangle with vertices $(0,2)$, $\left(0, 2 +\frac{\varepsilon}{2}\right)$, $(\varepsilon, 2)$.
    \item[\textbf{(a.2)}] $F(Q_{2,\varepsilon})$ is the parallelogram with vertices $(0,2), \left(0, 2 +\frac{\varepsilon}{2}\right)$, $(-\varepsilon, 2)$, $\left(-\varepsilon, 2 - \frac{\varepsilon}{2} \right)$.
    \item[\textbf{(a.3)}] $F(Q_{3,\varepsilon})$ is the parallelogram with vertices $(0,2), \left(0, 2 -\frac{\varepsilon}{2}\right)$, $(-\varepsilon, 2 -\frac{\varepsilon}{2})$, $\left(-\varepsilon, 2 - \varepsilon \right)$.
\end{itemize}
Consider the squares $W_{1,\varepsilon} = [-\varepsilon,0] \times [2,2+\varepsilon]$, $W_{2,\varepsilon} = [-\varepsilon,0] \times [2-\varepsilon, 2]$ and $W_{3,\varepsilon} = [0,\varepsilon] \times [2,2+\varepsilon]$. Then:
\begin{itemize}
    \item[\textbf{(b.1)}] $F(W_{1,\varepsilon})$ is the parallelogram with vertices $(2,0),$ $\left(2,-\frac{\varepsilon}{2}\right)$, $(2+\varepsilon, - \varepsilon)$, $\left(2+\varepsilon, - \frac{\varepsilon}{2} \right)$.
    \item[\textbf{(b.2)}] $F(W_{2,\varepsilon}) \subset C_{0,0}\, \cup\, T_{2,\varepsilon}$, where $T_{2,\varepsilon}$ is the triangle with vertices $(2-\varepsilon, 0)$, $(2,0)$, $\left(2, -\frac{\varepsilon}{2} \right)$.
    \item[\textbf{(b.3)}] $F(W_{3,\varepsilon})$ is the parallelogram with vertices $(2,0),$ $\left(2,\frac{\varepsilon}{2}\right)$, $(2+\varepsilon, 0)$, $\left(2+\varepsilon, - \frac{\varepsilon}{2} \right)$.
\end{itemize}
\end{lemma}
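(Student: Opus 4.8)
The plan is to reduce everything to the piecewise-linearity of $F$. Recall that $F$ restricts to the affine map $(x,y)\mapsto\bigl(y,\,1-\tfrac12 y+\tfrac12 x\bigr)$ on the closed half-plane $H_u=\{y\ge 0\}$ and to $(x,y)\mapsto\bigl(y,\,1+\tfrac12 y+\tfrac12 x\bigr)$ on $H_\ell=\{y\le 0\}$, and (by the same argument as in the lemma on the image of $C_{m,n}$) any square contained in one of these half-planes is carried onto the parallelogram spanned by the images of its four corners. Thus the proof consists of: (i) checking that each of the six squares lies in a single half-plane; (ii) applying the relevant affine branch to its four corners; (iii) reading off the resulting parallelogram; and, for (a.1) and (b.2), (iv) splitting that parallelogram along a coordinate line into a piece inside $C_{0,0}$ and the advertised triangle.

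First I would record the half-plane memberships: for $\varepsilon\in[0,1]$ one has $Q_{1,\varepsilon}\subset H_u$ (ordinates in $[0,\varepsilon]$), $Q_{2,\varepsilon},Q_{3,\varepsilon}\subset H_\ell$ (ordinates in $[-\varepsilon,0]$), and $W_{1,\varepsilon},W_{2,\varepsilon},W_{3,\varepsilon}\subset H_u$ (ordinates in $[2-\varepsilon,2+\varepsilon]\subset[0,\infty)$). Hence in every case $F$ acts through a single affine branch and the image is the parallelogram on the images of the corners. I would then carry out the six short vertex computations; for instance the corners of $Q_{1,\varepsilon}$ map, under the $H_u$-branch, as $(2,0)\mapsto(0,2)$, $(2+\varepsilon,0)\mapsto\bigl(0,2+\tfrac{\varepsilon}{2}\bigr)$, $(2,\varepsilon)\mapsto\bigl(\varepsilon,2-\tfrac{\varepsilon}{2}\bigr)$, $(2+\varepsilon,\varepsilon)\mapsto(\varepsilon,2)$, and analogously for the others, as displayed in Figure~\ref{Figure_a0.5}. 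With these computations in hand, statements (a.2), (a.3), (b.1), (b.3) are immediate, since each merely names the parallelogram just produced.

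For the two splitting statements one further remark is needed. In (a.1) the parallelogram $F(Q_{1,\varepsilon})$ has vertices $(0,2)$, $\bigl(0,2+\tfrac{\varepsilon}{2}\bigr)$, $(\varepsilon,2)$, $\bigl(\varepsilon,2-\tfrac{\varepsilon}{2}\bigr)$, and the line $y=2$ meets its boundary precisely at the two vertices $(0,2)$ and $(\varepsilon,2)$; consequently the parallelogram is the union of its part with $y\le 2$, which lies in $[0,\varepsilon]\times[2-\tfrac{\varepsilon}{2},2]\subset C_{0,0}$, and its part with $y\ge 2$, which is exactly the triangle $T_{1,\varepsilon}$. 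In (b.2) the parallelogram $F(W_{2,\varepsilon})$ has vertices $(2-\varepsilon,0)$, $\bigl(2-\varepsilon,\tfrac{\varepsilon}{2}\bigr)$, $(2,0)$, $\bigl(2,-\tfrac{\varepsilon}{2}\bigr)$, the axis $y=0$ cuts it at the two vertices $(2-\varepsilon,0)$ and $(2,0)$, the part with $y\ge 0$ lies in $[2-\varepsilon,2]\times[0,\tfrac{\varepsilon}{2}]\subset C_{0,0}$, and the part with $y\le 0$ is the triangle $T_{2,\varepsilon}$; this gives the stated inclusions.

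The argument is pure bookkeeping and presents no genuine obstacle. The only points deserving a little care are, for (a.1) and (b.2), verifying that the dividing line ($y=2$, respectively $y=0$) passes through two vertices of the parallelogram rather than through the interior of an edge — so that the two pieces really are a sub-parallelogram contained in $C_{0,0}$ and the stated triangle — and, in the $W$-cases, remembering that the branch of $F$ to be used is the one determined by the source square (which lies in $H_u$) even though the image dips below the $x$-axis. Accordingly I would present the proof as a compact list of the six vertex images plus the two splitting remarks, or, following the authors' suggestion, simply refer the reader to Figure~\ref{Figure_a0.5}.
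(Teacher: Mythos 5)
Your proposal is correct and is precisely the computation the paper has in mind: the authors omit the proof as straightforward and point to Figure~\ref{Figure_a0.5}, and your argument (each square lies in a single half-plane, so $F$ acts affinely and sends the four corners to the vertices of the image parallelogram, with the extra diagonal-splitting observation along $y=2$ and $y=0$ for (a.1) and (b.2)) supplies exactly the missing bookkeeping. All vertex images and the inclusions in $C_{0,0}$ check out for $\varepsilon\in[0,1]$.
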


We are now in a position to prove that all the orbits of the region $R_1 = [-2,4] \times [-2,4]$ enter the square $C_{0,0}$.

\begin{proposition}\label{P:inicio}
For any point $(x,y)$ in the region $R_1$, its orbit $(F^n(x,y))_n$ eventually enters in the square $C_{0,0}$.
\end{proposition}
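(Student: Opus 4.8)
The plan is to prove the claim in three stages. First I would show that $R_1=[-2,4]\times[-2,4]$ is forward invariant under $F$. Since $F$ is affine on each half-plane and the line $y=0$ cuts $R_1$ into $[-2,4]\times[0,4]\subseteq H_u$ and $[-2,4]\times[-2,0]\subseteq H_\ell$, this reduces to two elementary estimates: on $H_u$ one has $F(x,y)=(y,\,1-\tfrac{1}{2}y+\tfrac{1}{2}x)$, whose two coordinates are monotone in $x$ and $y$, so $F([-2,4]\times[0,4])\subseteq[0,4]\times[-2,3]\subseteq R_1$; and on $H_\ell$, $F(x,y)=(y,\,1+\tfrac{1}{2}y+\tfrac{1}{2}x)$ gives $F([-2,4]\times[-2,0])\subseteq[-2,0]\times[-1,3]\subseteq R_1$. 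Hence $F(R_1)\subseteq R_1$.

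Second, I would track the eight peripheral squares $C_{i,j}$, $(i,j)\in\{-1,0,1\}^2\setminus\{(0,0)\}$, under iteration. The tools are the lemmas already proved: a polygon lying entirely in $H_u$ or $H_\ell$ is mapped by $F$ to a polygon whose vertices are the images of the original vertices, with area halved; and by the first stage all iterates remain in $R_1$. Computing the successive parallelograms and recording how they meet the line $y=0$ and the square $C_{0,0}$, one sees that after a bounded number of steps the image of each $C_{i,j}$ lies in $C_{0,0}$ together with a small residual piece clustered around the $2$-periodic points $(2,0)$ and $(0,2)$; this residual piece is covered by one of the squares $Q_{j,\varepsilon}$, $W_{j,\varepsilon}$ (or, on their edges, the segments $S_{\varepsilon,0}$, $S_{\varepsilon,2}$) of \lemref{Lemma_S3} for a suitable $\varepsilon\in(0,1]$.

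Third, I would dispose of the residual pieces via \lemref{Lemma_S3}, \lemref{Lemma_S1} and \lemref{Lemma_S2}: each of $Q_{1,\varepsilon}$ and $W_{2,\varepsilon}$ is mapped into $C_{0,0}$ plus a thin triangle $T_{1,\varepsilon}\subset W_{3,\varepsilon}$, $T_{2,\varepsilon}\subset Q_{3,\varepsilon}$ respectively; the remaining neighbourhoods $Q_{2,\varepsilon}$, $Q_{3,\varepsilon}$, $W_{1,\varepsilon}$, $W_{3,\varepsilon}$ are sent to parallelograms that re-enter the same configuration; and the segments $S_{\varepsilon,0}$, $S_{\varepsilon,2}$ fall into $C_{0,0}$ after finitely many steps by \lemref{Lemma_S1} and \lemref{Lemma_S2}. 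Combining the three stages gives, for every $(x,y)\in R_1$, an integer $N\ge 0$ with $F^N(x,y)\in C_{0,0}$.

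The hard part is the casework in the second stage: the images of the eight peripheral squares are parallelograms crossing the line $y=0$ in several distinct patterns, so each must be partitioned with care into the part that enters $C_{0,0}$ and the part funnelled toward the orbit $\{(2,0),(0,2)\}$, and one has to confirm the latter is always covered by the neighbourhoods of \lemref{Lemma_S3} (using monotonicity in $\varepsilon$). A further delicate point, which I would address through the area contraction together with a precise description of the shrinking triangles near $(2,0)$ and $(0,2)$, is that $F^p(C_{i,j})\setminus C_{0,0}$ may never become empty: what one actually shows is that the set of points of $C_{i,j}$ whose orbit has not met $C_{0,0}$ after $p$ steps contracts, as $p\to\infty$, to the $2$-periodic orbit $\{(2,0),(0,2)\}\subset C_{0,0}$, and this still yields the pointwise statement.
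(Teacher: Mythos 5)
Your plan follows essentially the same route as the paper's own proof: case-by-case tracking of the peripheral squares $C_{i,j}$ as parallelograms (using linearity on each half-plane and area contraction), with the residual pieces accumulating near the $2$-periodic points $(2,0)$ and $(0,2)$ handled by Lemmas~\ref{Lemma_S1}--\ref{Lemma_S3}. Your explicit forward-invariance check of $R_1$ and your remark that the not-yet-captured set only \emph{contracts} to $\{(2,0),(0,2)\}$ rather than emptying out make explicit two points the paper treats more informally, but the strategy and the key lemmas invoked are the same.
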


\begin{proof}
Taking into account that $R_1 = \bigcup_{-1\leq i,j \leq 1} C_{i,j}$, we proceed by cases. We only present the proof for some of them, the rest is left to the care of the reader.
 
If $(x,y) \in C_{1,1}$, we consider that $F(C_{1,1}) \subset C_{1,0}$ is the parallelogram having vertices $(2,1)$, $(2,2)$, $(4,1)$, $(4,0)$, and the second iterate $F^2(C_{1,1})$ is the parallelogram with vertices $\left(1,\frac{5}{2}\right)$, $(2,1)$, $\left(1,\frac{3}{2}\right)$, $(0,3)$. Notice that $F^2(C_{1,1}) \subset C_{0,0} \cup W_{3,\varepsilon} \cup \tilde{T}$ with $\varepsilon=1$ and $\tilde{T}$ the triangle of vertices $\left(1,\frac{5}{2}\right)$, $\left(\frac{4}{3},2\right)$ and $(1,2)$; moreover, $F^2(\tilde{T}) \subset C_{0,0} \cup W_{3,\varepsilon}$, since the vertices of $\tilde{T}$ by $F^2$ go to $\left(\frac{1}{4}, \frac{17}{8}\right)$, $\left( \frac{2}{3}, \frac{5}{3} \right)$ and $\left(\frac{1}{2}, \frac{7}{4} \right)$. From here, by Lemmas \ref{Lemma_S1}-\ref{Lemma_S3}, we deduce that the images of $W_{3,\varepsilon}$ by $F$ either enter to $C_{0,0}$ or either the intersection of such images narrow to segments of type $S_{\varepsilon,0}$ or $S_{\varepsilon,2}$. In any case, we derive the statement of the proposition.
 
For the square $C_{1,0}$, we have that its image $F(C_{1,0})$ is the parallelogram with vertices $(0,2)$, $(0,3)$, $(2,2)$, $(2,1)$ and consequently $F^2(C_{1,0})$ is a new parallelogram having vertices $(2,0)$, $\left(3,-\frac{1}{2}\right)$, $(2,1)$, $\left(1,\frac{3}{2}\right)$; since $F^2(C_{1,0}) \subset C_{0,0} \cup Q_{1,\varepsilon} \cup Q_{2,\varepsilon}$, with $\varepsilon=1$, a similar reasoning to that carried out in the former case gives us the desired conclusion on the enter of the orbit into $C_{0,0}$.
 
For the square $C_{-1,-1} = [-2,0] \times [-2,0]$, we find that $F(C_{-1,-1})$ is a parallelogram with vertices $(-2,-1)$, $(-2,0)$, $(0,1)$, $(0,0)$; we decompose the figure into two triangles $T_1 \cup T_2$, with vertices $(-2,-1)$, $(-2,0)$, $(0,0)$ for $T_1$ and $(-2,0)$, $(0,0)$, $(0,1)$ for $T_2$; then $F(T_2) \subset C_{0,0}$ while $F(T_1)$ is sent to the new triangle $T_3$ with vertices $\left(-1,-\frac{1}{2}\right)$, $(0,0)$, $(0,1)$. Since $T_3$ intersects both half-planes $H_{\ell}$ and $H_u$, we need to decompose it as $T_4 \cup T_5$; in this case, consider that $T_4$ has vertices $\left(-1,-\frac{1}{2}\right)$, $\left(-\frac{2}{3},0\right)$, $(0,0)$, so $F(T_4)$ has vertices $\left(-\frac{1}{2},\frac{1}{4}\right)$, $\left(0,\frac{2}{3}\right)$, $(0,1)$ and it is easily seen that, in fact, $F^2(T_4) \subset C_{0,0}$; with respect to the triangle $T_5$, with vertices $\left(-\frac{2}{3},0\right)$, $(0,0)$, $(0,1)$, it is a simple matter to see that $F(T_5) \subset C_{0,0}$.
\end{proof}

For \textbf{(b)} we need to control the evolution of the images corresponding to the squares $C_{i,j}$. We want to generalize the process by induction in the different levels $2, 3, 4,\ldots$ To this purpose, recall that the vertices of the squares are sent to the vertices of the new parallelograms.

\begin{figure}[ht] 
\includegraphics[scale=0.5]{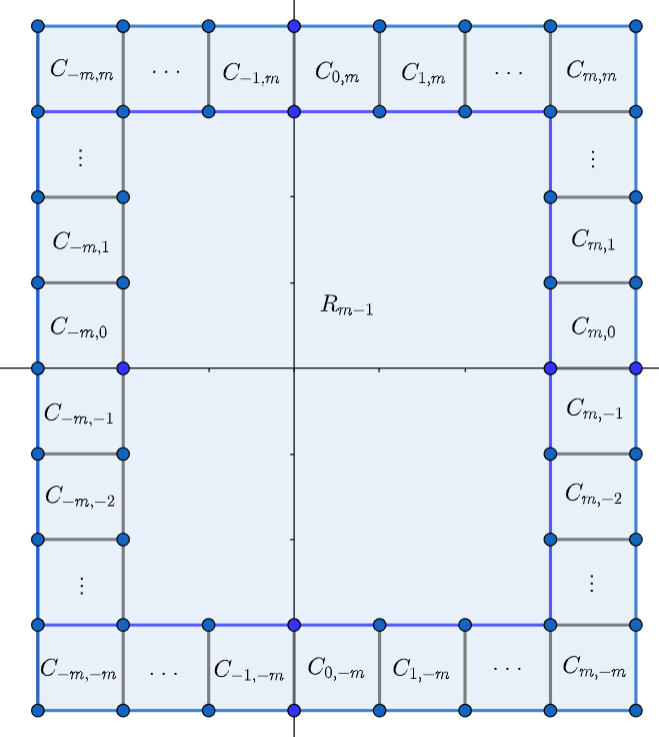}
\caption{The squares $C_{m,n}$, $C_{n,m}$, $C_{-m,n}$ and $C_{n,-m}$ with $n\in[-m,m]$.}
\label{Cmn}
\end{figure}

Suppose that $m\geq 2$. 
If we denote $R_t=\cup_{-t\leq i,j \leq t} C_{i,j}$, we assume that the orbits of points of $R_t$ by $F$ eventually enter into the region $R_{t-1}$, $1\leq t \leq m-1$, and consequently these orbits converge to $2$-periodic points of the square $C_{0,0}$.

In order to apply the process of mathematical induction, we start by studying the movement of squares $C_{m-j,m}$ and $C_{m,m-j}$, $j=0,1,\ldots, m$, as follows (for the proof, it suffices to evaluate the images of the vertices of the squares):

\begin{lemma} \label{LemmaA}
 Let $m\geq 2$ be a positive integer. For any value $j\in \{0,1,\ldots,m\}$:
 
\begin{itemize}
    \item[(a)] $F(C_{m-j,m}) \subset \left\{ \begin{matrix} C_{m,-\frac{j}{2}} & \text{if} \ j \ \text{is even}, \\ C_{m, \frac{-j-1}{2} } \cup C_{m, \frac{-j+1}{2}} & \text{if} \ j \ \text{is odd}. \end{matrix} \right.$
    \item[(b)] $F(C_{m,j}) \subset \left\{ \begin{matrix} C_{j,\frac{m-j}{2}} & \text{if} \ m-j \ \text{is even}, \\ C_{j, \frac{m-j-1}{2}} \cup C_{j, \frac{m-j+1}{2}} & \text{if} \ m-j \ \text{is odd}. \end{matrix} \right.$
\end{itemize}
\end{lemma}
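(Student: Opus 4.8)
The plan is to prove Lemma~\ref{LemmaA} by direct computation, exploiting the fact that $F$ acts as a fixed linear (affine) map on each square $C_{m-j,m}$ and $C_{m,j}$, since all these squares lie entirely in a single half-plane $H_u$ or $H_\ell$. Since $m\geq 2$, the square $C_{m-j,m}=[2(m-j),2(m-j)+2]\times[2m,2m+2]$ has second coordinate ranging in $[2m,2m+2]\subset[4,\infty)$, hence it lies in $H_u$; and $C_{m,j}=[2m,2m+2]\times[2j,2j+2]$ has second coordinate in $[2j,2j+2]$, so it lies in $H_u$ when $j\geq 0$ and in $H_\ell$ when $j\leq -1$. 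On $H_u$ we have $F(x,y)=(y,\,1-\tfrac12 y+\tfrac12 x)$ and on $H_\ell$ we have $F(x,y)=(y,\,1+\tfrac12 y+\tfrac12 x)$; in either case $F$ sends a square to a parallelogram whose vertices are the images of the square's vertices (this is exactly the content of the earlier Lemma on squares going to parallelograms). So it suffices to track the four vertices and then locate the resulting parallelogram inside the grid of $C_{i,j}$'s.

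For part (a), I would take a generic point $(x,y)\in C_{m-j,m}$, so $2(m-j)\leq x\leq 2(m-j)+2$ and $2m\leq y\leq 2m+2$, and apply $F(x,y)=(y,1-\tfrac12 y+\tfrac12 x)$. The first coordinate is $y\in[2m,2m+2]$, which pins the image into the vertical strip of squares with first index $m$. The second coordinate is $1-\tfrac12 y+\tfrac12 x$; plugging in the extreme values gives a range, and one computes its midpoint/endpoints to see it falls in $[2\cdot(-\tfrac{j}{2}),\,2\cdot(-\tfrac{j}{2})+2]$ when $j$ is even, and straddles the boundary between the levels $\tfrac{-j-1}{2}$ and $\tfrac{-j+1}{2}$ when $j$ is odd. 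Concretely, at $x=2(m-j)$, $y=2m$ one gets $1-m+(m-j)=1-j$; at $x=2(m-j)+2$, $y=2m+2$ one gets $1-(m+1)+(m-j+1)=1-j$ as well, and the interior values sweep an interval of length $2$ around $1-j$ — so the second coordinate lies in $[-j,\,2-j]$, which is $[-j,-j+2]$; writing $-j=2(-j/2)$ when $j$ even gives the single square $C_{m,-j/2}$, and when $j$ is odd $[-j,-j+2]=[2\cdot\tfrac{-j-1}{2}+1,\ 2\cdot\tfrac{-j+1}{2}+1]$ meets the two consecutive squares $C_{m,(-j-1)/2}$ and $C_{m,(-j+1)/2}$. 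Part (b) is entirely analogous: for $(x,y)\in C_{m,j}$ the first coordinate of $F(x,y)$ is $y\in[2j,2j+2]$, fixing the first index to $j$, and the second coordinate $1\pm\tfrac12 y+\tfrac12 x$ (sign according to whether $j\geq 0$ or $j\leq -1$) is computed at the vertices to land in an interval of length $2$ centered appropriately, yielding $C_{j,(m-j)/2}$ or the pair $C_{j,(m-j\mp1)/2}$ according to the parity of $m-j$.

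I do not expect a genuine obstacle here; this is a routine but somewhat bookkeeping-heavy verification. The only points requiring a little care are (i) making sure each square $C_{m-j,m}$ and $C_{m,j}$ really lies in a single half-plane so that the affine formula for $F$ applies without case-splitting inside the square — for (a) this is automatic since $m\geq 2$ forces $y\geq 4>0$; for (b) one must note the sign of the formula depends on the sign of $j$, but the computation is symmetric and the parity bookkeeping in $m-j$ is unaffected; and (ii) handling the parity split cleanly, i.e.\ checking that an interval of length exactly $2$ sits inside one grid-square precisely when its left endpoint is an even integer, and otherwise meets exactly two adjacent grid-squares. Both are immediate once the vertex images are written down, so the proof reduces to the four short vertex computations indicated above, which I would present compactly (as the statement itself suggests, ``it suffices to evaluate the images of the vertices of the squares'').
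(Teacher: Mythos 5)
Your proposal is correct and takes essentially the same route as the paper: the authors likewise note that each square lies entirely in one half-plane, evaluate the images of its four vertices under the affine formula for $F$ (obtaining first coordinate in $[2m,2m+2]$ and second coordinate in $[-j,2-j]$ for part (a)), and read off the containing grid squares by parity. Your vertex computations and the parity bookkeeping match the paper's.
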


\begin{proof} (a) Notice that the vertices of $C_{m-j,m}$ are sent to:
$$F(2m-2j, 2m) = (2m,-j+1), \ F(2m-2j, 2m+2) = (2m+2,-j),$$ $$F(2m-2j+2, 2m+2) = (2m+2,-j+1), \ F(2m-2j+2,2m) = (2m,-j+2).$$

From here, it follows part (a). The proof of part (b) is similar and we omit it.
\end{proof}
The following result, whose proof is direct, gives us the evolution of $C_{-j,m}, j=0,1,\ldots,m.$
\begin{lemma}\label{LemmaA-B}
Let $m\geq 2$ be a positive integer. For any value $j\in \{0,1,\ldots,m\}$:
 
$$F(C_{-j,m}) \subset \left\{ \begin{matrix} C_{m,\frac{-m-j}{2}} & \text{if} \ m+j \ \text{is even}, 
		\\ C_{m,\frac{-m-j-1}{2}} \cup C_{m,\frac{-m-j+1}{2}} & \text{if} \ m+j \ \text{is odd}. \end{matrix} \right.$$
\end{lemma}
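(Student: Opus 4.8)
The plan is to mimic exactly the pattern already used for Lemma~\ref{LemmaA}: the square $C_{-j,m}$ lies entirely in the upper half-plane $H_u$ (its $y$-coordinate ranges over $[2m,2m+2]$ with $m\geq 2$), so on this square the map $F(x,y)=(y,1-\tfrac12|y|+\tfrac12 x)$ coincides with the \emph{linear} map $(x,y)\mapsto(y,1-\tfrac12 y+\tfrac12 x)$. By the earlier lemma on squares in $H_u$ or $H_\ell$, $F(C_{-j,m})$ is a parallelogram whose four vertices are the images of the four vertices of $C_{-j,m}$. Hence the whole proof reduces to evaluating $F$ at the four corners $(-2j,2m)$, $(-2j+2,2m)$, $(-2j,2m+2)$, $(-2j+2,2m+2)$ and checking which squares $C_{m,\,\cdot}$ contain the resulting parallelogram.

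Concretely, I would compute
$$F(-2j,2m)=\bigl(2m,\;1-m-j\bigr),\qquad F(-2j+2,2m)=\bigl(2m,\;2-m-j\bigr),$$
$$F(-2j,2m+2)=\bigl(2m+2,\;-m-j\bigr),\qquad F(-2j+2,2m+2)=\bigl(2m+2,\;1-m-j\bigr).$$
All four image points therefore have first coordinate in $\{2m,2m+2\}$, i.e. they sit in the vertical strip $x\in[2m,2m+2]$, which is the $x$-range of $C_{m,n}$ for every $n$. Their second coordinates lie in the set $\{-m-j,\,1-m-j,\,2-m-j\}$, so the parallelogram's $y$-extent is the interval $[-m-j,\,2-m-j]$. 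One then splits into the parity of $m+j$: if $m+j$ is even, write $-m-j=2n$ with $n=\frac{-m-j}{2}$, and the $y$-interval $[2n,2n+2]$ is exactly that of $C_{m,n}$, giving $F(C_{-j,m})\subset C_{m,\frac{-m-j}{2}}$; if $m+j$ is odd, $-m-j$ is an odd integer, so the interval $[-m-j,\,2-m-j]$ of length $2$ straddles the grid line and is covered by the two consecutive squares with $y$-ranges $[-m-j-1,-m-j+1]$ and $[-m-j+1,-m-j+3]$, i.e. $C_{m,\frac{-m-j-1}{2}}\cup C_{m,\frac{-m-j+1}{2}}$. This is precisely the claimed statement.

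Since the lemma is stated with the phrase ``whose proof is direct,'' I would present it tersely: invoke that $C_{-j,m}\subset H_u$ because $m\geq 2$ forces $2m\geq 4>0$; invoke the earlier ``vertices to vertices'' lemma for squares in a half-plane; list the four vertex images as above; and close with the two-line parity case distinction. There is essentially no obstacle here — the only thing to be careful about is the bookkeeping of which parallelogram vertex is which and confirming that the parallelogram (not just its vertices) stays in the union of the asserted squares, which follows because both $C_{m,n}$'s and the parallelogram are convex and the vertices already lie in the union of two ``stacked'' squares sharing an edge. A one-sentence convexity remark handles that, and the proof is complete.
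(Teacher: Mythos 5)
Your proposal is correct and follows exactly the method the paper uses (and explicitly defers to) for this family of lemmas: since $C_{-j,m}\subset H_u$ for $m\geq 2$, $F$ acts linearly there, so one evaluates the four vertex images, observes the image lies in the strip $x\in[2m,2m+2]$ with $y$-extent $[-m-j,2-m-j]$, and splits on the parity of $m+j$. The vertex computations check out, and the closing remark that the parallelogram's bounding box already sits inside the asserted union of squares settles the containment.
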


With respect to the iterates of $C_{m,-j}$, $0\leq j \leq m$, we obtain the following lemma whose proof is direct.

\begin{lemma} \label{LemmaB}
 Let $m\geq 2$ be a positive integer. For any value $j\in \{0,1,\ldots,m\}$: $$F(C_{m,-j}) \subset \left\{ \begin{matrix} C_{-j,\frac{m-j+1}{2}} & \text{if} \ m-j \ \text{is odd}, \\ C_{-j, \frac{m-j}{2}} \cup C_{-j,\frac{m-j+2}{2}} & \text{if} \ m-j \ \text{is even}. \end{matrix} \right. $$
\end{lemma}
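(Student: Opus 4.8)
The plan is to reduce the statement, exactly as with Lemmas~\ref{LemmaA}, \ref{LemmaA-B}, to a direct computation of the images of the four vertices of the square $C_{m,-j} = [2m, 2m+2] \times [-2j, -2j+2]$ under $F$. The essential observation is that since $m \geq 2$ and $0 \leq j \leq m$, the square $C_{m,-j}$ lies entirely in the lower half-plane $H_{\ell}$ (its $y$-coordinates satisfy $-2j \leq y \leq -2j+2 \leq 2$, but we must check $y \leq 0$; this holds when $j \geq 1$, while for $j=0$ the square $C_{m,0}$ straddles the axis and is already covered by Lemma~\ref{LemmaA}(b), so we may assume $j \geq 1$). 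On $H_{\ell}$ the map acts as the affine map $F(x,y) = (y, 1 + \tfrac12 y + \tfrac12 x)$, hence $F$ sends the square to a parallelogram whose vertices are the images of the vertices of $C_{m,-j}$, and it suffices to locate these four image points.

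First I would compute the four vertex images. Writing the vertices of $C_{m,-j}$ as $(2m,-2j)$, $(2m,-2j+2)$, $(2m+2,-2j)$, $(2m+2,-2j+2)$, the affine formula gives
\begin{align*}
F(2m,-2j) &= \left(-2j,\ 1 - j + m\right), \\
F(2m,-2j+2) &= \left(-2j+2,\ 2 - j + m\right), \\
F(2m+2,-2j) &= \left(-2j,\ 2 - j + m\right), \\
F(2m+2,-2j+2) &= \left(-2j+2,\ 3 - j + m\right).
\end{align*}
Thus the image parallelogram has all its $x$-coordinates in $[-2j, -2j+2]$, i.e. it sits in the vertical strip of squares $C_{-j,\,\cdot}$, and its $y$-coordinates range over $[1 - j + m,\ 3 - j + m]$, a band of height $2$.

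Next I would translate the $y$-range $[m-j+1,\ m-j+3]$ into a statement about which horizontal level(s) of squares $C_{-j,n}$ it meets, where $C_{-j,n}$ has $y \in [2n, 2n+2]$. If $m - j$ is odd, then $m-j+1$ is even, so $[m-j+1, m-j+3] = [2\cdot\tfrac{m-j+1}{2},\ 2\cdot\tfrac{m-j+1}{2}+2]$ is exactly one such level, giving $F(C_{m,-j}) \subset C_{-j,\,\frac{m-j+1}{2}}$. If $m-j$ is even, then $m-j+1$ is odd and the band $[m-j+1, m-j+3]$ straddles the two consecutive levels $[m-j,\ m-j+2]$ and $[m-j+2,\ m-j+4]$, i.e. $n = \tfrac{m-j}{2}$ and $n = \tfrac{m-j+2}{2}$, yielding $F(C_{m,-j}) \subset C_{-j,\,\frac{m-j}{2}} \cup C_{-j,\,\frac{m-j+2}{2}}$. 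This matches the two cases in the statement.

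I do not anticipate a genuine obstacle here: once one records that $F$ restricted to $H_{\ell}$ is affine with the stated formula (a fact already established in the area and vertex lemmas preceding Lemma~\ref{LemmaA}), everything is a bookkeeping check on vertex coordinates and a parity split. The only mild care needed is the boundary value $j = 0$, which I would either exclude by noting it is subsumed in Lemma~\ref{LemmaA}(b), or handle by observing that $C_{m,0}$ lies in $H_{u}$, not $H_{\ell}$, so a different branch of $F$ applies; since the lemma is invoked in the induction only for $j \geq 1$ this causes no trouble.
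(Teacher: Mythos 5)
Your proposal is correct and takes essentially the same route as the paper, which proves this family of lemmas by evaluating $F$ on the four vertices of the square (the proof of Lemma~\ref{LemmaB} itself being omitted as ``direct''); your vertex images, bounding box $[-2j,-2j+2]\times[m-j+1,m-j+3]$, and parity split all check out. Your handling of $j=0$ is also sound: $C_{m,0}\subset H_u$, so the $H_\ell$ branch does not apply there and that case is correctly deferred to Lemma~\ref{LemmaA}(b), a boundary subtlety the paper's statement glosses over.
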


It remains to analyze the iterates of $C_{j,-m}$ and $C_{-m,j}$ for $-m\leq j \leq m$.

\begin{lemma} \label{LemmaC}
 Let $m\geq 2$ be a positive integer. For any integer value $j\in [-m,m]$: $$F(C_{j,-m}) \subset \left\{ \begin{matrix} C_{-m,\frac{-m+j+1}{2}} & \text{if} \ m-j \ \text{is odd}, \\ C_{-m, \frac{-m+j}{2}} \cup C_{-m,\frac{-m+j+2}{2}}, & \text{if} \ m-j \ \text{is even}. \end{matrix} \right. $$
\end{lemma}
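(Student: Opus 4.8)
The plan is to argue exactly as in Lemmas \ref{LemmaA}, \ref{LemmaA-B} and \ref{LemmaB}, i.e.\ by tracking the four corners of the square and invoking linearity. The first observation I would record is that, since $m\geq 2$, we have $-2m+2\leq -2<0$, so the square $C_{j,-m}=[2j,2j+2]\times[-2m,-2m+2]$ lies entirely in the lower half-plane $H_{\ell}$; consequently $|y|=-y$ throughout $C_{j,-m}$ and $F$ restricts there to the affine map $(x,y)\mapsto\bigl(y,\,1+\tfrac12 y+\tfrac12 x\bigr)$. By the earlier lemma asserting that a square contained in $H_{\ell}$ is carried by $F$ to a parallelogram whose vertices are the images of the vertices of the square, it then suffices to evaluate $F$ at the four corners $(2j,-2m)$, $(2j,-2m+2)$, $(2j+2,-2m+2)$, $(2j+2,-2m)$.

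Carrying out that short linear computation, I expect the images to have first coordinates $-2m$ or $-2m+2$ and second coordinates among $1-m+j$, $2-m+j$, $3-m+j$; reading off the resulting parallelogram, its first coordinate fills the strip $-2m\leq x\leq -2m+2$ and its second coordinate satisfies $1-m+j\leq y\leq 3-m+j$, so $F(C_{j,-m})\subseteq[-2m,-2m+2]\times[1-m+j,\,3-m+j]$. The last step is then a parity bookkeeping against the grid squares $C_{-m,n}=[-2m,-2m+2]\times[2n,2n+2]$: when $m-j$ is odd, $1-m+j$ is even and equals $2n$ with $n=\tfrac{-m+j+1}{2}$, so the $y$-range is exactly $[2n,2n+2]$ and $F(C_{j,-m})\subseteq C_{-m,\frac{-m+j+1}{2}}$; when $m-j$ is even, $1-m+j$ is odd and equals $2n+1$ with $n=\tfrac{-m+j}{2}$, so $[2n+1,2n+3]\subseteq[2n,2n+4]$ and $F(C_{j,-m})\subseteq C_{-m,\frac{-m+j}{2}}\cup C_{-m,\frac{-m+j+2}{2}}$, which is precisely the stated inclusion.

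I do not anticipate a genuine obstacle: this is a routine affine computation of the same flavour as the preceding lemmas, and the argument the authors have in mind is surely just ``evaluate $F$ at the vertices.'' The two points worth keeping an eye on are that the hypothesis $m\geq 2$ is exactly what places the entire square in $H_{\ell}$ (so the absolute value can be dropped and $F$ is genuinely affine on all of $C_{j,-m}$, not merely on a sub-piece), and that the even/odd dichotomy for $1-m+j$ must be matched correctly to the two branches of the displayed formula — a sign or off-by-one slip in that bookkeeping is the likeliest source of error, so I would double-check it by testing a small explicit value such as $m=2$, $j=-2,\dots,2$.
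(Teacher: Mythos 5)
Your proposal is correct and is exactly the argument the paper intends: the paper omits the proof of this lemma but indicates (in the proof of Lemma~\ref{LemmaA}) that one simply evaluates $F$ at the four vertices of the square, using that the square lies entirely in $H_{\ell}$ so $F$ is affine there, and your vertex images and parity bookkeeping check out. (The only cosmetic quibble is that $m\geq 1$ already places $C_{j,-m}$ in $H_{\ell}$; the hypothesis $m\geq 2$ is not what is needed for that step.)
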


We stress that, for $j=-m$, we have $F(C_{-m,-m})\subset C_{-m,-m} \cup C_{-m,-m+1}$. This means that this case has to be discussed carefully in order to obtain that its iterates enter to $R_{m-1}$ at some moment. This will be clarified with the images $F(C_{m,-j})$ and with the tracing of the parallelograms obtained in successive steps.

Before, let us state the evolution of the images of $C_{-m,j}$ for integer values $j\in[-m,m]$. Again, we omit the proofs.

\begin{lemma} \label{LemmaD}
 Let $m\geq 2$ be a positive integer. For any value $j\in \{0,1,\ldots,m\}$: $$F(C_{-m,j}) \subset \left\{ \begin{matrix} C_{j,\frac{-m-j}{2}} & \text{if} \ m+j \ \text{is even}, \\ C_{j, \frac{-m-j-1}{2}} \cup C_{j,\frac{-m-j+1}{2}}, & \text{if} \ m+j \ \text{is odd}. \end{matrix} \right. $$
\end{lemma}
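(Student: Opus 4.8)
The plan is to prove Lemma~\ref{LemmaD} exactly as the earlier companion lemmas (Lemmas~\ref{LemmaA}--\ref{LemmaC}) are proved: by explicitly tracking where the four vertices of the square $C_{-m,j}$ are mapped under $F$, and then observing that the image parallelogram is contained in the asserted union of squares. Since $j\in\{0,1,\ldots,m\}$, the square $C_{-m,j}=[-2m,-2m+2]\times[2j,2j+2]$ lies entirely in the upper half-plane $H_u$ (its $y$-coordinates are $\geq 2j\geq 0$), so on this square $F$ acts as the linear (affine) map $F(x,y)=(y,\,1-\tfrac12 y+\tfrac12 x)$. By the lemma asserting that $F$ sends a square lying in $H_u$ or $H_\ell$ to a parallelogram whose vertices are the images of the vertices, it suffices to compute those four images.

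Concretely, I would evaluate:
\begin{align*}
F(-2m,2j) &= \Bigl(2j,\ 1-j-m\Bigr),\\
F(-2m,2j+2) &= \Bigl(2j+2,\ -j-m\Bigr),\\
F(-2m+2,2j+2) &= \Bigl(2j+2,\ -j-m+1\Bigr),\\
F(-2m+2,2j) &= \Bigl(2j,\ 1-j-m+1\Bigr)=\Bigl(2j,\ 2-j-m\Bigr).
\end{align*}
Thus the image parallelogram has first coordinate ranging over $[2j,2j+2]$, which is exactly the $x$-extent of the column of squares $C_{j,\bullet}$. For the second coordinate one reads off the vertical extent: the values $1-j-m$, $-j-m$, $-j-m+1$, $2-j-m$ occupy the interval $[-j-m,\,2-j-m]$ (indeed $-j-m \le 1-j-m \le 2-j-m$ and $-j-m \le -j-m+1 \le 2-j-m$). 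Now one distinguishes the parity of $m+j$: if $m+j$ is even, write $m+j=2s$; then $[-j-m,2-j-m]=[-2s,-2s+2]$, which is the $y$-extent of $C_{j,-s}=C_{j,\frac{-m-j}{2}}$, giving the first case. If $m+j$ is odd, write $m+j=2s+1$; then $[-j-m,2-j-m]=[-2s-1,-2s+1]\subset[-2s-2,-2s+2]$, the $y$-extent of $C_{j,-s-1}\cup C_{j,-s}=C_{j,\frac{-m-j-1}{2}}\cup C_{j,\frac{-m-j+1}{2}}$, giving the second case. Combining the $x$- and $y$-containments yields $F(C_{-m,j})\subset C_{j,\frac{-m-j}{2}}$ or $F(C_{-m,j})\subset C_{j,\frac{-m-j-1}{2}}\cup C_{j,\frac{-m-j+1}{2}}$ according to the parity of $m+j$, as claimed.

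There is no real obstacle here; the only point requiring a little care is the bookkeeping of which vertex of the square maps to which vertex of the parallelogram and the verification that the four image $y$-values together fill (rather than merely meet) the asserted interval, so that the containment in the union of two squares (in the odd case) is tight and correct. Because all coordinates involved are integers or integers shifted by the fixed affine part $1-\tfrac12 y+\tfrac12 x$ evaluated at integer points, everything reduces to an elementary parity computation, which is why the authors (and I) can simply say the proof is direct and omit it; I would include the vertex computation above in a single displayed \texttt{align*} for completeness and leave the parity case-split as a one-line remark.
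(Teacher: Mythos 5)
Your proposal is correct and follows exactly the method the paper uses for the analogous Lemma~\ref{LemmaA} (and implicitly intends for Lemma~\ref{LemmaD}, whose proof it omits as ``direct''): since $C_{-m,j}$ lies in $H_u$ for $j\geq 0$, $F$ is affine there, so one computes the four vertex images $(2j,1-j-m)$, $(2j+2,-j-m)$, $(2j,2-j-m)$, $(2j+2,1-j-m)$ and reads off the containment by parity of $m+j$. The vertex computations and the parity case-split check out, so nothing further is needed.
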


\begin{lemma} \label{LemmaE}
 Let $m\geq 2$ be a positive integer. For any value $j\in \{0,1,\ldots,m\}$: $$F(C_{-m,-j}) \subset \left\{ \begin{matrix} C_{-j,\frac{-m-j+1}{2}} & \text{if} \ m+j \ \text{is odd}, \\ C_{-j, \frac{-m-j}{2}} \cup C_{-j,\frac{-m-j+2}{2}}, & \text{if} \ m+j \ \text{is even}. \end{matrix} \right. $$
\end{lemma}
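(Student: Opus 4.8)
The plan is to imitate the (omitted) proofs of Lemmas~\ref{LemmaA}--\ref{LemmaD}: reduce everything to evaluating $F$ at the four corners of the square. The first point to record is that $C_{-m,-j}=[-2m,-2m+2]\times[-2j,-2j+2]$ lies entirely in one closed half-plane: for $j\geq 1$ it is contained in $H_\ell=\{y\leq 0\}$, and for $j=0$ it is contained in $H_u=\{y\geq 0\}$. On each half-plane $F(x,y)=(y,\,1-\tfrac12|y|+\tfrac12 x)$ coincides with an affine map, namely $F(x,y)=(y,\,1+\tfrac12 y+\tfrac12 x)$ on $H_\ell$ and $F(x,y)=(y,\,1-\tfrac12 y+\tfrac12 x)$ on $H_u$, each with invertible linear part (determinant $-\tfrac12$). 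Hence $F$ carries $C_{-m,-j}$ onto a genuine parallelogram which is the convex hull of the images of its four vertices, so it suffices to control those four points — exactly the principle used just before Lemma~\ref{LemmaA}.

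\textbf{Vertex computation.} For $j\geq 1$ I would compute directly, using the $H_\ell$-formula,
\begin{align*}
F(-2m,-2j)&=(-2j,\,1-j-m), & F(-2m+2,-2j)&=(-2j,\,2-j-m),\\
F(-2m,-2j+2)&=(-2j+2,\,2-j-m), & F(-2m+2,-2j+2)&=(-2j+2,\,3-j-m).
\end{align*}
All four first coordinates lie in $\{-2j,-2j+2\}$, so the image is contained in the vertical strip $[-2j,-2j+2]\times\mathbb{R}$, which is precisely the horizontal extent of every square $C_{-j,\,n}$. The second coordinates take the values $1-j-m$, $2-j-m$, $3-j-m$, so the image occupies the horizontal band $[\,1-(m+j),\,3-(m+j)\,]$, an interval of length $2$.

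\textbf{Parity split.} Then I would case on the parity of $m+j$. If $m+j$ is odd, then $1-(m+j)$ is even; writing $1-(m+j)=2n$ with $n=\frac{-m-j+1}{2}$, the band is exactly $[2n,2n+2]$, whence $F(C_{-m,-j})\subset C_{-j,\,\frac{-m-j+1}{2}}$. If $m+j$ is even, then $1-(m+j)$ is odd; writing $1-(m+j)=2n-1$ with $n=\frac{-m-j+2}{2}$, the band $[2n-1,2n+1]$ sits inside $[2n-2,2n+2]$, i.e. inside the union of the vertical extents of $C_{-j,\,n-1}$ and $C_{-j,\,n}$, and since $n-1=\frac{-m-j}{2}$ this yields $F(C_{-m,-j})\subset C_{-j,\,\frac{-m-j}{2}}\cup C_{-j,\,\frac{-m-j+2}{2}}$. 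Finally, the leftover value $j=0$ gives the square $C_{-m,0}$, whose image is already located by Lemma~\ref{LemmaD}; one checks that the region supplied there is contained in the one asserted here.

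\textbf{Main obstacle.} None of this is deep: the computations are routine linear algebra. The only genuinely delicate parts are the parity bookkeeping — keeping the three candidate indices $\frac{-m-j+1}{2}$, $\frac{-m-j}{2}$, $\frac{-m-j+2}{2}$ straight — and remembering that the affine representative of $F$ switches between $H_\ell$ and $H_u$, which is exactly why the boundary case $j=0$ has to be treated separately (or deferred to Lemma~\ref{LemmaD}).
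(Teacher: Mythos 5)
Your computation for $j\geq 1$ is correct and is exactly the argument the paper intends: as in the proof of Lemma~\ref{LemmaA}(a), everything reduces to evaluating $F$ at the four vertices, and your vertex images, the resulting band $[1-(m+j),\,3-(m+j)]$ of second coordinates, and the parity bookkeeping all check out.

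The one real problem is the case $j=0$, which you defer to Lemma~\ref{LemmaD} with the claim that ``the region supplied there is contained in the one asserted here.'' That claim is false when $m$ is odd. For $j=0$ and $m$ odd, Lemma~\ref{LemmaD} places $F(C_{-m,0})$ in $C_{0,\frac{-m-1}{2}}\cup C_{0,\frac{-m+1}{2}}$, whereas Lemma~\ref{LemmaE} asserts containment in the single square $C_{0,\frac{-m+1}{2}}=[0,2]\times[1-m,\,3-m]$, and the former is not a subset of the latter. Worse, a direct computation shows that the stronger containment genuinely fails: $C_{-m,0}\subset H_u$, so there $F(x,y)=\left(y,\,1-\tfrac{1}{2}y+\tfrac{1}{2}x\right)$, and the vertex $(-2m,2)$ is sent to $(2,-m)$, which lies strictly below $C_{0,\frac{-m+1}{2}}$ because $-m<1-m$. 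So at $j=0$ with $m$ odd the indices of Lemma~\ref{LemmaD} (derived with the $H_u$ representative of $F$) are the correct ones, and the formula of Lemma~\ref{LemmaE} does not apply; the discrepancy comes precisely from the sign flip between the affine representatives of $F$ on $H_\ell$ and $H_u$ that you yourself flag as the delicate point. You should either exclude $j=0$ from the statement (that square is already covered by Lemma~\ref{LemmaD}) or redo the vertex computation with the $H_u$ formula for that case; ``one checks'' does not go through as written.
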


Finally, the promised result concerning the evolution of $C_{-m,-m}$:

\begin{lemma} \label{LemmaF} Let $m\geq 2$ be an arbitrary integer.
 It holds $F(C_{-m,-m}) \subset C_{-m,-m} \cup C_{-m,-m+1}$. In particular:

(a) $F(C_{-m,-m})$ is the parallelogram with vertices $(-2m+2, -2m+3)$, $(-2m+2,-2m+2)$, $(-2m,-2m+1)$, $(-2m,-2m+2)$;

(b) $F^2(C_{-m,-m})$ is the parallelogram with vertices $\left(-2m+1,-2m+\frac{3}{2}\right)$, $(-2m+2, -2m+2)$, $\left(-2m+3, -2m+\frac{7}{2}\right)$, $(-2m+2,-2m+3)$;

(c) $F^2(C_{-m,-m}) \subset C_{-m,-m} \cup \, C_{-m,-m+1} \cup \, C_{-m+1,-m+1}$, and the part contained in $C_{-m,-m}\cup \, C_{-m,-m+1}$ is the triangle $T_m$ with vertices $\left(-2m+1, -2m+\frac{3}{2} \right)$, $(-2m+2, -2m+2)$, $(-2m+2, -2m+3)$;

(d) $F(T_m)$ is the triangle with vertices $\left(-2m+\frac{3}{2}, -2m+\frac{9}{4} \right)$, $(-2m+2, -2m+3)$, $\left(-2m+3, -2m+\frac{7}{2} \right)$. Therefore, $F(T_m) \subset C_{-m,-m+1} \cup R_{m-1}$.
\end{lemma}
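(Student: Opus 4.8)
The plan is to reduce everything to elementary affine geometry on vertices. For $m\ge 2$ the square $C_{-m,-m}=[-2m,-2m+2]\times[-2m,-2m+2]$ lies entirely in the lower half-plane $H_\ell$, since its second coordinate never exceeds $-2m+2\le-2<0$; hence on $C_{-m,-m}$ the map $F$ coincides with the affine map $(x,y)\mapsto\bigl(y,\,1+\tfrac12 x+\tfrac12 y\bigr)$, which (by the structure of $F$ on squares contained in one half-plane, already established) sends the square to a parallelogram and carries its vertices to the vertices of that parallelogram. So it suffices to track the four corners and then invoke convexity of the resulting quadrilaterals.

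First I would apply this affine map to the four vertices of $C_{-m,-m}$, obtaining $(-2m,-2m+1)$, $(-2m+2,-2m+2)$, $(-2m,-2m+2)$, $(-2m+2,-2m+3)$, which are exactly the vertices of the parallelogram in~(a). Since each of them has abscissa in $\{-2m,-2m+2\}$ and ordinate in $\{-2m+1,-2m+2,-2m+3\}$, that parallelogram is contained in the rectangle $[-2m,-2m+2]\times[-2m,-2m+4]=C_{-m,-m}\cup C_{-m,-m+1}$; this proves the opening inclusion and part~(a) simultaneously.

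Next, $F(C_{-m,-m})$ still lies in $H_\ell$ (its ordinates stay $\le-2m+3\le-1<0$ for $m\ge 2$), so I would apply the same affine map once more to those four vertices, obtaining the parallelogram of~(b) with vertices $(-2m+1,-2m+\tfrac32)$, $(-2m+2,-2m+2)$, $(-2m+3,-2m+\tfrac72)$, $(-2m+2,-2m+3)$. For~(c) I would parametrize this parallelogram as $P_1+s\,\mathbf u+t\,\mathbf v$ with $s,t\in[0,1]$, where $P_1=(-2m+1,-2m+\tfrac32)$ and $\mathbf u=(1,\tfrac12)$, $\mathbf v=(1,\tfrac32)$ are the edges issuing from $P_1$; then $x\le-2m+2$ reads $s+t\le1$, so the part of $F^2(C_{-m,-m})$ lying in $C_{-m,-m}\cup C_{-m,-m+1}$ is precisely the triangle $T_m$ with vertices $P_1,(-2m+2,-2m+2),(-2m+2,-2m+3)$, while the complementary triangle ($s+t\ge1$), with vertices $(-2m+2,-2m+2),(-2m+3,-2m+\tfrac72),(-2m+2,-2m+3)$, has all coordinates in $[-2m+2,-2m+4]^2=C_{-m+1,-m+1}$; moreover solving $x\ge-2m+2$ together with $y\le-2m+2$ shows the parallelogram meets $C_{-m+1,-m}$ only at the single point $(-2m+2,-2m+2)$, which already belongs to $C_{-m+1,-m+1}$, so~(c) follows. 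Finally $T_m\subset H_\ell$ as well, hence applying the affine map to its three vertices yields the triangle of~(d) with vertices $(-2m+\tfrac32,-2m+\tfrac94)$, $(-2m+2,-2m+3)$, $(-2m+3,-2m+\tfrac72)$; as its abscissas lie in $[-2m+\tfrac32,-2m+3]$ and its ordinates in $[-2m+\tfrac94,-2m+\tfrac72]\subset[-2m+2,-2m+4]$, its part with $x\le-2m+2$ is contained in $C_{-m,-m+1}$ and its part with $x\ge-2m+2$ in $R_{m-1}=[-2m+2,2m]^2$, which is the stated conclusion.

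The vertex computations are entirely routine; the only points that need care are verifying at each stage that the figure being iterated genuinely sits inside a single half-plane, so that the linear formula keeps applying --- this is exactly where the hypothesis $m\ge 2$ enters --- and the bookkeeping in~(c) that correctly identifies which of the two triangles cut off by the vertical line $x=-2m+2$ is the ``escaping'' one and confirms that it avoids $C_{-m+1,-m}$.
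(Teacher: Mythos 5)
Your proof is correct, and it follows exactly the route the paper intends but leaves to the reader (the lemma is stated without proof, the companion Lemmas~\ref{LemmaA}--\ref{LemmaE} indicating that one simply evaluates the images of the vertices): all your vertex images under the affine branch $(x,y)\mapsto(y,1+\tfrac12 x+\tfrac12 y)$ check out, and you correctly verify at each stage that the figure stays in $H_{\ell}$ so that this branch applies. The parametrization in part (c) and the observation that the parallelogram meets $C_{-m+1,-m}$ only at the corner $(-2m+2,-2m+2)$ are welcome extra care beyond what the paper records.
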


We collect the previous study in the following result.

\begin{proposition}
Let $$(x,y)\in \bigcup_{-m\leq j \leq m}(C_{j,m}\cup C_{m,j} \cup C_{j,-m} \cup C_{-m,j}),$$ with $m\geq 1$. Then, there exists a positive integer $N$ such that $F^N(x,y) \in R_{m-1}$.
\end{proposition}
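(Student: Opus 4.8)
The plan is to read this statement as the induction step feeding the scheme that precedes it, and to prove it by pushing the ``ring at level $m$'' through Lemmas~\ref{LemmaA}--\ref{LemmaF}. First dispose of $m=1$: then the union in the statement lies inside $R_1=[-2,4]\times[-2,4]$ and $R_0=C_{0,0}$, so the claim is exactly Proposition~\ref{P:inicio}. Assume $m\ge 2$ and put $\ell(C_{i,j}):=\max\{|i|,|j|\}$; the ring $\bigcup_{-m\le j\le m}(C_{j,m}\cup C_{m,j}\cup C_{j,-m}\cup C_{-m,j})$ is exactly the finite set (of cardinality $8m$) of squares of level $m$, and, split according to the sign of the free index, it is covered square-by-square by Lemmas~\ref{LemmaA}, \ref{LemmaA-B}, \ref{LemmaB}, \ref{LemmaC}, \ref{LemmaD}, \ref{LemmaE}, the single square $C_{-m,-m}$ being reserved for the finer Lemma~\ref{LemmaF}. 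Each of those lemmas states that $F$ carries a level-$m$ square into the union of at most two squares, each again of level $\le m$ (so the iteration never rises above level $m$) and with first index equal to the second index of the source square (this is just $F(x,y)=(y,\cdot)$). Consequently, as soon as some iterate of a point lands in a square of level $\le m-1$ it already lies in $R_{m-1}$, and we are done; so it suffices to show that every orbit of squares eventually reaches level $\le m-1$.

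To this end I would consider the finite directed graph $G$ on the level-$m$ squares whose arrows are the inclusions of Lemmas~\ref{LemmaA}--\ref{LemmaE} that still land at level $m$, and argue that $G$ is acyclic except for the self-loop at $C_{-m,-m}$. Writing $F^{n}(x,y)\in C_{a_{n},a_{n+1}}$, a level-$m$ arrow forces $\max\{|a_{n}|,|a_{n+1}|\}=m$, while the quadrant-dependent ``average and halve'' rule producing $a_{n+2}$ from $a_{n},a_{n+1}$ shows that a new index of absolute value $m$ can arise only from the \emph{corner} configurations; tracing these, one finds $(-m,m)\mapsto(m,-m)\mapsto(-m,\{0,1\})$, after which the level drops strictly, and that the only configuration reproducing itself is $(-m,-m)$. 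Hence every orbit of squares that does not enter $C_{-m,-m}$ leaves the level-$m$ ring — that is, reaches a square contained in $R_{m-1}$ — after at most $8m$ steps, which settles the proposition for all initial points whose orbit avoids $C_{-m,-m}$.

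Finally, $C_{-m,-m}$ is handled by Lemma~\ref{LemmaF}, and it is exactly here that the area-halving property is used: although $F(C_{-m,-m})\subset C_{-m,-m}\cup C_{-m,-m+1}$ is still at level $m$, parts (b)--(c) show that the portion of $F^{2}(C_{-m,-m})$ remaining at level $m$ has shrunk to the triangle $T_{m}$, part (d) gives $F(T_{m})\subset C_{-m,-m+1}\cup R_{m-1}$, and $F(C_{-m,-m+1})\subset C_{-(m-1),-(m-1)}\subset R_{m-1}$ by Lemma~\ref{LemmaE} with $j=m-1$; thus any orbit starting in $C_{-m,-m}$ enters $R_{m-1}$ within four iterations, and combining the two cases finishes the proof. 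The step I expect to be the genuine obstacle is the acyclicity of $G$: one must check that none of the finitely many ``critical chains'' emanating from the corner squares — where the $\lceil m/2\rceil$-bound for the new index may fail to lower the level, and where the splitting $C_{\ast}\mapsto C_{\ast}\cup C_{\ast}$ can route the orbit through the worse of the two squares — closes up into a loop for some small $m$; the area-halving lemma is precisely what removes the one honest exception, the self-loop at $C_{-m,-m}$.
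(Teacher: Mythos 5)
Your proposal follows the same route as the paper, whose own proof of this proposition is simply the one\nobreakdash-line assertion that it ``follows directly from Proposition~\ref{P:inicio} and Lemmas \ref{LemmaA}--\ref{LemmaF}''; your transition-graph formulation is a faithful (and more explicit) organization of exactly that argument, and your treatment of the exceptional corner $C_{-m,-m}$ via Lemma~\ref{LemmaF} together with Lemma~\ref{LemmaE} at $j=m-1$ is correct. Two remarks. First, the acyclicity check that you defer as ``the genuine obstacle'' is in fact where all the content of the proposition lives, and it does go through: from Lemmas \ref{LemmaA}--\ref{LemmaE} one reads off that the top row maps into the lower half of the right column, the right column (corner excepted) drops into $R_{m-1}$ in one step, the bottom row maps into the lower half of the left column, and the left column drops in one step except at $C_{-m,m}$, $C_{-m,m-1}$ and $C_{-m,-m}$; so every path through the level-$m$ ring terminates after a bounded number of steps unless it sits in the self-loop at $C_{-m,-m}$. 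Second, the one concrete claim you do make about the corner chain --- that after $(-m,m)\mapsto(m,-m)\mapsto(-m,\{0,1\})$ ``the level drops strictly'' --- fails for $m=2$: Lemma~\ref{LemmaD} with $j=1$ gives $F(C_{-2,1})\subset C_{1,-2}\cup C_{1,-1}$, and $C_{1,-2}$ is still at level $2$; one must continue $C_{1,-2}\to C_{-2,0}\to C_{0,-1}$ (Lemmas \ref{LemmaC} and \ref{LemmaD}) before reaching $R_{1}$. This does not break the argument --- the chain still terminates --- but it shows the small-$m$ corner cases need the explicit tracing you postponed rather than a uniform ``halving lowers the level'' heuristic.
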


\begin{proof}
The proof follows directly from Proposition~\ref{P:inicio} and Lemmas \ref{LemmaA}-\ref{LemmaF}.
\end{proof}

As a consequence of all our study, we conclude:

\begin{theorem} \label{Th_a=0.5}
Given the difference equation $$x_{n+1} = 1 - \frac{1}{2}|x_n| + \frac{1}{2}x_{n-1}, $$ its dynamics is given by:
\begin{itemize}
    \item[\textbf{a)}] An equilibrium point,  $\overline{x}=1$.
    \item[\textbf{b)}] A continuum of $2$-periodic sequences $(\ldots,x,y,x,y,\ldots)$ with $0\leq x,y \leq 2$, $x+y=2$.
    \item[\textbf{c)}] The rest of solutions converge to one of the $2$-periodic solutions given in Part (b).
\end{itemize}
\end{theorem}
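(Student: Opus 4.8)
The plan is to assemble the machinery already in place. Parts~\textbf{a)} and~\textbf{b)} require nothing new: the equilibrium $\bar x=1$ comes from the lemma on equilibria of Equation~(\ref{Eq_Lozi_a0.5}), and the $2$-periodic family of part~\textbf{b)} is exactly the lemma characterizing the initial data $(x,y)$ of $2$-periodic solutions ($0\le x,y\le 2$, $x+y=2$), the constant solution appearing as the degenerate member $x=y=1$. All the work is in part~\textbf{c)}: I must show that every orbit of $F(x,y)=\bigl(y,\,1-\tfrac12|y|+\tfrac12 x\bigr)$ is asymptotically periodic to a $2$-periodic point lying on $\{x+y=2,\ 0\le x,y\le 2\}$.

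The pivotal observation is that $C_{0,0}=[0,2]^2$ coincides with $\Delta_{\ell}\cup\Delta_u$; hence $C_{0,0}$ is forward invariant, and on it $F$ is the affine map $(x,y)\mapsto\bigl(y,\,1-\tfrac12 y+\tfrac12 x\bigr)$. Consequently, by the earlier proposition on $\Delta_{\ell}\cup\Delta_u$, any orbit that ever meets $C_{0,0}$ converges to a $2$-periodic point $(v,w)$ with $v+w=2$ and $0\le v,w\le 2$ --- a point of the family of part~\textbf{b)}. So it suffices to show that every orbit eventually enters $C_{0,0}$, up to the degenerate alternative noted below. I would prove this by peeling inward through the nested levels $R_m=\bigcup_{-m\le i,j\le m}C_{i,j}=[-2m-2,\,2m+2]^2$. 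Since the squares $C_{i,j}$ tile the plane, any initial condition lies in some $R_m$, and I argue by induction on $m$. The base cases $m=0$ ($R_0=C_{0,0}$, trivial) and $m=1$ (Proposition~\ref{P:inicio}) are already settled. For $m\ge 2$, write $R_m=R_{m-1}\cup\mathcal{F}_m$ with $\mathcal{F}_m=\bigcup_{-m\le j\le m}(C_{j,m}\cup C_{m,j}\cup C_{j,-m}\cup C_{-m,j})$; the proposition collecting Lemmas~\ref{LemmaA}--\ref{LemmaF} shows every orbit starting in $\mathcal{F}_m$ reaches $R_{m-1}$ in finitely many iterations, and then the induction hypothesis carries it into $C_{0,0}$. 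Composing finitely many such descents gives an $N$ with $F^N(x,y)\in C_{0,0}$, and the first half of this paragraph finishes the argument.

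Finally I would dispose of the degenerate alternative flagged in the discussion of the squares $C_{m,n}$ and in Lemma~\ref{LemmaF}: if an orbit never enters $C_{0,0}$, then (corner squares $C_{-m,-m}$) its $\omega$-limit set is contained in $\{(0,2),(2,0)\}$, which is itself the $2$-periodic orbit $(\ldots,0,2,0,2,\ldots)$ with $x+y=2$; so this orbit too is asymptotically $2$-periodic to a point of the part~\textbf{b)} family. Gathering all cases yields \textbf{a)}, \textbf{b)}, and \textbf{c)}.

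I expect the only genuinely delicate point to be the corner bookkeeping: $C_{-m,-m}$ does not map into $R_{m-1}$ in a single step --- by Lemma~\ref{LemmaF} one has $F(C_{-m,-m})\subset C_{-m,-m}\cup C_{-m,-m+1}$ --- so one must iterate three times, peeling off the triangle $T_m$, until the residue is swept into $R_{m-1}$ or into a non-corner boundary square governed by Lemmas~\ref{LemmaA}--\ref{LemmaE}. Two routine checks remain: that the relation ``eventually reaches $R_{m-1}$'' composes along the chain $R_m\to R_{m-1}\to\cdots\to R_0$ (immediate, since the level index strictly decreases), and that the polygon- and area-tracking lemmas are invoked only on sets lying in one half-plane $H_{\ell}$ or $H_u$, where $F$ really is linear.
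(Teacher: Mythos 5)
Your proposal follows the paper's own proof essentially verbatim: parts \textbf{a)} and \textbf{b)} are read off from the earlier lemmas on equilibria and $2$-periodic initial data, and part \textbf{c)} is obtained exactly as in the paper via the invariance of $C_{0,0}=\Delta_{\ell}\cup\Delta_u$, the explicit solution of the linear recurrence there, Proposition~\ref{P:inicio} for the level $R_1$, and the descent Lemmas~\ref{LemmaA}--\ref{LemmaF} for the induction on levels, including the separate treatment of the corner square $C_{-m,-m}$ and the degenerate alternative converging to the $2$-periodic orbit through $(0,2)$ and $(2,0)$. The only slip is a harmless bookkeeping one: $R_m=\bigcup_{-m\le i,j\le m}C_{i,j}=[-2m,\,2m+2]^2$, not $[-2m-2,\,2m+2]^2$, which does not affect the argument.
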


\begin{remark}
It should be noticed that in \cite{Botella11}, the authors detect the continuum of $2$-periodic sequences for arbitrary $a$ and $b$ verifying the constraints $a>0$ and either $a+b=1$ or $b-a=1$. In fact, they plot, fixing $b=0.5$, a superposition of twenty attractors for the values $a = 0.1\cdot n$, with $n=1, \ldots, 20$. In particular, for $a=0.5$, they show a segment line representing the attractor. In our work we have proved analytically this property of global attraction. It may be of some interest to study such problem for other values of $a$ and $b$.
\end{remark}

As a consequence of our study on the relationship between the Lozi map and suitable max-type equations (realize that $\delta = 1$, $\alpha + \beta + \gamma \neq 1$ and use Proposition \ref{Prop_deltaneq0}), we deduce the following result.

\begin{corollary}
Any element of the family of difference equations $$x_{n+1} = (x_n \cdot x_{n-1})^{\frac{1}{2}} \cdot \max \left\{ \frac{1}{x_n}, A \right\},$$ defined for any arbitrary positive real initial conditions, with $0<A<1$, presents the following dynamics: there exists a unique equilibrium point, $\bar{x}=1$; there exists a continuum of $2$-periodic solutions constructed from the initial conditions $\left(\ldots, x, \frac{1}{x}, x, \frac{1}{x}, \ldots \right)$, with $x \in \left[A, \frac{1}{A}\right]$; the rest of solutions converge to a $2$-periodic solution.
\end{corollary}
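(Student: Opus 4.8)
The plan is to deduce the corollary entirely from the topological conjugacy between this max-type family and the Lozi map $x_{n+1}=1-\frac12|x_n|+\frac12 x_{n-1}$, whose dynamics is completely described in Theorem~\ref{Th_a=0.5}. First I would identify the parameters: the generalized Lozi map recovered from $x_{n+1}=1-\frac12|x_n|+\frac12 x_{n-1}$ has $\alpha=-\tfrac12$, $\beta=0$, $\gamma=\tfrac12$, $\delta=1$, so $\delta\neq 0$, $\alpha+\beta+\gamma-1=-1\neq 0$, and $\frac{\delta}{\alpha+\beta+\gamma-1}=-1<0$. Hence, by the second bullet of Proposition~\ref{Prop_deltaneq0}, this Lozi equation is topologically conjugate, for every $A\in(0,1)$, to $x_{n+1}=\frac{\max\{x_n^{2\alpha},A\}}{x_n^{\alpha-\beta}\,x_{n-1}^{-\gamma}}=(x_nx_{n-1})^{1/2}\max\{1/x_n,A\}$, which is exactly the family in the statement. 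I would then record the conjugating homeomorphism explicitly, following Remark~\ref{Remark_conjugacion} with $p=1$, $q=-1$: it is induced coordinatewise by $\alpha(u)=\log_A(u^{-1})+1=1-\log_A u$, an increasing homeomorphism $(0,\infty)\to\mathbb{R}$ (increasing because $0<A<1$) with inverse $\alpha^{-1}(w)=A^{1-w}$. Thus a solution $(x_n)$ of the max-equation corresponds to the Lozi solution $(y_n)$ with $y_n=1-\log_A x_n$, equivalently $x_n=A^{1-y_n}$.

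Next I would transport, item by item, the three-part description of Theorem~\ref{Th_a=0.5} through this correspondence. The unique Lozi equilibrium $\bar y=1$ yields the unique max-equilibrium $\bar x=A^{1-1}=1$. A Lozi $2$-periodic solution has the form $(\ldots,y,2-y,y,2-y,\ldots)$ with $0\le y\le 2$; applying $u\mapsto A^{1-u}$ turns it into $(\ldots,x,x',x,x',\ldots)$ with $x=A^{1-y}$ and $x'=A^{1-(2-y)}=A^{y-1}$, so $xx'=A^{0}=1$, i.e. $x'=1/x$. As $y$ runs through $[0,2]$, the value $x=A^{1-y}$ runs monotonically — increasingly, since $0<A<1$ makes $t\mapsto A^{t}$ decreasing while $y\mapsto 1-y$ is decreasing — from $A^{1}=A$ to $A^{-1}=1/A$; hence the $2$-periodic solutions of the max-equation are precisely those built from $(\ldots,x,1/x,x,1/x,\ldots)$ with $x\in[A,1/A]$, as claimed. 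Finally, since a topological conjugacy carries orbits to orbits and preserves their asymptotic behaviour, Part~c) of Theorem~\ref{Th_a=0.5} — every Lozi solution that is neither the equilibrium nor $2$-periodic converges to one of the $2$-periodic solutions — transfers verbatim to the max-equation.

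The argument is essentially a translation of Theorem~\ref{Th_a=0.5}, so I do not expect a deep obstacle; the only points demanding care are (i) checking that the equation produced by the relevant bullet of Proposition~\ref{Prop_deltaneq0} is literally the one in the statement (matching $2\alpha=-1$, $\alpha-\beta=\tfrac12$, $-\gamma=-\tfrac12$, and the arbitrary constant $C=A$), and (ii) getting the monotonicity and signs right in the second step, namely that $y\mapsto A^{1-y}$ maps the parameter interval $[0,2]$ exactly onto $[A,1/A]$ and sends the relation $y+(2-y)=2$ to $x\cdot(1/x)=1$. Once these are verified, the three bullet points of the corollary follow immediately from Theorem~\ref{Th_a=0.5}.
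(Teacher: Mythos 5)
Your proof takes exactly the paper's route: the paper's entire argument for this corollary is to invoke Proposition~3 (with $\delta=1$, $\alpha+\beta+\gamma-1=-1$, hence the second bullet) and then transport Theorem~8 through the conjugacy, which is what you do. The identification of the family and the translation of the three dynamical items are correct. One detail is off, though it does not affect the conclusion: with $0<A<1$ the hypothesis of Lemma~1 requires $\alpha/q<0$, i.e.\ $q>0$ here (since $\alpha=-\tfrac12$), so the admissible choice is $q=1$, giving $y_n=1+\log_A x_n$, i.e.\ $x_n=A^{y_n-1}$. Your choice $q=-1$, $x_n=A^{1-y_n}$ reverses a sign at the step where $\max$ and $\log_A$ are exchanged and actually conjugates the Lozi map onto the \emph{min}-type equation $x_{n+1}=(x_nx_{n-1})^{1/2}\min\{1/x_n,1/A\}$ rather than the stated one (the two are related by the further inversion $x\mapsto 1/x$). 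Since either homeomorphism sends $\bar y=1$ to $\bar x=1$, maps $[0,2]$ onto $[A,1/A]$, and turns $y+(2-y)=2$ into $x\cdot x'=1$, all three assertions of the corollary survive unchanged; also note the minor slip $\alpha-\beta=-\tfrac12$, not $\tfrac12$ (the paper's own text at this point contains the same $q=-1$ versus $\log_A(Ax_n)$ inconsistency).
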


\subsection{Case $a=-\frac{1}{2}$}

We consider the parameter $a=-\frac{1}{2}$ for the Lozi map
\begin{equation} \label{Eq_Lozi_a-0.5}
    x_{n+1} = 1 + \frac{1}{2}|x_n| - \frac{1}{2}x_{n-1}.
\end{equation}

In this case, we are going to prove that the unique equilibrium point of Equation (\ref{Eq_Lozi_a-0.5}), namely $\bar{x}=1$, is in fact a global attractor. The strategy is strongly similar to that developed in the case $a=\frac{1}{2}$, so we will only outline the proof.

The bidimensional map associated to the difference equation is given by $F_{-\frac{1}{2}}(x,y) = \left(y, 1 + \frac{1}{2}|y| - \frac{1}{2}x \right)$, and $DF_{-\frac{1}{2}}(1,1) = \begin{pmatrix} 0 & 1 \\ -\frac{1}{2} & \frac{1}{2} \end{pmatrix}$; since the eigenvalues of this matrix are $\frac{1}{4} \pm \frac{i\sqrt{7}}{4}$, having a modulo less than $1$, we deduce that, at least, the equilibrium point is locally asymptotically stable\footnote{Roughly speaking, we say that an equilibrium point $\bar{x}$ is locally asymptotically stable if $\bar{x}$ is locally stable and if in addition is locally attractor. For a precise definition, consult \cite{Elaydi05}.}.

We maintain the notation employed in the analysis of the case $a=\frac{1}{2}$, and write $C_{m,n} = [2m, 2m+2] \times [2n, 2n+2]$. It is a simple matter to check (see Figure \ref{Diagramafig2}) that the square $C_{0,0}$ is invariant by $F_{-\frac{1}{2}}$; this implies that we move in the upper half-plane $H_u$, and consequently the dynamics of the Lozi map $F_{-\frac{1}{2}}$ is governed by the linear difference equation $x_{n+1} = 1 + \frac{1}{2}x_n - \frac{1}{2}x_{n-1}$. Given two initial conditions $(x_{-1},x_0) = (x,y) \in C_{0,0}$, its corresponding solution is \small$$x_n = \left(\frac{1}{2}\right)^{\frac{n+1}{2}} \cdot \left((x-1)\cos ((n+1)\theta) + \left[\frac{(y-1)\sqrt{2}}{\sin (\theta)} + (1-x)\cot (\theta)\right] \sin ((n+1)\theta) \right) + 1,$$
for $n \geq -1,$ as can be easily checked (here, $\theta=\arctan(\sqrt{7})$). Then, $$ \lim_{n\rightarrow + \infty} x_n = 1 = \bar{x}.$$ \normalsize

According to the last result, in order to prove that $\bar{x}=1$ is a global attractor, it suffices to show that any square $C_{m,n}$ is eventually sent to $C_{0,0}$. For instance, $F_{-\frac{1}{2}}(C_{0,-1}) \subset C_{-1,0}$, and $F^3_{-\frac{1}{2}}(C_{-1,0}) \subset C_{0,0}$. The reasoning is completely analogous to that of case $a=\frac{1}{2}$, it is necessary to see that is always possible to descend from a level $R_m = \bigcup_{-m\leq i,j \leq m} C_{i,j}$ to the precedent level $R_{m-1} = \bigcup_{-m+1\leq i,j \leq m-1} C_{i,j}$ and check that, in fact, the iterates of points of $R_1 = \bigcup_{-1\leq i,j \leq 1} C_{i,j}$ eventually enter in $C_{0,0}$. We leave the details to the care of the reader.

\begin{figure}[ht] 
\includegraphics[scale=0.4]{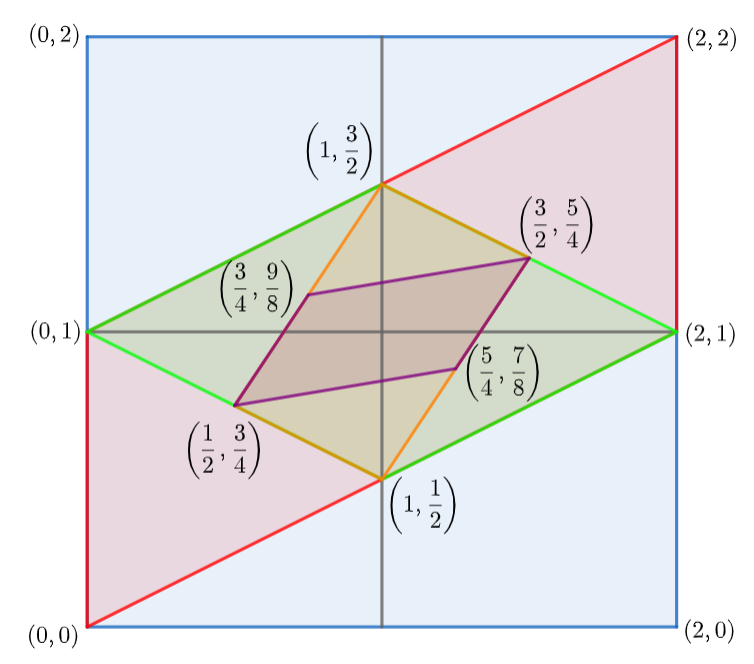}
\caption{Dynamics of Equation (\ref{Eq_Lozi_a-0.5}) in the square $[0,2]\times [0,2]$.}
\label{Diagramafig2}
\end{figure}

\section{Numerical simulations} \label{Sec_simulation}

As a consequence of the transformations developed in Section \ref{Sec_transformation}, the detection of a certain dynamical characteristic or property of a single max equation $x_{n+1} = \frac{\max\{x_n^k, A\} }{x_n^l x_{n-1}^m}$, for a concrete value $A>1$ or $0<A<1$, will guarantee that all the elements of the family, with $A>1$ or $0<A<1$, will share that property. In this sense, we have developed several numerical simulations in order to apply and illustrate the results proved in the previous sections. Moreover, we intend to progress in new open problems related to the topic. 

In concrete, in this section, firstly we will deal with an equation studied by Abu-Saris and Allan in \cite{AbuSarisA97}. They show the existence of a \textit{strange} attractor for a particular value of the parameter that is involved in the equation. Here, we will see that, in fact, the same behaviour appears for the whole uniparametric family of max-equations. It would be of interest to find other families of max-type equations exhibiting a strange attractor. 

Next, as a first step of this search of complicated behaviour, we will focus on the evolution of the origin, $(x_{-1},x_0)=(0,0)$, under Lozi map in the particular case $a=b$, one of the cases that allow us to connect with max-equations. In this regard, we will present some numerical simulations related to the particular case $a=b$ of Lozi map. At first sight, it seems that no complicated behaviours are attained. It should be highlighted that, taking into account the study developed in the previous section, it will also be interesting to analyzed the case $b=a+2$.

\subsection{A generic property}

In \cite{AbuSarisA97}, the authors show the figure of a \textit{strange} attractor when they consider the difference equation
\begin{equation}\label{max_AbuSaris}
    x_{n+1} = \frac{\max\{x_n^2,A\}}{x_n x_{n-1}},
\end{equation}

\noindent in the particular case $A=2.3$. Notice that Equation (\ref{max_AbuSaris}) is topologically conjugate to $$y_{n+1} = |y_n| - y_{n-1} - 1.$$

They also comment that \textit{the solution in this case is also chaotic for certain values of $A$}. In fact, according to our study, and due to the fact that topological conjugation is a transitive relation, we deduce that all the elements of the family, $x_{n+1} = \frac{\max\{x_n^2,B\}}{x_n x_{n-1}}$, with $B>1$, will present the \textit{same} strange attractor, a homeomorphic copy of the attractor detected for $A=2.3$. It is not difficult to reproduce the picture of Abu-Saris and Allan (see Figure \ref{Abu1}), as well as new figures showing us the same dynamics, see Figure \ref{Abu2}.

\begin{figure}[ht] 
\includegraphics[scale=0.5]{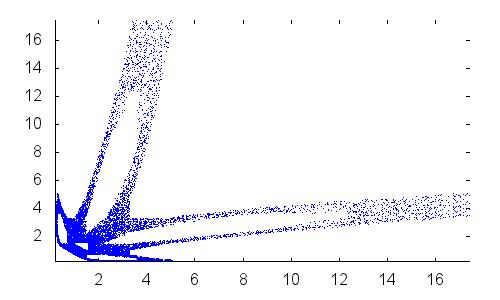}
\caption{Reproduction of Abu-Saris Allan's figure 4 in \cite{AbuSarisA97}.}
\label{Abu1}
\end{figure}

\begin{figure}[ht] 
\includegraphics[scale=0.5]{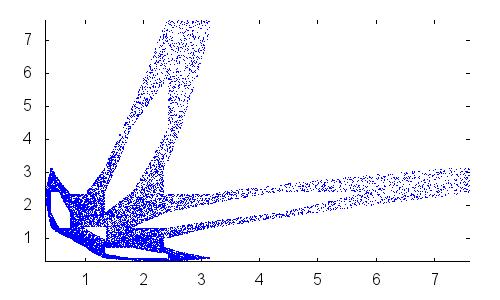}
\caption{Simulation with $A=1.8$ and initial conditions $x_{-1} = x_0 = 1.8^{0.8}$.}
\label{Abu2}
\end{figure}

For $A<1$, in order to do the transformation of a max-equation $x_{n+1} = \frac{\max\{x_n^2,A\}}{x_n x_{n-1}}$ into a generalized Lozi map, it must be $k-l-m-1=-\delta$, so in this case we conclude that it is topologically conjugate to $y_{n+1} = |y_n| - y_{n-1} + 1$. It seems that the same behavior than the one exhibits when $A>1$ takes place. See Figure \ref{Abu3}.

\begin{figure}[ht] 
\includegraphics[scale=0.5]{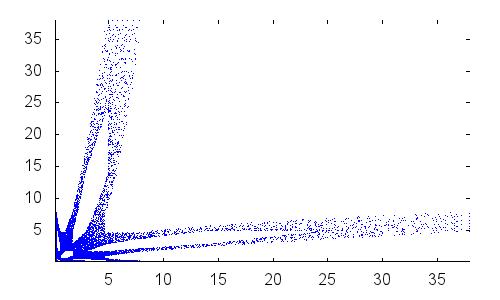}
\caption{Simulation with $A=0.35$ and two random initial conditions $x_{-1}, x_0 \in (0,1)$.}
\label{Abu3}
\end{figure}

In view of Figures~\ref{Abu2} and~\ref{Abu3}, a natural question arises, to know if the systems are topologically conjugate.

\subsection{Case $a=b$}

Now, we will simulate the evolution of the origin, $(x_{-1},x_0)=(0,0)$, for different values of the parameters $a$ and $b$ in the particular case $a=b$, in order to deduce some consequences for the associated family of max-types equations. Moreover, we will focus on the unknown cases $a\notin \left[-\frac{1}{2}, \frac{1}{2}\right]$. In the sequel,  we assume $(x_{-1},x_0)=(0,0)$.

Firstly, the case $a=b=-1$ yields to a $6$-periodic orbit. When $a=b<-1$, it seems that the orbit of $(0,0)$ tends to infinity in a spiral movement as Figure \ref{Fig2}  shows. It would be interesting to prove if any initial conditions verify this class of dynamics.

\begin{figure}[ht] 
\includegraphics[scale=0.5]{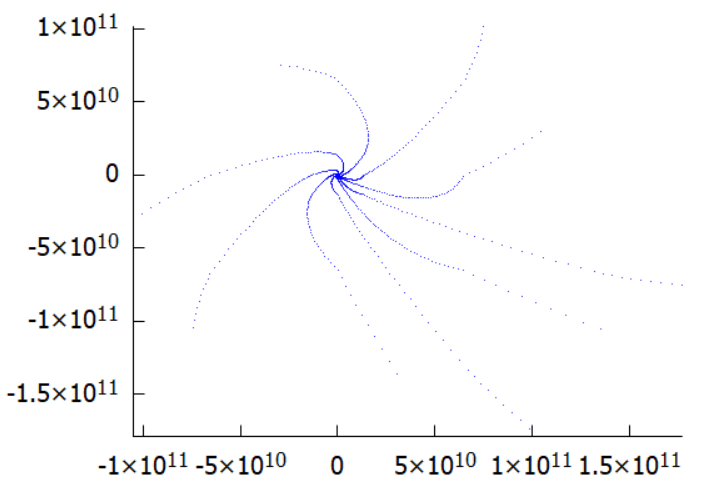}
\caption{Case $a=b=-1.01$. A repulsive orbit. }
\label{Fig2}
\end{figure}

Next, when $a\in \left(-1, -\frac{1}{2}\right)$, it seem that the orbit of $(0,0)$ is always trapped by an equilibrium point, which is different for distinct values of $a$. In this sense, it would be of interest to prove analytically whether or not the equilibrium point is a global attractor. Probably, a similar technique to that developed in Section \ref{S:caso_a=b} could be successful.

\begin{figure}[ht] 
\includegraphics[scale=0.6]{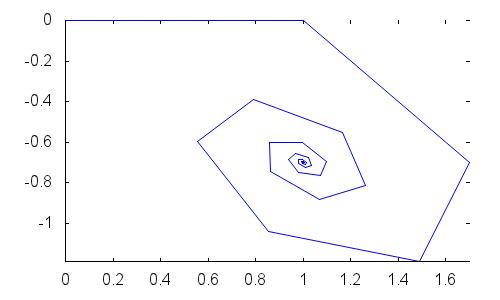}
\caption{Case $a=b=-0.7$. The orbit of $(0,0)$ goes to an equilibrium point.}
\label{Fig3}
\end{figure}

However, for values $a>\frac{1}{2}$, the situation changes drastically. Now, we have two unstable equilibrium points, namely $\bar{x}=1$ and $\bar{x}=\frac{1}{1-2a}$, see Lemma~\ref{L:points}. Furthermore, it seems that there exists a $2$-periodic orbit which is a local attractor that attracts the origin. We illustrate in Figures \ref{Fig6} and \ref{Fig7} the cases $a=b=0.99$ and $a=b=0.9998$.

\begin{figure}[ht] 
\includegraphics[scale=0.6]{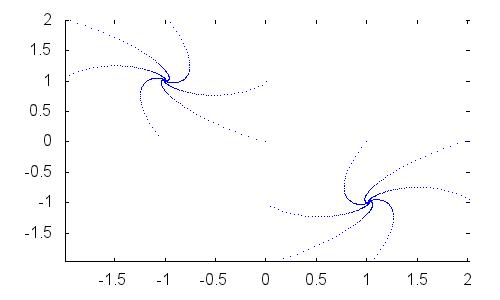}
\caption{Case $a=b=0.99$. The orbit of $(0,0)$ goes to a $2$-periodic orbit.}
\label{Fig6}
\end{figure}

\begin{figure}[ht] 
\includegraphics[scale=0.7]{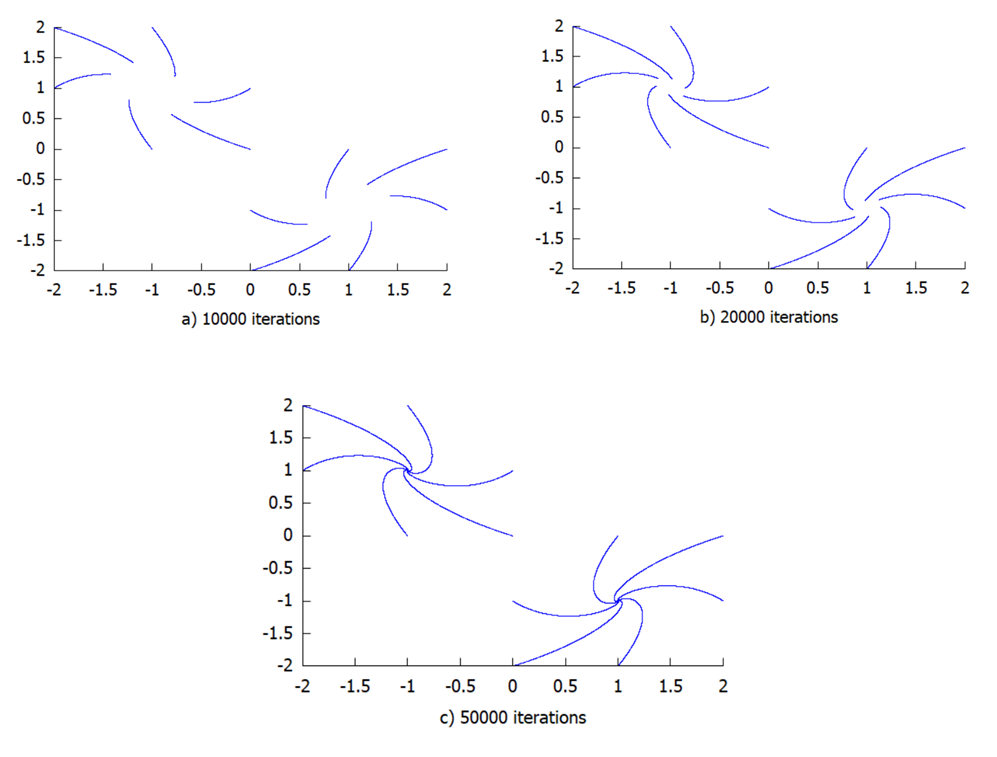}
\caption{Case $a=b=0.9998$.}
\label{Fig7}
\end{figure}

Finally, if $a=b=1$, the orbit of $(0,0)$ is $12$-periodic. On the other hand, when $a=b>1$, the orbit remains for some time near to the origin and finally goes to    infinity by the third quadrant. Also, when we increase the value of $a$, the exit from the neighborhood of $(0,0)$ is more rapid.

\begin{figure}[ht] 
\includegraphics[scale=0.7]{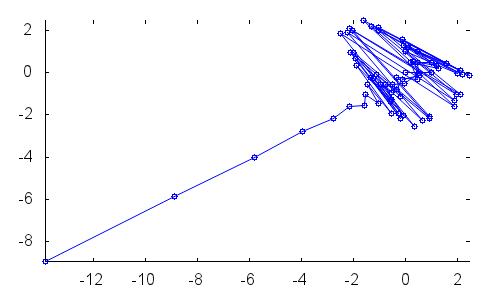}
\caption{Case $a=b=1.01$.}
\label{Fig9}
\end{figure}

It is worth mentioning that, although it seems in Figure \ref{Fig9} that the orbit accumulates in twelve arcs, in fact it tends to a $2$-periodic attractor. This is due to the fact that the eigenvalues associated to the linearization of $F^2$ at the periodic points are complex and their module is exactly $a = 0.998$, which is near $1$. This yields to a spiral convergent movement that evolves very slowly to the $2$-periodic attractor. 

\begin{figure}[ht] 
\includegraphics[scale=0.7]{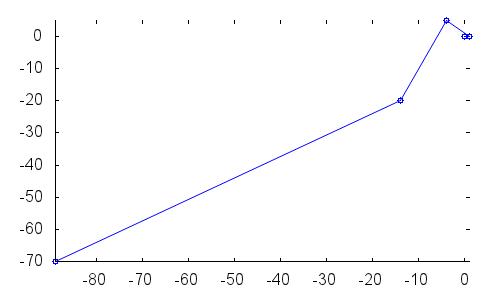}
\caption{Case $a=b=5$.}
\label{Fig10}
\end{figure}

\section{Conclusions} \label{Sec_conclusions}

The connection established between the Lozi map and difference equations of max-type allow us to extrapolate the dynamics of a single equation into a whole one-dimensional family constructed by the change of variables presented in Section \ref{Sec_transformation}. This broaden the scope of the treatment of dynamical aspects for max-type equations, usually restricted to properties on periodicity, boundedness,... In this sense, we can think in max-type equations as generators of complex dynamics (attractors, omega-limit sets,...), so their study will be interesting in the future. To this regard, it should be emphasized that the Lozi map has applications in a variety of fields, such as control theory, game theory or synchronization theory, among others (see \cite{Elhadj14}), which can be translated automatically to max equations. 

Moreover, we would like to stress that in order to consider the study of the dynamics of max-type equations we could take advantage of some techniques of differential equations or discrete dynamics, apart from the usual techniques employed in the literature that in most cases are strongly linked to arguments of real analysis. For more information related to max-type equations, see \cite{LineroN22}, a survey where the authors collect a large information on max-type difference equations and their known dynamics as well as the  techniques used in that research.

Also, we would like to highlight some open problems related to the topic treated in this paper. On the one hand, to prove analytically the dynamics of the Lozi map in the case $a=b$ when $a\notin \left[\frac{1}{2},-\frac{1}{2}\right]$, in order to see if the behavior insinuated in the numerical simulations presented in Section \ref{Sec_simulation} are true. On the other hand, to study in detail the other case deduced from the relation between the generalized Lozi map and max-type equations, namely, the case $b - a= 2$. Even, we could propose to deepen in the knowledge of the dynamics of generalized Lozi maps given by Equation~\ref{Eq_gen_Lozi} when $\delta=0$ and $\alpha+\beta+\gamma=1$.

Finally, it is worth mentioning that, nowadays, Lozi map is still a source of inspirations for research in different fields and, particularly, in the area of difference equations. 

\section*{Acknowledgements}

We would like to thank the referees for their useful comments and suggestions. This work has been supported by the grant MTM2017-84079-P funded by MCIN/AEI/10.13039/501100011033 and by ERDF ``A way of making Europe'', by the European Union.

\end{document}